\DeclareMathOperator{\Spec}{Spec}
\DeclareMathOperator{\Hom}{Hom}
\DeclareMathOperator{\End}{End}
\DeclareMathOperator{\GL}{GL}
\DeclareMathOperator{\Gr}{Gr} 
\DeclareMathOperator{\Ker}{Ker}
\DeclareMathOperator{\Img}{Im}
\DeclareMathOperator{\Aut}{Aut}
\DeclareMathOperator{\Inn}{Inn}
\DeclareMathOperator{\Out}{Out}
\DeclareMathOperator{\Gal}{Gal}
\DeclareMathOperator{\Ann}{Ann}
\newcommand{\mbC}{\mathbb{C}}
\newcommand{\mbQ}{\mathbb{Q}}
\newcommand{\mfg}{\mathfrak{g}}
\newcommand{\p}{\mathfrak{p}}
\newcommand{\et}{\mathrm{\acute{e}t}} 
\newcommand{\ab}{\mathrm{ab}} 
\numberwithin{equation}{subsection}
\definecolor{e-mail}{rgb}{0,.40,.80}
\definecolor{reference}{rgb}{.20,.60,.22}
\definecolor{mrnumber}{rgb}{.80,.40,0}
\definecolor{citation}{rgb}{0,.40,.80}
\newtheorem{mainthm}{Theorem}
\theoremstyle{plain}
\newtheorem{theorem}{Theorem}[section]
\newtheorem{lemma}[theorem]{Lemma}
\newtheorem{proposition}[theorem]{Proposition}
\newtheorem{corollary}[theorem]{Corollary}
\newtheorem{conjecture}[theorem]{Conjecture}
\theoremstyle{definition} 
\newtheorem{definition}[theorem]{Definition}
\newtheorem{example}[theorem]{Example}
\newtheorem{remark}{Remark}[theorem]
\title{On the kernels of the pro-$p$ outer Galois representations associated to once-punctured CM elliptic curves}
\author[S.~Ishii]{Shun Ishii}
\address{Department of Mathematics, Keio University, 3-14-1 Hiyoshi, Kouhoku-ku, Yokohama 223-8522, Japan.}
\email{ishii.shun@keio.jp}
\date{\today}
\keywords{once-punctured elliptic curve, complex multiplication, \'etale fundamental group, Galois representation}
\subjclass{11G05, 11G15, 11R32, 12E30}
\begin{document}
\begin{abstract}
We prove that a certain field naturally arising from the kernel of the pro-$p$ outer Galois representation of a once-punctured elliptic curve with complex multiplication over an imaginary quadratic field $K$ coincides with the maximal pro-$p$ Galois extension of the mod-$p$ ray class field $K(p)$ of $K$ unramified outside $p$ under suitable assumptions. This result gives a geometric way to construct a large pro-$p$ extension of $K(p)$ with restricted ramification, and may be regarded as an analogue of Sharifi's result in the case of the projective line minus three points.
\end{abstract}

\maketitle

\setcounter{tocdepth}{1}
\tableofcontents

\section{Introduction}\label{1}

In this paper, we study the pro-$p$ outer Galois representations associated to once-punctured elliptic curves with complex multiplication, especially their kernels. Let us briefly recall the definition of the pro-$p$ outer Galois representations: Suppose that $X$ is a geometrically connected algebraic variety defined over a number field $F$. Throughout the paper, we work with a fixed algebraic closure $\bar{\mathbb{Q}}$ of $\mathbb{Q}$, and every number field such as $F$ is considered to be a subfield of $\bar{\mathbb{Q}}$.

We denote the \'etale fundamental group of $X$ with respect to a (possibly tangential) basepoint $\bar{x}$ by $\pi_{1}(X, \bar{x})$. In the following, we write $\bar{X} \coloneqq X \times_{\Spec(F)} \Spec(\bar{\mathbb{Q}})$. There is an exact sequence determined by the structure morphism $X \to \Spec(F)$, called the \emph{\'etale homotopy exact sequence}:
\[
    1 \to \pi_{1}(\bar{X}, \bar{x}) \to \pi_{1}(X, \bar{x}) \to G_{F} \coloneqq \Gal(\bar{\mathbb{Q}}/F) \to 1.  
\]
This exact sequence, together with the conjugation action of $\pi_{1}(X, \bar{x})$ on $\pi_{1}(\bar{X}, \bar{x})$, determines the following \emph{outer Galois representation} 
\[
\rho_{X} \colon G_{F} \to \Out(\pi_{1}(\bar{X}, \bar{x})) \coloneqq \Aut(\pi_{1}(\bar{X}, \bar{x}))/\Inn(\pi_{1}(\bar{X}, \bar{x})),
\] which does not depend on the choice of basepoints. For a prime $p$, the outer representation $\rho_{X}$ induces a homomorphism
\[
    \rho_{X, p} \colon G_{F} \to \Out(\pi_{1}(\bar{X}, \bar{x})^{(p)}),
\]
 which we call the \emph{pro-$p$ outer Galois representation} associated to $X$. Roughly speaking, since $\pi_{1}(\bar{X}, \bar{x})^{(p)}$ is the fundamental group of the Galois category of geometrically $p$-coverings of $\bar{X}$, the outer representation $\rho_{X,p}$ expresses how $G_{F}$ acts on such a category or such coverings via pullbacks.

If $X$ is a hyperbolic curve, these outer representations are studied in the context of anabelian geometry. Among other things, it is known that $\rho_{X}$ is injective if $X$ is a hyperbolic curve, cf. Matsumoto \cite{Ma} when $X$ is affine and Hoshi-Mochizuki \cite{HoM} when $X$ is proper. Regarding the pro-$p$ outer Galois representation, it is far from being injective since the group $\Out(\pi_{1}(\bar{X}, \bar{x})^{(p)})$ contains an open pro-$p$ subgroup. In particular, the fixed field of the kernel of $\rho_{X,p}$ is an almost pro-$p$ extension over $F$, and it seems to be interesting to study arithmetic properties of this fixed field.

 In the case of the thrice-punctured projective line $\mathbb{P}^{1}_{\mathbb{Q}} \setminus \{ 0,1,\infty\}$, Anderson and Ihara  \cite[Theorem 2 (I\hspace{-.1em}V)]{AI} proved that the fixed field of the kernel of the pro-$p$ outer Galois representation is a nonabelian infinite pro-$p$ extension over the field $\mathbb{Q}(\mu_{p^{\infty}})$ unramified outside $p$, and is generated by certain $p$-units, which they call \emph{higher circular $p$-units}, constructed from cusps of geometrically $p$-coverings of $\mathbb{P}^{1}_{\bar{\mathbb{Q}}} \setminus \{ 0,1,\infty\}$ of genus zero. Moreover, they posed the following question \cite[page 272, (a)]{AI}:

 \begin{center}
    \emph{Is $\bar{\mathbb{Q}}^{\Ker(\rho_{\mathbb{P}^{1}_{\mathbb{Q}} \setminus \{ 0,1,\infty\}, p})}$ the maximal pro-$p$ extension of $\mathbb{Q}(\mu_{p})$ unramified outside $p$?}
 \end{center}

Let $\Omega^{\mathrm{cyc}}$ denote the maximal pro-$p$ extension of the $p$-th cyclotomic field $\mathbb{Q}(\mu_{p})$ unramified outside $p$. Regarding Anderson-Ihara's question, Sharifi \cite{Sh} has established the following affirmative result for odd regular primes, assuming a certain conjecture which is nowadays referred to as \emph{the Deligne-Ihara conjecture} (it was called \emph{Deligne's conjecture} in \cite{Sh}):

\begin{theorem}[Sharifi {\cite[Theorem 1.1]{Sh}}]
Let $p$ be an odd regular prime and suppose the Deligne-Ihara conjecture holds for $p$. Then the fixed field of the pro-$p$ outer Galois representation associated to the projective line minus three points coincides with $\Omega^{\mathrm{cyc}}$.
\end{theorem}

The Deligne-Ihara conjecture concerns with the structure of a graded Lie algebra over $\mathbb{Q}_{p}$ associated to a certain descending central filtration on $G_{\mathbb{Q}}$ induced by the pro-$p$ outer Galois representation, and states that the graded Lie algebra is freely generated by Soul\'e elements $\{ \sigma_{m} \}_{m \geq 3, \mathrm{odd}}$ in each odd degree $\geq 3$. We do not go into further details here (see Sharifi \cite{Sh} and Ihara \cite{Ih6} for detailed discussions), but we will formulate an analogue of the Deligne-Ihara conjecture for once-punctured elliptic curves with complex multiplication in the next section (Conjecture \ref{cnj:DI11}).

The Deligne-Ihara conjecture is now a theorem: The generation portion of the conjecture is proved by Hain-Matsumoto \cite{HM2}, and later Brown \cite{Br} proved the conjecture in full generality. Anderson-Ihara's question is hence affirmative whenever $p$ is odd and regular. 

\begin{remark}\label{rmk:previous}
Let $p$ be an odd prime. The Galois group $\Gal(\Omega^{\mathrm{cyc}}/\mathbb{Q}(\mu_{p}))$ is generated by $(p+1)/2$ elements if and only if $p$ is regular by \cite[(10.7.13) Theorem]{NSW}. This observation plays a central role in the proof of \cite[Theorem 1.1]{Sh} Moreover, if $p$ is odd and regular, then $\Gal(\Omega^{\mathrm{cyc}}/\mathbb{Q}(\mu_{p}))$ is even a free pro-$p$ group of rank $(p+1)/2$ and the $p$-ramified Iwasawa module $\Gal(\Omega^{\mathrm{cyc}}/\mathbb{Q}(\mu_{p^{\infty}}))^{\ab}$ is generated by $\frac{p-1}{2}$ elements by \emph{loc. cit.}. One can even show that the $p$-ramified Iwasawa module is free (of rank $\frac{p-1}{2}$) by using \cite[Corollaire 2.7]{NQD}. 
\end{remark}

The aim of the present paper is to establish an analogue of Sharifi's result in the case of once-punctured elliptic curves with complex multiplication defined over imaginary quadratic fields of class number one. 

In the following, let $K$ be an imaginary quadratic field of class number one, and $(E,O)$ an elliptic curve over $K$ with the origin $O \in E(K)$ having complex multiplication by the ring of integers $O_{K}$. We assume that the complex multiplication by $O_{K}$ is defined over $K$, i.e. $O_{K} \hookrightarrow \End_{K}(E)$. For a prime $p$ and a nonnegative integer $n$, we write $K(p^{n})$ for the ray class field of $K$ of conductor $p^{n}$, and set $K(p^{\infty}) \coloneqq \cup_{n \geq 0} K(p^{n})$. Then $K(p^{\infty})/K(p)$ is a $\mathbb{Z}_{p}^{2}$-extension.

Our main result characterizes the kernel of the pro-$p$ outer Galois representation associated to the complement $X \coloneqq E-O$ of the origin of the elliptic curve, which we call the \emph{once-punctured elliptic curve} associated to $E$.

\begin{mainthm}[=Theorem \ref{thm:main}]\label{thm:A}
Let $p \geq 5$ be a prime which splits in $K$. Assume that
\begin{enumerate}
\item[(1)] the class number of $K(p)$ is not divisible by $p$,
\item[(2)] there are only two primes of $K(p^{2})$ lying above $p$, and
\item[(3)] an analogue of the Deligne-Ihara conjecture (Conjecture \ref{cnj:DI11}) holds. 
\end{enumerate}
Then the fixed field of the kernel of the pro-$p$ outer Galois representation of $X=E-O$ coincides with the compositum of $K(E[p])$ and the maximal pro-$p$ extension of $K(p)$ unramified outside $p$.
\end{mainthm}

In the following, we write $\Omega$ for the maximal pro-$p$ extension of $K(p)$ unramified outside $p$. To the author's knowledge, though Theorem \ref{thm:A} critically depends on our working Conjecture \ref{cnj:DI11}, the pair $(X,p)$ as above gives a first example where the fixed field is explicitly determined independently of the case of $(\mathbb{P}^{1}_{\mathbb{Q}} \setminus \{ 0,1,\infty\}, p)$. Note that one can construct many hyperbolic curves of higher genera whose kernels of pro-$p$ outer Galois representations are the same as that of the projective line minus three points by using ``Oda's prediction'' \cite[Theorem 3.6]{Ta12} (or \cite[Theorem C (i)]{HoM}) and \cite[Lemma 28]{Ho12}. The Fermat curve of degree $p$ provides one such example \cite[Example 3.4 (2)]{Ho}.

\begin{remark}\label{rmk:main}
  Let $p \geq 5$ be a prime which splits into distinct primes in $K$. 
\begin{enumerate}
    \item The first and second assumptions of Theorem \ref{thm:A}, together with \cite[(10.7.13) Theorem]{NSW}, imply that the pro-$p$ group $\Gal(\Omega/K(p))$ is generated by $[K(p):K]+2$ elements satisfying a single relation.
    \item The first and second assumptions of Theorem \ref{thm:A} also allow us to determine the $\Lambda$-module structure of the $p$-ramified Iwasawa module $\Gal(\Omega/K(p^{\infty}))^{\ab}$ completely, where \[
    \Lambda \coloneqq \mathbb{Z}_{p}[[\Gal(K(p^{\infty})/K(p))]]
    \] is non-canonically isomorphic to the two-variable power series ring over $\mathbb{Z}_{p}$. More precisely, under these two assumptions, we have 
    \[
    \Gal(\Omega/K(p^{\infty}))^{\ab} \cong \Lambda^{[K(p):K]-1} \oplus \Ann_{\Lambda}(\mathbb{Z}_{p}(1)),
    \] where $\Ann_{\Lambda}(\mathbb{Z}_{p}(1))$ denotes the annihilator ideal of $\mathbb{Z}_{p}(1)$, cf. Proposition \ref{prp:Iwasawa}.
    \item We do not know whether there are infinitely many primes satisfying the first assumption of Theorem \ref{thm:A}, simply because the assumption implies that $p$ is regular. We also do not know whether the second assumption is satisfied by infinitely many primes. There are 39175 primes which are less than $10^{6}$ and split in $\mathbb{Q}(\sqrt{-1})$, and among them, 13705 primes satisfy the second assumption.  
    \item It is desirable to generalize Theorem \ref{thm:A} to more general imaginary quadratic fields. One might try to replace $X$ with a once-punctured CM elliptic curve over the Hilbert class field of the concerned imaginary quadratic field. 
\end{enumerate}
\end{remark}

\begin{example}\label{ex:main}
   We have checked whether a small prime $p$ satisfies the first two assumptions of Theorem \ref{thm:A} by using SageMath \cite{Sage}. For example, when $p=5$, both assumptions are satisfied if and only if $K=\mathbb{Q}(\sqrt{-1})$. Similarly, when $p=7$, both conditions are satisfied only for $K=\mathbb{Q}(\sqrt{-3})$ or $\mathbb{Q}(\sqrt{-19})$. When $p=13$ or $17$, we have also checked that both assumptions are satisfied for $K=\mathbb{Q}(\sqrt{-1})$ under the Generalized Riemann Hypothesis (GRH).  
\end{example}

Our strategy to prove Theorem \ref{thm:A} is to generalize Sharifi's technique developed in \cite{Sh}. That is, we construct certain elements $\{ \sigma_{\boldsymbol{m}} \}_{\boldsymbol{m} \in I}$ that satisfy the assumption of Theorem \ref{thm:A} in such a group-theoretic way that they generate $\Gal(\Omega/K(p^{\infty}))$. Then Conjecture \ref{cnj:DI11} allows us to conclude that the surjective homomorphism $\Gal(\Omega/K(p^{\infty})) \to \Gal(\Omega^{\ast}/K(p^{\infty}))$ is an isomorphism.

However, to construct such elements, we need to introduce a new ingredient. Namely, we define certain two-variable filtrations on the pro-$p$ geometric fundamental group $\Pi_{1,1}$ of $X$ and on the absolute Galois group $G_{K}$. We introduce them and establish their fundamental properties in Section \ref{3}. 

As was mentioned in Remark \ref{rmk:main} (1), in our situation, generators of the Galois group $\Gal(\Omega/K(p))$ satisfy a nontrivial relation. This is one of the significant differences from the previous result on the thrice-punctured projective line (cf. Remark \ref{rmk:previous}), making it difficult to extend Sharifi's approach to the case of once-punctured elliptic curves. We overcome this difficulty by choosing elements $\{ \sigma_{\boldsymbol{m}} \}_{\boldsymbol{m} \in I}$ carefully in Section \ref{4}. Once we obtain $\{ \sigma_{\boldsymbol{m}} \}_{\boldsymbol{m} \in I}$, the rest of the proof follows in the same way as the previous result.

This paper proceeds as follows: In Section \ref{2}, we explain previous studies on the pro-$p$ outer Galois representations associated to once-punctured elliptic curves. Among other result, we review the construction of a certain power series obtained from the Galois action on the metabelian fundamental group and state Nakamura's explicit formula for that power series. We also propose an analogue of the Deligne-Ihara conjecture and state the main result in a precise manner in that section. In Section \ref{3}, we introduce a two-variable version of the descending central series on various profinite groups and establish their fundamental properties. Section \ref{4} is devoted to the proof of Theorem \ref{thm:A}, and in the appendix, we obtain a certain sufficient condition for the so-called \emph{pure locality} for $K(\mu_{p})$, which is needed to establish the finiteness of certain Galois cohomology groups.

\medskip

{\bf Acknowledgement.} The present paper is based on a part of the author's doctoral thesis submitted to Kyoto University. The author sincerely thanks his advisor \emph{Akio Tamagawa} for helpful comments and warm encouragement, and \emph{Benjamin Collas} for carefully reading drafts of the thesis. He also sincerely appreciates the anonymous referees for their constructive comments, which have greatly contributed to improving the content of the initial manuscript of the paper. The author was supported by JSPS KAKENHI Grant Number JP21J1191 when preparing his thesis, and by JP23KJ1882 throughout the writing of the the present paper.

\section*{Notational conventions}

\subsection*{Indexes}

Let $\boldsymbol{m}=(m_{1}, m_{2})$ be a pair of integers and $w$ a positive integer. We write
\begin{itemize}
    \item $\boldsymbol{1}$ for the pair $(1,1)$, 
    \item $\lvert \boldsymbol{m} \rvert \coloneqq m_{1}+m_{2}$,
    \item $\boldsymbol{m} \equiv 0 \bmod w$ if both $m_{1}$ and $m_{2}$ are divisible by $w$.
\end{itemize}
Suppose  $\boldsymbol{n}=(n_{1}, n_{2})$ is another pair of integers. We write 
\begin{itemize}
    \item $\boldsymbol{m} \geq \boldsymbol{n}$ if $m_{i} \geq n_{i}$ for $i = 1, 2 $,
    \item $\boldsymbol{m} >  \boldsymbol{n}$ if $\boldsymbol{m} \geq \boldsymbol{n}$ and $\lvert \boldsymbol{m} \rvert > \lvert \boldsymbol{n} \rvert$.
\end{itemize}

\subsection*{Profinite Groups} 

Let $G$ be a profinite group, and $H, K$ two closed subgroups of $G$. We write
\begin{itemize}
    \item $[H,K]$ for the closure of the commutator subgroup of $H$ and $K$,
    \item $\{ G(m) \}_{m \geq 1}$ for the descending central series defined by
    \[
    G(1) \coloneqq G \quad \text{and} \quad
    G(m) \coloneqq \langle [G(m'), G(m'')] \mid m'+m''=m \rangle  \quad (m \geq 2),
    \]
    \item $G^{\ab}$ for the maximal abelian quotient $G/G(2)$,
    \item  $G^{\rm{mab}}$ for the maximal metabelian quotient $G/[G(2),G(2)]$ of $G$. 
    \item $G^{(p)}$ for the maximal pro-$p$ quotient of $G$ (for a prime $p$),
    \item $\Aut(G)$ for the (continuous) automorphism group of $G$,
    \item $\Inn(G)$ for the inner automorphism group of $G$,
    \item $\Out(G)$ for the outer automorphism group $\Aut(G)/\Inn(G)$ of $G$.
\end{itemize}

Let $S$ be a subset of $G$. We write
\begin{itemize}
    \item  $\langle S \rangle$ for the minimal closed subgroup of $G$ that contains $S$, 
    \item $\langle S \rangle_{{\rm normal}}$ for the minimal normal closed subgroup of $G$ that contains $S$.
\end{itemize}

Moreover, we say
\begin{itemize}
    \item $S$ generates $G$ if $\langle S \rangle$ coincides with $G$,
    \item $S$ \emph{strongly} generates $G$ if $S$ generates $G$ and converges to $1$ (cf. Ribes-Zalesskii \cite[2.4]{RZ}),
    \item $S$ \emph{normally} generates $G$ if $\langle S \rangle_{{\rm normal}}$ coincides with $G$.
\end{itemize}

Free pro-$p$ groups of countably infinite rank often appear in the paper. We refer to Ribes-Zalesskii \cite[Lemma 3.3.4]{RZ} for the characterization of free pro-$p$ groups on sets converging to $1$. 

\subsection*{Number Fields}

We work with a fixed algebraic closure $\bar{\mathbb{Q}}$ of $\mathbb{Q}$ and a fixed embedding from $\bar{\mathbb{Q}}$ into $\mathbb{C}$. Every number field is considered to be a subfield of $\bar{\mathbb{Q}}$, and hence of $\mbC$. For an integer $m \geq 1$, we denote the group of $m$-th roots of unity in $\bar{\mathbb{Q}}$ by $\mu_{m}$. Let $F$ be a subfield of $\bar{\mbQ}$ and $v$ a finite place of $F$. We write
\begin{itemize}
    \item $O_{F}$ for the integer ring of $F$,
    \item $F_{v}$ for the $v$-adic completion of $F$,
    \item $G_{F}$ for the absolute Galois group  $\Gal(\bar{\mathbb{Q}}/F)$ of $F$.
\end{itemize}

Let $K$ be an imaginary quadratic field and $\mathfrak{m}$ a nonzero integral ideal of $K$. We write
\begin{itemize}
    \item  $K(\mathfrak{m})$ for the ray class field of $K$ of modulo $\mathfrak{m}$. If $\alpha$ generates $\mathfrak{m}$, then we write it as $K(\alpha)$, 
    \item $K(\mathfrak{m}^{\infty}) \coloneqq \cup_{n \geq 1} K(\mathfrak{m}^{n})$.
\end{itemize}
 
\subsection*{Elliptic Curves with Complex Multiplication}(cf. Silverman \cite[Chapter II]{Si})

Let $K$ be an imaginary quadratic field of class number one, and $E$ an elliptic curve over $K$ having complex multiplication by $O_{K}$. For an ideal $\mathfrak{m}$ of $O_{K}$, the $G_{K}$-action on the $\mathfrak{m}$-torsion subgroup $E[\mathfrak{m}]$ determines an injective homomorphism
\[
\Gal(K(E[\mathfrak{m}])/K) \hookrightarrow \Aut(E[\mathfrak{m}](\bar{\mathbb{Q}})) \cong (O_{K}/\mathfrak{m})^{\times}.
\] Moreover, it induces an isomorphism
\[
\Gal(K(\mathfrak{m})/K) \xrightarrow{\sim} (O_{K}/\mathfrak{m})^{\times}/\Img(O_{K}^{\times}),
\] which does not depend on the choice of $E$. For a prime $p$, we denote the $p$-adic Tate module of $E$ by $T_{p}(E)$.  If $p$ splits into two distinct primes in $K$ as $(p)=\p \bar{\p}$, let $T_{\p}(E)$ (resp. $T_{\bar{\p}}(E)$) denote the inverse limit $\varprojlim_{n} E[\p^{n}](\bar{\mathbb{Q}})$ (resp. $\varprojlim_{n} E[\bar{\p}^{n}](\bar{\mathbb{Q}})$) whose transition maps are taken to be multiplication by $p$. They determine two characters
\[
\chi_{1} \colon G_{K} \to \Aut(T_{\p}(E)) \cong \mathbb{Z}_{p}^{\times} \quad \text{and} \quad
\chi_{2} \colon G_{K} \to \Aut(T_{\bar{\p}}(E)) \cong \mathbb{Z}_{p}^{\times}.
\]  Note that $\chi_{1}\chi_{2}$ coincides with the $p$-adic cyclotomic character $\chi_{\mathrm{cyc}}$. Let $\boldsymbol{m}=(m_{1}, m_{2})$ be a pair of integers. We define the character $\chi^{\boldsymbol{m}}$ by
\[
\chi^{\boldsymbol{m}} \coloneqq \chi_{1}^{m_{1}}\chi_{2}^{m_{2}} \colon G_{K} \to \mathbb{Z}_{p}^{\times}.
\] Note that it factors through $\Gal(K(p^{\infty})/K)$ if $m_{1} \equiv m_{2} \bmod \lvert O_{K}^{\times} \rvert$.

Let $M$ be a $\mathbb{Z}_{p}$-module on which $G_{K}$ acts continuously. we denote the $\chi^{\boldsymbol{m}}$-twist of $M$ by $M(\boldsymbol{m})$. Note that the $\p$-adic (resp. $\bar{\p}$-adic) Tate module $T_{\p}(E)$ is isomorphic to $\mathbb{Z}_{p}(1,0)$ (resp. $\mathbb{Z}_{p}(0,1)$), and $\mathbb{Z}_{p}(m,m)$ is simply the $m$-th Tate twist $\mathbb{Z}_{p}(m)$.

\section{Preliminaries}\label{2}

In this section, we prepare backgrounds which are necessary to state the main result of this paper (Theorem \ref{thm:main}). We explain:

\begin{itemize}
\item a certain basis $\{x_{1}, x_{2} \}$ of the pro-$p$ geometric fundamental group of once-punctured elliptic curve $X$ (Lemma \ref{lmm:basis}),
\item an analogue of the Deligne-Ihara conjecture (Conjecture \ref{cnj:DI11}),
\item an elliptic analogue of the universal power series for Jacobi sums \cite{Ih1},  the explicit formula for that power series \cite{Na2}, conditional nonvanishing of certain Kummer characters associated to the power series \cite{IS1/2} and a reformulation of Conjecture \ref{cnj:DI11} in terms of these characters (Proposition \ref{prp:DI11_Soule}),
\item the main result (Theorem \ref{thm:main}) in a precise manner.
\end{itemize}

In the rest of the present paper, let $K$ be an imaginary quadratic field of class number one, and $p \geq 5$ a prime which splits in $O_{K}$ as $(p)=\p\bar{\p}$. Moreover, let $E$ be an elliptic curve defined over $K$ having complex multiplication by $O_{K}$ with a fixed Weierstrass form of $y^{2}=4x^{3}-g_{2}x-g_{3}$ with $g_{2}, g_{3} \in K$. Let $L$ denote the period lattice corresponding to this form.

We denote the once-punctured elliptic curve associated to $E$ by $X \coloneqq E - O$, where $O$ is the origin of $E$. We denote the pro-$p$ geometric fundamental group $\pi_{1}(\bar{X}, \vec{O})^{(p)}$ with respect to the Weierstrass tangential basepoint at $O$ \cite[(2.7) Case 2]{Na3} by $\Pi_{1,1}$. Note that $\Pi_{1,1}$ is a free pro-$p$ group of rank two, and we have an injective homomorphism $\mathbb{Z}_{p}(1) \subset \Pi_{1,1}$ associated with the tangential basepoint $\vec{O}$. Moreover, the pro-$p$ geometric fundamental group of $E$ is isomorphic to $\Pi_{1,1}^{\rm{ab}}$ through a natural homomorphism induced by the inclusion $X \subset E$, and there is a natural isomorphism $T_{p}(E) \xrightarrow \sim \Pi_{1,0}$. Throughout this paper, we work with the following basis $\{ x_{1}, x_{2} \}$ of $\Pi_{1,1}$, which is useful when considering the outer Galois action on $\Pi_{1,1}$:

\begin{lemma}\label{lmm:basis}
There exists a basis $\{x_{1}, x_{2} \}$ of $\Pi_{1,1}$ satisfying the following conditions:
\begin{enumerate}
\item[(1)] Let $(\omega_{1,n})_{n \geq 1}$ (resp. $(\omega_{2,n})_{n \geq 1}$) denote the image of $x_{1}$ (resp. $x_{2}$) in $\Pi_{1,1}^{\ab} \cong T_{p}(E)= T_{\p}(E) \oplus T_{\bar{\p}}(E)$. Then $(\omega_{1,n})_{n \geq 1}$ (resp. $(\omega_{2,n})_{n \geq 1}$) generates $T_{\p}(E)$ (resp. $T_{\bar{\p}}(E)$).
\item[(2)]  $z \coloneqq [x_{2}, x_{1}]$ generates the inertia subgroup $\mathbb{Z}_{p}(1) \subset \Pi_{1,1}$ determined by $\vec{O}$.
\end{enumerate}
\end{lemma}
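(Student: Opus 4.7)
The plan is to construct the basis in two stages: first select a basis of $\Pi_{1,1}^{\mathrm{ab}} \cong T_p(E)$ adapted to the CM decomposition, and then lift it to $\Pi_{1,1}$ while correcting the lift along the descending central series so that the commutator condition is met. Since $p$ splits in $O_K$ as $(p) = \p \bar{\p}$, the Tate module decomposes as $T_p(E) = T_{\p}(E) \oplus T_{\bar{\p}}(E)$. Choosing generators $e_1 \in T_{\p}(E)$ and $e_2 \in T_{\bar{\p}}(E)$ gives a basis of $\Pi_{1,1}^{\mathrm{ab}}$; after rescaling $e_2$ by a suitable element of $\mathbb{Z}_p^\times$ (which does not affect condition (1)), I may assume that the change-of-basis matrix $M \in \mathrm{GL}_2(\mathbb{Z}_p)$ from $\{x, y\}$ to $\{e_1, e_2\}$ satisfies $\det M = 1$. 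Any lifts $\tilde e_1, \tilde e_2 \in \Pi_{1,1}$ of these then form a basis of the free pro-$p$ group $\Pi_{1,1}$ by the Burnside basis theorem, since their images modulo the Frattini subgroup generate the Frattini quotient; this handles condition (1).

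For condition (2), I would pass to the associated graded Lie algebra $\gr(\Pi_{1,1}) = \bigoplus_{m \geq 1} \Pi_{1,1}(m)/\Pi_{1,1}(m+1)$, which is the free $\mathbb{Z}_p$-Lie algebra on $\bar x, \bar y$. Because $\det M = 1$, a direct computation in $\gr_2$ gives $[\tilde e_2, \tilde e_1] \equiv [y, x] \pmod{\Pi_{1,1}(3)}$. I then proceed by induction on $m \geq 3$: assuming $[\tilde e_2, \tilde e_1] = [y, x] \cdot r$ with $r \in \Pi_{1,1}(m)$, I would replace $\tilde e_i$ by $\tilde e_i w_i$ with $w_i \in \Pi_{1,1}(m-1) \subseteq \Pi_{1,1}(2)$, which preserves the image in $\Pi_{1,1}^{\mathrm{ab}}$. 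A direct commutator expansion yields
\[
[\tilde e_2 w_2, \tilde e_1 w_1] \equiv [\tilde e_2, \tilde e_1] \cdot [\tilde e_2, w_1] \cdot [w_2, \tilde e_1] \pmod{\Pi_{1,1}(m+1)},
\]
so the correction in $\gr_m$ is the image of $[\bar e_2, \bar w_1] + [\bar w_2, \bar e_1]$. Since $\gr(\Pi_{1,1})$ is generated as a Lie algebra by its degree-one part, the $\mathbb{Z}_p$-linear map $(\bar w_1, \bar w_2) \mapsto [\bar e_2, \bar w_1] + [\bar w_2, \bar e_1]$ from $\gr_{m-1}^{\oplus 2}$ to $\gr_m$ is surjective, so I can choose $w_1, w_2$ to annihilate the class of $r$. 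Passing to the inverse limit gives $x_1, x_2 \in \Pi_{1,1}$ with $[x_2, x_1] \in \langle [y, x] \rangle$, and the lowest-level normalization forces $[x_2, x_1] = [y, x]$, whence $\langle [x_2, x_1] \rangle = \langle [y, x] \rangle$.

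The main obstacle is the inductive step: one must carry out the commutator expansion of $[\tilde e_2 w_2, \tilde e_1 w_1]$ modulo $\Pi_{1,1}(m+1)$ carefully, using commutator identities such as $[ab, c] = [a, c]^b [b, c]$ and exploiting that conjugation by an element of $\Pi_{1,1}(k)$ is trivial modulo the appropriate term of the central series, and then invoke surjectivity of the adjoint action in the free Lie algebra over $\mathbb{Z}_p$. Both ingredients are standard but require some care with signs and coefficients. Conceptually, this construction is a pro-$p$ analogue of the classical fact that the mapping class group $\mathrm{SL}_2(\mathbb{Z})$ of the once-punctured torus acts on $\pi_1^{\mathrm{top}}(X(\mathbb{C}))$ preserving the conjugacy class of the boundary loop $[y, x]$.
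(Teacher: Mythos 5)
Your construction is correct, but it follows a genuinely different route from the paper. The paper's proof is essentially a one-liner: it cites Kaneko's result that the natural map $\tilde{\Gamma}_{1,1} \to \mathrm{Aut}(\Pi_{1,1}^{\mathrm{ab}}) \cong \mathrm{GL}_{2}(\mathbb{Z}_{p})$ is surjective, where $\tilde{\Gamma}_{1,1}$ consists of automorphisms of $\Pi_{1,1}$ preserving the conjugacy class of the inertia subgroups at $O$; one lifts the change of basis on $T_{p}(E)$ to such an automorphism $f$, applies $f$ to $\{x,y\}$, and then composes with an inner automorphism (which is trivial on $\Pi_{1,1}^{\mathrm{ab}}$) to move $f(\langle [y,x]\rangle)$ back onto $\langle [y,x]\rangle$ itself. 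What you do instead is reprove, by hand, exactly the instance of this surjectivity that the lemma needs: normalize the determinant in degree $2$ so that $[\tilde e_{2},\tilde e_{1}] \equiv [y,x] \bmod \Pi_{1,1}(3)$, and then run a successive-approximation argument along the descending central series, using that $\mathrm{gr}(\Pi_{1,1})$ is the free Lie algebra over $\mathbb{Z}_{p}$ on two degree-one generators and that its degree-$m$ part equals $[\mathrm{gr}_{1},\mathrm{gr}_{m-1}]$, so that each error class in $\mathrm{gr}_{m}$ can be killed by a correction $w_{i} \in \Pi_{1,1}(m-1)$; convergence of the corrections then yields $[x_{2},x_{1}]=[y,x]$ exactly, which is slightly stronger than the stated condition $\langle z\rangle = \langle [y,x]\rangle$. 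The trade-off: the paper's argument is shorter and delegates the real content to Kaneko's theorem on the pro-$p$ mapping class group, while yours is elementary and self-contained, at the cost of length and of two standard facts you should make explicit if you write it up — that a two-element generating set of a rank-two free pro-$p$ group is automatically a free basis (Burnside gives generation, the Hopfian property upgrades it to a basis), and that the identity $L_{m}=[L_{1},L_{m-1}]$ for the free Lie algebra holds integrally over $\mathbb{Z}_{p}$, not just after tensoring with $\mathbb{Q}_{p}$. Your commutator expansion modulo $\Pi_{1,1}(m+1)$ and the determinant computation in $\mathrm{gr}_{2}$ are both correct as stated (the error terms $[w_{2},w_{1}]$ and the conjugation corrections land in $\Pi_{1,1}(m+1)$ precisely because $m \geq 3$).
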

\begin{proof}
By a result of Kaneko \cite[Proposition 2]{Ka}, we can choose a basis $\{ x_{1}, x_{2} \}$ so that it satisfies (1) and the subgroup generated by $[x_{1},x_{2}]$ is conjugate to the inertia subgroup determined by $\vec{O}$.  Since the inner automorphism group acts trivially on the maximal abelian quotient $\Pi_{1,1}^{\ab}$, we may replace $\{ x_{1}, x_{2} \}$ with its conjugate so that it also satisfies (2). This concludes the proof.
\end{proof}

\subsection{Structure of graded Lie algebra}\label{2.1}

In this subsection, we introduce a certain filtration on the absolute Galois group $G_{K}$ and formulate a working hypothesis on the structure of the graded Lie algebra associated to this filtration (Conjecture \ref{cnj:DI11}). This is an analogue of the so-called \emph{Deligne-Ihara conjecture} in the case of $\mathbb{P}^{1}_{\mathbb{Q}}-\{0,1, \infty\}$. We then  study some fundamental properties of the fixed field of $\Ker(\rho_{X,p})$ of the pro-$p$ outer Galois representation associated to $X$, and state the main result.

\medskip 

First, we introduce the \emph{pro-$p$ mapping class group of type $(1,1)$}: Denote
\[
\tilde{\Gamma}_{1,1} \coloneqq \{ f \in \Aut(\Pi_{1,1}) \mid  \text{$f$ preserves the conjugacy class of the inertia subgroup $\langle z \rangle$} \}
\] and define the pro-$p$ mapping class group of type $(1,1)$ by the quotient
\[
\Gamma_{1,1} \coloneqq \tilde{\Gamma}_{1,1} / \Inn(\Pi_{1,1}) \subset \Out(\Pi_{1,1}).
\] We define the weight filtration on  $\tilde{\Gamma}_{1,1}$ and $\Gamma_{1,1}$ by 
\[
F^{m}\tilde{\Gamma}_{1,1} \coloneqq \Ker \left( \tilde{\Gamma}_{1,1} \to \Aut(\Pi_{1,1}/\Pi_{1,1}(m+1)) \right)
\quad \text{and} \quad
F^{m}\Gamma_{1,1} \coloneqq F^{m}\tilde{\Gamma}_{1,1} \Inn(\Pi_{1,1})/\Inn(\Pi_{1,1})
\] for each positive integer $m$. They define descending central filtrations, and the intersection $\cap_{m \geq 1} F^{m}\tilde{\Gamma}_{1,1}$ is trivial since $\cap_{m \geq 1} \Pi_{1,1}(m+1)$ is trivial. Moreover, the intersection $\cap_{m \geq 1} F^{m}\Gamma_{1,1}$ is also trivial by a result of Asada \cite[Theorem 2]{As}. 

\medskip

The absolute Galois group $G_{K}$ inherits a descending central filtration $\{ F^{m}G_{K} \}_{m \geq 1}$ by taking a pullback $F^{m}G_{K} \coloneqq \rho_{X,p}^{-1}(F^{m}\Gamma_{1,1})$, and we call it the weight filtration (on $G_{K}$). Note that we have  $F^{1}G_{K}=G_{K(E[p^{\infty}])}$. We form graded quotients and their directed sum as
\[
\mathfrak{g}_{m} \coloneqq F^{m}G_{K}/F^{m+1}G_{K} \text{ for $m \geq 1$ and } \mathfrak{g} \coloneqq \bigoplus_{m \geq 1} \mathfrak{g}_{m}.
\] Since the weight filtration is descending and central, the direct sum $\mathfrak{g}$ is a graded Lie algebra over $\mathbb{Z}_{p}$ whose bracket is induced by commutators. 

We briefly summarize properties of the graded quotients:

\begin{proposition}\label{prp:graded}
    Let $m$ be a positive integer. The following assertions hold:
    \begin{enumerate}
        \item $\mathfrak{g}_{m}$ is trivial whenever $m$ is odd, and it is also trivial when $m=2$.
        \item  $\mathfrak{g}_{m}$ is a free $\mathbb{Z}_{p}$-module of finite rank.
        \item $\mathfrak{g}_{m} \otimes \mathbb{Q}_{p}$ is isomorphic to a direct sum of (a direct sum of) $\mathbb{Q}_{p}(\boldsymbol{m})$'s as $G_{K}$-modules, where $\boldsymbol{m}$ runs over pairs of positive integers satisfying $|\boldsymbol{m}|=m$. In particular, the graded Lie algebra $\mathfrak{g} \otimes \mathbb{Q}_{p}$ has a structure of a bigraded Lie algebra.
    \end{enumerate}
\end{proposition}
\begin{proof}
    (1) The assertion follows from  \cite[(4.2) Proposition]{Na2} and \cite[(4.4)]{Na2}. (2) The assertions follows since $\mathfrak{g}_{m}$ is embedded into $F^{m}\Gamma_{1,1}/F^{m+1}\Gamma_{1,1}$, which is a free $\mathbb{Z}_{p}$-module \cite[Corollary (1.16), $(\mathrm{ii})$]{NT} of finite rank. (3) The assertion follows immediately from the $\GL_{2}(\mathbb{Z}_{p})$-equivariance of the commutative diagram given in \cite[Theorem (1.14)]{NT}.
\end{proof}

\begin{remark}
    The Lie algebra $\mathfrak{g} \otimes \mathbb{Q}_{p}$ only depends on the isomorphism class of $\bar{X}$ not on $X$: There is only one isomorphism class of elliptic curves over $\bar{K}$ having complex multiplication by $O_{K}$ and a base change by a finite extension preserves $\mathfrak{g} \otimes \mathbb{Q}_{p}$. 
\end{remark}

The Deligne-Ihara conjecture states that, a $\mathbb{Q}_{p}$-graded Lie algebra, which is defined similarly to $\mathfrak{g} \otimes \mathbb{Q}_{p}$ using the pro-$p$ outer Galois representation of $\mathbb{P}^{1}_{\mathbb{Q}}-\{ 0,1,\infty \}$ instead that of $X$, is freely generated by one element in each odd degree $m \geq 3$. This conjecture appears in e.g. \cite[p.251, after Theorem (1)]{Ih6} and \cite[the discussion before Theorem 1.1]{Sh}, and there the conjecture was attributed to Deligne's monograph \cite{Del}. We formulate an analogue of this conjecture, as follows. Let us define the set of pairs of integers
\[
I \coloneqq \{ \boldsymbol{m}=(m_{1}, m_{2}) \in \mathbb{Z}_{ \geq 1}^{2} \setminus \{ \boldsymbol{1} \} \mid m_{1} \equiv m_{2} \bmod \lvert O_{K}^{\times} \rvert \}.
\] This set plays a similar role as the set $1+2\mathbb{Z}_{\geq 1}=\{ 3,5,7, \dots \}$ appearing in the case of the projective line minus three points. Our working conjectural description of the Lie algebra $\mfg \otimes \mbQ_{p}$ is  as follows:

\begin{conjecture}\label{cnj:DI11}
            $\mathfrak{g} \otimes \mathbb{Q}_{p}$ is freely generated by one element in each degree in $I$.
\end{conjecture}

\begin{remark}
    (1) One can prove the generation portion of Conjecture \ref{cnj:DI11}, assuming that the second cohomology group $H^{2}_{\et}(O_{K}[1/p], \mathbb{Z}_{p}(\boldsymbol{m}))$ is finite for every $\boldsymbol{m} \in I$, by using (a slightly modified version of) the theory of weighted completions developed by Hain-Matsumoto \cite{HM2}. This provides one evidence for Conjecture \ref{cnj:DI11}, at least for its generation portion. At the writing of the present paper, however, we do not know how to prove the freeness.

    (2) Our proof of Theorem \ref{thm:A} (given in Section \ref{4}) shows that, under the assumption of Theorem \ref{thm:A}, one can even choose a basis of $\mathfrak{g} \otimes \mathbb{Q}_{p}$ so that it also freely generates $\mathfrak{g}$, cf. Corollary \ref{cor:integral} (1). This integral version of Conjecture \ref{cnj:DI11} is expected not to hold in general, and we plan to investigate the integral structure of $\mathfrak{g}$ in future research.
\end{remark}

Although the statement of Conjecture \ref{cnj:DI11} may seem unexpected at first sight, we rephrase it in a different manner in the next subsection by using certain natural characters arising from the $G_{K}$-action on the metabelian fundamental group. The author thinks it gives a more natural statement.

\subsection{Elliptic analogues of Soul\'e characters}\label{2.2}

The aim of this subsection is twofold: First, we study the $G_{K}$-action on the maximal metabelian quotient of $\Pi_{1,1}$ and present some previous results. Then we interpret Conjecture \ref{cnj:DI11} in terms of elliptic analogues of the Soul\'e characters, which is useful in proving our main result. We introduce a certain power series that expresses the Galois action on the maximal metabelian quotient of $\Pi_{1,1}$: This power series is an elliptic analogue of the universal power series for Jacobi sums introduced by Ihara \cite{Ih1}, expressing the Galois action on the metabelian pro-$p$ fundamental group of $\mathbb{P}^{1}_{\mathbb{Q}}-\{ 0,1, \infty \}$. It is extensively studied in the paper of Nakamura \cite{Na2}, and there an application to anabelian geometry of once-punctured elliptic curves is given. We will not go into detail here, and interested readers are encouraged to refer to the papers \cite{Ih1} (in the case of genus zero)  \cite{Na2} (in the case of genus one).

First, we define the subgroup of $\tilde{\Gamma}_{1,1}$ by
\[
\Gamma_{1,1}^{\dag} \coloneqq \{ f \in \Aut(\Pi_{1,1}) \mid \text{$f$ preserves $\langle z \rangle$} \}\footnote{In \cite{Na2}, this subgroup is denoted by $\Gamma_{1,1}^{\ast}$. Since we use the symbol $\ast$ to refer to different kinds of objects in this paper, we use the symbol $\dagger$ instead.}.
\] The weight filtration induces a filtration on $\Gamma^{\dag}_{1,1}$ by taking the intersection
\[
 F^{m}\Gamma^{\dag}_{1,1} \coloneqq \Gamma^{\dag}_{1,1} \cap F^{m}\tilde{\Gamma}_{1,1}. 
\]
Observe that the normalizer subgroup of $\langle z \rangle$ in $\Pi_{1,1}$ coincides with $\langle z \rangle$ itself. It follows that the intersection $F^{m}\Gamma_{1,1}^{\dag} \cap \Inn(\Pi_{1,1})$ is trivial for every $m \geq 3$ and otherwise coincides with $\langle \mathrm{inn}(z) \rangle$. Therefore, the natural projection induces an isomorphism
\[
F^{m}\Gamma_{1,1}^{\dag} \to F^{m}\Gamma_{1,1}
\] for every $m \geq 3$ and, otherwise, the kernel coincides with $\langle \mathrm{inn}(z) \rangle$ \cite[(4.4)]{Na2}. These homomorphisms provide a group-theoretically natural way to lift elements of the pro-$p$ mapping class group.

We also define two subgroups $\Psi^{\dag}_{1}, \Psi^{\dag}$ of the automorphism group of the maximal metabelian quotient of $\Pi_{1,1}$ by
\[
\Psi^{\dag} \coloneqq \{f \in \Aut\left( \Pi_{1,1}^{\mathrm{mab}}\right) \mid \text{$f$ preserves $\langle \bar{z} \rangle$} \}
\quad \text{and} \quad
\Psi^{\dag}_{1} \coloneqq \Ker \left( \Psi^{\dag} \to \Aut(\Pi^{\rm{ab}}) \right),
\]
where $\bar{z} \in \Pi_{1,1}^{\mathrm{mab}}$ denotes the image of $z$. Note that each element $f \in \Psi^{\dag}_{1}$ is uniquely determined by a pair 
$
(f(x_{1})x_{1}^{-1}, f(x_{2})x_{2}^{-1})$. Since $\Pi_{1,1}^{\mathrm{mab}}(2)$ is a free $\mathbb{Z}_{p}[[\Pi_{1,1}^{\ab}]]$-module generated by $\bar{z}$ \cite[Theorem 2]{Ih1}, we can find a unique element $G_{i}(f) \in \mathbb{Z}_{p}[[\Pi_{1,1}^{\ab}]]$ satisfying
\[
f(x_{1})x_{1}^{-1}=G_{1}(f)z 
\quad \text{and} \quad
f(x_{2})x_{2}^{-1}=G_{2}(f)z.
\] In the following, we identify the completed group ring $\mathbb{Z}_{p}[[\Pi_{1,1}^{\rm{ab}}]]$ with the power series ring in two variables $\mathbb{Z}_{p}[[T_{1}, T_{2}]]$ by $T_{i} \coloneqq x_{i}-1$ for $i=1,2$. We then obtain two power series from each element of $\Psi^{\dagger}_{1}$, and these two are related as follows:

\begin{lemma}[Nakamura {\cite[(4.7)]{Na2}}, Tsunogai {\cite[Proposition 1.9]{Ts}}]\label{lmm:BNT}\,
    \begin{enumerate}
        \item For $f \in \Psi^{\dag}_{1}$, two power series $G_{1}(f)$ and $G_{2}(f)$ satisfy the relation
\[
T_{1}G_{2}(f)-G_{1}(f)T_{2}=0.
\] 
\item If we write $H(f) \coloneqq \frac{G_{2}(f)}{T_{2}} = \frac{G_{1}(f)}{T_{1}}$, then we have an isomorphism 
\[
H \colon \Psi_{1}^{\dag} \xrightarrow{\sim} \mathbb{Z}_{p}[[T_{1}, T_{2}]] ; f \mapsto H(f).
\] In particular, the profinite group $\Psi_{1}^{\dag}$ is abelian.
\item The $\GL_{2}(\mathbb{Z}_{p})$-action on $\Psi_{1}^{\dag}$ induced by the  exact sequence
\[
1 \to \Psi_{1}^{\dag} \to \Psi \to \GL_{2}(\mathbb{Z}_{p}) \to 1
\] makes an isomorphism $H \colon \Psi_{1}^{\dag} \xrightarrow{\sim} \mathbb{Z}_{p}[[T_{1}, T_{2}]](\mathrm{det})$ equivariant under the action of $\GL_{2}(\mathbb{Z}_{p})$. Here, $(\mathrm{det})$ denotes the twist by the determinant character.
    \end{enumerate}
 
\end{lemma}

In the following, we shall exploit the section 
\[
s \colon F^{1}G_{K}=G_{K(E[p^{\infty}])} \to F^{3}\Gamma^{\dag}_{1,1}
\] constructed by Nakamura \cite[(4.4)]{Na2} as follows: First, by \cite[(4.4)]{Na2}, we have 
\[
F^{1}\Gamma_{1,1}=F^{2}\Gamma_{1,1}=F^{3}\Gamma_{1,1}.\] Hence the image of $F^{1}G_{K}$ under $\rho_{X,p}$ is contained in $F^{3}\Gamma_{1,1}$. By composing the inverse of the isomorphism $F^{3}\Gamma_{1,1}^{\dag} \xrightarrow{\sim} F^{3}\Gamma_{1,1}$, we obtain the desired homomorphism $s \colon G_{K(E[p^{\infty}])} \to F^{3}\Gamma^{\dag}_{1,1}$. 

\begin{definition}
We define a $\Gal(K(E[p^{\infty}])/K)$-equivariant homomorphism 
\[
\alpha_{1,1} \colon G_{K(E[p^{\infty}])}^{\ab} \to \mathbb{Z}_{p}[[T_{1}, T_{2}]](1)
\] to be a compositum of 
\begin{enumerate}
    \item[(1)] the section $s \colon G_{K(E[p^{\infty}])} \to F^{3}\Gamma_{1,1}^{\dag}$ constructed above,
    \item[(2)] the natural projection $F^{3}\Gamma_{1,1}^{\dag} \to \Psi^{\dag}_{1}$, and
    \item[(3)] the isomorphism $H \colon \Psi^{\dag}_{1} \xrightarrow{\sim} \mathbb{Z}_{p}[[T_{1}, T_{2}]](\mathrm{det})$ constructed in Lemma \ref{lmm:BNT}. 
\end{enumerate}
\end{definition}

In \cite{Na2}, Nakamura obtained the explicit formula of the power series $\alpha_{1,1}$ for general once-punctured elliptic curves. We use the convention $0^{0} \coloneqq 1$ and regard $\mathbb{Z}_{p}[[T_{1}, T_{2}]]$ as a subring of $\mathbb{Q}_{p}[[U_{1}, U_{2}]]$ where $U_{i} \coloneqq \log(1+T_{i})$ in the following formula.

\begin{theorem}[Nakamura {\cite[Theorem (A) and (3.11.5)]{Na2}}]\label{thm:Na} We have
\[
\alpha_{1,1}(\sigma)=\sum_{m \geq 2 \colon \text{even}}^{\infty}\frac{1}{1-p^{m}}\sum_{\substack{\boldsymbol{m}=(m_{1}, m_{2}) \geq (0,0) \\ |\boldsymbol{m}|=m}}\kappa_{\boldsymbol{m}+\boldsymbol{1}}(\sigma)\frac{U_{1}^{m_{1}}U_{2}^{m_{2}}}{m_{1}! m_{2}!}
\] for every $\sigma \in F^{1}G_{K}$, where $\kappa_{\boldsymbol{m}} \colon F^{1}G_{K} \to \mathbb{Z}_{p}$ is a character whose reduction modulo $p^{n}$ corresponds to the $p^{n}$-th roots of 
\[
    \prod_{\substack{0 \leq a,b <p^{n} \\ p \nmid \gcd(a,b)}}
        \theta(a\omega_{1,n}+b\omega_{2,n}, L)^{a^{m_{1}-1}b^{m_{2}-1}}
\] for every $n \geq 1$ via Kummer theory. Here $\theta(z,L)$ is the fundamental theta function \cite[(2.1)]{Na2} and, $(\omega_{1,n})_{n},(\omega_{2,n})_{n} \in T_{p}(E)$ are defined in Lemma \ref{lmm:basis} (1). 
\end{theorem}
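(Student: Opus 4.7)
The plan is to proceed in analogy with Ihara's derivation of his universal power series for the thrice-punctured projective line, with the cyclotomic units $1-\zeta_{p^n}^{k}$ replaced by the theta values $\theta(a\omega_{1,n}+b\omega_{2,n}, \mathcal{L})$ attached to nontrivial $p^{n}$-torsion points of $E$. The pivot is that the multiplication-by-$p^{n}$ isogeny yields a tower of finite \'etale covers $E \setminus E[p^{n}] \to E \setminus O$ whose geometric fundamental groups sit inside $\Pi_{1,1}$ as open subgroups containing $\Pi_{1,1}(2)$. Consequently, for $\sigma \in G_{K(E[p^{\infty}])}$ the action of $s(\sigma)$ modulo $[\Pi_{1,1}(2), \Pi_{1,1}(2)]$ is read off from Kummer theory of certain $p^{n}$-th roots of theta values at these torsion points.

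First, I would give an explicit presentation of the fundamental group of $E \setminus E[p^{n}]$ inside $\Pi_{1,1}$ by exhibiting loops around each nontrivial $p^{n}$-torsion point, conjugated by words in $x_{1}, x_{2}$ that record how the paths from the basepoint wind around the lattice. Using the Ihara-type freeness result that $\Pi_{1,1}(2)/[\Pi_{1,1}(2), \Pi_{1,1}(2)]$ is a free $\mathbb{Z}_{p}[[T_{1},T_{2}]]$-module generated by $\bar{z}$ (already invoked in Lemma \ref{lmm:BNT}), the inertia generator at the torsion point $a\omega_{1,n}+b\omega_{2,n}$ becomes an explicit $\mathbb{Z}_{p}[[T_{1},T_{2}]]$-multiple of $\bar{z}$, essentially proportional to $\bigl((1+T_{1})^{a}(1+T_{2})^{b}-1\bigr)/(T_{1}T_{2})$.

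Second, I would lift $x_{i}$ to the cover $E \setminus E[p^{n}]$, compute $\sigma(\tilde{x}_{i})\tilde{x}_{i}^{-1}$, and project back into $\Pi_{1,1}(2)/[\Pi_{1,1}(2), \Pi_{1,1}(2)]$. Standard Kummer theory identifies the coefficient attached to the inertia at $a\omega_{1,n}+b\omega_{2,n}$ with the $\sigma$-exponent on $\theta(a\omega_{1,n}+b\omega_{2,n},\mathcal{L})^{1/p^{n}}$. Substituting the above expression for each inertia generator and then dividing by $T_{i}$ in accordance with Lemma \ref{lmm:BNT} yields, for $H(s(\sigma))$, a double sum over nontrivial $p^{n}$-torsion points of (Kummer exponent) times $(1+T_{1})^{a}(1+T_{2})^{b}$. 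Switching to $U_{i} = \log(1+T_{i})$ so that $(1+T_{1})^{a}(1+T_{2})^{b}=\exp(aU_{1}+bU_{2})$ and extracting the coefficient of $U_{1}^{m_{1}}U_{2}^{m_{2}}/(m_{1}!m_{2}!)$ produces the sum $\sum_{a,b} a^{m_{1}}b^{m_{2}}\cdot(\text{Kummer exponent})$, which matches the definition of $\kappa_{\boldsymbol{m}+\boldsymbol{1}}(\sigma)$ after one reindexes the product of theta values by weights $a^{m_{1}-1}b^{m_{2}-1}$.

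The final step is to pass from finite level $n$ to the inverse limit and account for the factor $\tfrac{1}{1-p^{m}}$. This factor comes from the distribution relation satisfied by the fundamental theta function: the product of $\theta(Q,\mathcal{L})$ over $Q$ with $pQ=P$ is, up to elementary corrections, a power of $\theta(P,\mathcal{L})$. Under the Galois transfer between levels, the Kummer exponents satisfy a recursion whose inhomogeneous term scales by $\chi^{\boldsymbol{m}+\boldsymbol{1}}$ restricted to the relevant Frobenius, i.e.\ by $p^{m}$ after summing over weight-$m$ multiindices, producing the geometric series $\sum_{n \geq 0} p^{mn} = \tfrac{1}{1-p^{m}}$. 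The restriction to even $m$ reflects the invariance of $\theta(z,\mathcal{L})$ under $z \mapsto -z$, which forces odd-weight Kummer characters to vanish on $G_{K(E[p^{\infty}])}$. I expect the main obstacle to be the geometric-combinatorial bookkeeping of the first two steps: identifying each inertia generator as the correct $\mathbb{Z}_{p}[[T_{1},T_{2}]]$-multiple of $\bar{z}$ with the right normalization of (possibly tangential) basepoints and paths, and then cleanly matching the resulting Kummer expression with the definition of $\kappa_{\boldsymbol{m}+\boldsymbol{1}}$ in Theorem \ref{thm:Na}; the Taylor-expansion and distribution-relation computations, while delicate, are essentially formal.
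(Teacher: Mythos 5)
This statement is not proved in the paper at all: it is quoted verbatim from Nakamura \cite[Theorem (A) and (3.11.5)]{Na2}, and your outline essentially retraces Nakamura's original derivation (the elliptic analogue of the Ihara--Kaneko--Yukinari computation \cite{IKY}: presenting the covers $E \setminus E[p^{n}] \to E \setminus O$ inside $\Pi_{1,1}$, identifying inertia classes in $\Pi_{1,1}(2)/[\Pi_{1,1}(2),\Pi_{1,1}(2)]$ as $\mathbb{Z}_{p}[[T_{1},T_{2}]]$-multiples of $\bar{z}$ via Lemma \ref{lmm:BNT}, Kummer theory on theta values at torsion points, and the distribution relation producing the factor $\frac{1}{1-p^{m}}$). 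So your route matches the cited source's approach; only the finer normalizations (e.g.\ the exact $\mathbb{Z}_{p}[[T_{1},T_{2}]]$-coefficient you assign to each inertia generator) would need the careful bookkeeping you already acknowledge.
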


\begin{remark}
(1) Note that $\theta(z,L)$ is defined on $\mathbb{C}$ and is not periodic with respect to $L$. Hence we have to choose a lift of $a\omega_{1,n}+b\omega_{2,n} \in E(\mathbb{C}) \cong \mathbb{C}/L$ to $\mathbb{C}$ to consider the value $\theta(a\omega_{1,n}+b\omega_{2,n}, L)$. However, by \cite[(2.2) Proposition (1)]{Na2}, the values of $\theta(z,L)$ at any such two lifts coincide up to $p$-th power roots of unity. Since $K(E[p^{\infty}])$ contains all $p$-th power roots of unity, the Kummer characters associated to $p$-th power roots of $\theta(z,L)$ at any such two lifts also coincide. This resolves an ambiguity occurring in Theorem \ref{thm:Na}.

(2) Originally, the power series $\alpha_{1,1}$ and its explicit formula is defined and proved by using a certain basis of $\Pi_{1,1}$ coming from that of the topological fundamental group of $X(\mathbb{C})$. However, the proof of Theorem \ref{thm:Na} given in \cite{Na2} works for our basis $\{ x_{1},x_{2} \}$ as it is.
\end{remark}

In the case of $\mathbb{P}^{1}_{\mathbb{Q}}-\{ 0,1, \infty \}$, it is known that coefficients of the universal power series for Jacobi sums correspond to the so-called \emph{Soul\'e characters} $\{ \chi_{m} \colon G_{\mathbb{Q}(\mu_{p^{\infty}})}^{\ab} \to \mathbb{Z}_{p}(m) \}_{m \geq 3, \mathrm{odd}}$ (see Ihara-Kaneko-Yukinari \cite[Theorem $\mathrm{A}_{2}$]{IKY}). Essentially, these characters are  introduced by Soul\'e \cite{So1}, and they enjoy arithmetically interesting properties. We refer the interested reader to the article of Ichimura-Sakaguchi \cite{IS}, for example. Based on this analogy, we define their elliptic analogues as follows:

\begin{definition}[The elliptic Soul\'e character]
For every $\boldsymbol{m}=(m_{1}, m_{2})>\boldsymbol{1}$ such that $\lvert \boldsymbol{m} \rvert$ is even, we call the character $\kappa_{\boldsymbol{m}}$ \emph{the $\boldsymbol{m}$-th elliptic Soul\'e character}.
\end{definition}

Nakamura proved that some linear combinations of the elliptic Soul\'e characters are nontrivial \cite[(3.12)]{Na2}.  In our previous paper \cite{IS1/2}, we conditionally proved that the elliptic Soul\'e characters arising from once-punctured CM elliptic curves are also nontrivial. We briefly summarize our previous results.

Since the power series $\alpha_{1,1}$ is compatible with the action of $\Gal(K(E[p^{\infty}])/K)$, the $\boldsymbol{m}$-th elliptic Soul\'e character $\kappa_{\boldsymbol{m}}$ is a $\Gal(K(E[p^{\infty}])/K)$-equivariant homomorphism
\[
\kappa_{\boldsymbol{m}} \colon G_{K(E[p^{\infty}])}^{\ab} \to \mathbb{Z}_{p}(\boldsymbol{m}).
\] 
Recall that $\Omega$ is defined to be the maximal pro-$p$ extension of $K(p)$ unramified outside $p$.

\begin{theorem}[{\cite[Theorem 1.5 (1) and Lemma 4.1]{IS1/2}}]\label{thm:1}\,
    \begin{enumerate}
        \item The $\boldsymbol{m}$-th elliptic Soul\'e  character $\kappa_{\boldsymbol{m}}$ is trivial unless $\boldsymbol{m}$ is contained in $I$.
        \item When $\boldsymbol{m} \in I$, the character $\kappa_{\boldsymbol{m}}$ factors through the maximal abelian quotient of $\Gal(\Omega/K(p^{\infty}))$. Moreover, it is nontrivial if $H^{2}_{\et}(O_{K}[1/p], \mathbb{Z}_{p}(\boldsymbol{m}))$ is finite.
    \end{enumerate}
\end{theorem}

Here, we write 
\[
H^{2}_{\et}(O_{K}[1/p], \mathbb{Z}_{p}(\boldsymbol{m})) \coloneqq \varprojlim H^{2}_{\et}(O_{K}[1/p], \mathbb{Z}_{p}/p^{n}(\boldsymbol{m}))
\] for the second \'etale cohomology group of the spectrum of the ring of $p$-integers of $K$ with coefficient in $\mathbb{Z}_{p}(\boldsymbol{m})$, whose finiteness is known to follow from a conjecture of Jannsen \cite[Conjecture 1]{Ja}. It vanishes for every $\boldsymbol{m} \in (p-1)\mathbb{Z}_{\geq 1}^{2}$ by \cite[Lemma 4.7]{IS1/2}. It is also finite when the coefficient is the $m$-th Tate twist $\mathbb{Z}_{p}(m)$ for some $m \geq 2$ by a classical result of Soul\'e \cite[page 287, Corollaire]{So3}. However, to the best of the author's knowledge, the finiteness remains an open problem in general. We also have established a sufficient condition that elliptic Soul\'e characters are surjective, based on a relationship between elliptic Soul\'e characters and elliptic units:

\begin{theorem}[{\cite[Theorem 1.5]{IS1/2}}]\label{thm:2}\,
\begin{enumerate}
    \item[(1)] The character $\kappa_{\boldsymbol{m}}$ is not surjective for every $\boldsymbol{m} \in I$ such that $\boldsymbol{m} \geq (2,2)$ and $\boldsymbol{m} \equiv \boldsymbol{1} \mod p-1$. 
    
    \item[(2)] Assume the class number of $K(p)$ is not divisible by $p$ and there are only two primes of $K(p)$ above $p$. Then $\kappa_{\boldsymbol{m}}$ is surjective for every $\boldsymbol{m} \in I$ such that $\boldsymbol{m} \not \equiv \boldsymbol{1} \bmod p-1$. 
\end{enumerate}
\end{theorem}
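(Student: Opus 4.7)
The plan is to use Nakamura's explicit formula (Theorem \ref{thm:Na}) to identify $\kappa_{\boldsymbol{m}}$ as the Kummer character attached to the tower of theta-function products
\[
F_{n}(\boldsymbol{m}) := \prod_{\substack{0 \leq a,b < p^{n} \\ p \nmid \gcd(a,b)}} \theta(a\omega_{1,n}+b\omega_{2,n}, \mathcal{L})^{a^{m_{1}-1}b^{m_{2}-1}},
\]
and then to analyze these units over the $\mathbb{Z}_{p}^{2}$-tower $K(p^{\infty})/K$ by Iwasawa-theoretic means.

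For part (1), I would observe that when $\boldsymbol{m} \equiv (0,0)$ or $(1,1) \bmod p-1$, Fermat's little theorem gives $a^{m_{1}-1}b^{m_{2}-1} \equiv a^{\epsilon_{1}}b^{\epsilon_{2}} \bmod p$ with $(\epsilon_{1},\epsilon_{2}) \in \{(-1,-1),(0,0)\}$, so that the exponent modulo $p$ depends only on $(a,b) \bmod p$. Grouping the factors of $F_{n}(\boldsymbol{m})$ by residue class $(a,b) \bmod p$ and applying the distribution relation for the fundamental theta function (de Shalit \cite{dS}, Chapter~II), one can rewrite $F_{n}(\boldsymbol{m})$ as a $p$-th power in $K(E[p^{n}])^{\times}$ times a factor coming from the first layer $n=1$. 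Via Kummer theory this translates into $\kappa_{\boldsymbol{m}}$ taking values in $p\mathbb{Z}_{p}(\boldsymbol{m})$, so it fails to be surjective.

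For part (2), I would proceed by Iwasawa descent in three steps. First, using Nakamura's formula together with Kummer theory, identify the image of $\kappa_{\boldsymbol{m}}$ with the $\chi^{\boldsymbol{m}}$-component of the cokernel of the Kummer map from the elliptic-unit Iwasawa module into the local units of the tower $K(p^{\infty})/K(p)$. Second, use the hypotheses to eliminate the descent obstructions: $p \nmid h_{K(p)}$ kills the class-group contribution (by a standard Coates-type descent), and the assumption that there is a unique prime of $K(p)$ above $\p$ collapses the semi-local computation at $\p$ to a single local calculation over $K_{\p}$. Third, invoke the Coates--Wiles / de~Shalit explicit reciprocity law to compute the Coleman transform of the fundamental elliptic unit as a $p$-adic Hecke $L$-value $L_{p}(\chi^{\boldsymbol{m}})$; the excluded congruence classes $\boldsymbol{m} \equiv (0,0), (1,1) \bmod p-1$ are precisely those where $L_{p}(\chi^{\boldsymbol{m}})$ has a trivial zero (an Euler factor vanishing modulo $p$), so outside these classes the value is a $p$-adic unit, yielding surjectivity of $\kappa_{\boldsymbol{m}}$.

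The main obstacle will be Step~1 of part (2): matching $\kappa_{\boldsymbol{m}}$, defined via a coefficient of the two-variable series $\alpha_{1,1}$ acting on the meta-abelian quotient of $\Pi_{1,1}$, with the Kummer character of a \emph{single} fundamental elliptic unit whose Coleman transform is the Hecke $L$-value. Reducing the product structure of $F_{n}(\boldsymbol{m})$ to such a single unit requires an Euler-system / distribution-relation argument -- the elliptic analogue of Sharifi's reinterpretation of Soul\'e's characters via cyclotomic units -- and, to convert non-triviality into surjectivity in the correct eigenspace, appeals to the two-variable Iwasawa main conjecture for imaginary quadratic fields due to Rubin \cite{Ru2}.
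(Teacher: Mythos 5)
A preliminary structural remark: the present paper does not prove Theorem \ref{thm:2} at all --- it is imported verbatim from the author's earlier work \cite{IS1/2}, and the surrounding text only records that the nontriviality statement (Theorem \ref{thm:1}) rests on Rubin's main conjecture while the surjectivity statement was obtained ``by using elliptic units.'' So there is no proof in this paper to compare yours against line by line; I can only assess your sketch on its own terms, and as it stands it has genuine gaps.

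For part (1), your opening observation is vacuous: by Fermat's little theorem the exponent $a^{m_{1}-1}b^{m_{2}-1} \bmod p$ depends only on $(a,b) \bmod p$ for \emph{every} $\boldsymbol{m}$, not just for $\boldsymbol{m} \equiv (0,0),(1,1) \bmod p-1$, so this cannot be what singles out the excluded classes. The real content would have to be that for the special exponents ($\equiv 1$, resp.\ $\equiv (ab)^{-1}$ mod $p$) the distribution relations genuinely collapse $F_{n}(\boldsymbol{m})$, modulo $p$-th powers, to something of bounded level --- and that is exactly the step you assert rather than prove; as written, your argument would apply to all $\boldsymbol{m}$ and contradict part (2). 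For part (2), the final step is wrong in principle: the absence of a trivial zero does not make $L_{p}(\chi^{\boldsymbol{m}})$ a $p$-adic unit (that would be an unconditional regularity statement, false in general). The surjectivity in the relevant eigenspace has to be extracted from the hypotheses themselves: $p \nmid h_{K(p)}$ is the analogue of a regularity/Vandiver-type assumption controlling the index of the elliptic units, and the uniqueness of the prime above $\p$ controls the semi-local decomposition; the congruence classes $\boldsymbol{m} \equiv (0,0),(1,1) \bmod p-1$ are excluded because of the residual local/torsion obstructions there, not because everything outside them is automatically a unit. Finally, the step you yourself flag as the ``main obstacle'' --- identifying $\kappa_{\boldsymbol{m}}$, defined via a coefficient of $\alpha_{1,1}$ in Theorem \ref{thm:Na}, with the Kummer character of a single elliptic unit amenable to Coleman theory --- is the actual substance of the cited result (the elliptic analogue of the Ihara--Kaneko--Yukinari/Coleman comparison); leaving it open means the proposal is a reasonable research plan with the right toolkit, but not yet a proof.
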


Although $\kappa_{\boldsymbol{m}}$ is not surjective if $\boldsymbol{m} \geq (2,2)$ and $\boldsymbol{m} \equiv \boldsymbol{1} \mod p-1$, we will see that it is nontrivial under the assumption of Theorem \ref{thm:A}, cf. Corollary \ref{cor:app1}.

Now we discuss a relationship between Conjecture \ref{cnj:DI11} with elliptic Soul\'e characters. One basic observation is the following:

\begin{lemma}\label{lmm:restrict} For $\boldsymbol{m} \in I$, the following assertions hold.
    \begin{enumerate}
        \item[(1)] If $\kappa_{\boldsymbol{m}} \colon F^{1}G_{K} \to \mathbb{Z}_{p}(\boldsymbol{m})$ is nontrivial, then so is its restriction to ${F^{\lvert \boldsymbol{m} \rvert }G_{K}}$.
        \item[(2)] The character $\kappa_{\boldsymbol{m}} \mid_{F^{\lvert \boldsymbol{m} \rvert }G_{K}}$ factors through the $\lvert \boldsymbol{m} \rvert$-th graded quotient $\mathfrak{g}_{\lvert \boldsymbol{m} \rvert}$.
    \end{enumerate}
\end{lemma}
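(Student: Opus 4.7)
The plan is to prove (2) directly from Nakamura's explicit formula (Theorem~\ref{thm:Na}) together with a compatibility between the filtration $F^{\bullet}\Gamma_{1,1}^{\dag}$ and the augmentation-ideal filtration on $\Psi_{1}^{\dag} \cong \mathbb{Z}_p[[T_1, T_2]](1)$, and then deduce (1) by a weight-matching argument using the Galois-equivariance of $\kappa_{\boldsymbol{m}}$.

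For (2), I would first verify the following filtration compatibility: if $f \in F^{w}\Gamma_{1,1}^{\dag}$ with $w \geq 2$, then $H(f) \in (T_1, T_2)^{w-2}$. The reason is that $f$ acts trivially on $\Pi_{1,1}/\Pi_{1,1}(w+1)$, so $f(x_i)x_i^{-1}$ lies in the image of $\Pi_{1,1}(w+1)$ inside the meta-abelian quotient $\Pi_{1,1}(2)/[\Pi_{1,1}(2), \Pi_{1,1}(2)] \cong \mathbb{Z}_p[[T_1, T_2]]\bar z$. A standard Magnus-expansion computation identifies this image with $(T_1, T_2)^{w-1}\bar z$, yielding $G_i(f) \in (T_1, T_2)^{w-1}$; since $G_i(f) = T_i H(f)$ by Lemma~\ref{lmm:BNT}, one concludes $H(f) \in (T_1, T_2)^{w-2}$. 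Applying this with $w = |\boldsymbol{m}|+1 \geq 5$ (using $\boldsymbol{m} > \boldsymbol{1}$ with $|\boldsymbol{m}|$ even, so $|\boldsymbol{m}| \geq 4$) to $\sigma \in F^{|\boldsymbol{m}|+1}G_K$, we get $s(\sigma) \in F^{|\boldsymbol{m}|+1}\Gamma_{1,1}^{\dag}$ via the isomorphism $F^w\Gamma_{1,1}^{\dag} \xrightarrow{\sim} F^w\Gamma_{1,1}$ for $w \geq 3$, and hence $\alpha_{1,1}(\sigma) = H(s(\sigma)) \in (T_1, T_2)^{|\boldsymbol{m}|-1} = (U_1, U_2)^{|\boldsymbol{m}|-1}$, the two filtrations coinciding via $U_i = \log(1+T_i)$. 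Since by Theorem~\ref{thm:Na} the value $\kappa_{\boldsymbol{m}}(\sigma)$ is, up to a $\mathbb{Z}_p^{\times}$-scalar, the coefficient of $U_1^{m_1-1}U_2^{m_2-1}$, a monomial of total degree $|\boldsymbol{m}|-2$, this coefficient vanishes and $\kappa_{\boldsymbol{m}}(\sigma) = 0$. Thus $\kappa_{\boldsymbol{m}}|_{F^{|\boldsymbol{m}|}G_K}$ factors through $\mathfrak{g}_{|\boldsymbol{m}|}$.

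For (1), I would argue by contradiction: assume $\kappa_{\boldsymbol{m}}|_{F^{|\boldsymbol{m}|}G_K} = 0$ and show by downward induction on $j$ that $\kappa_{\boldsymbol{m}}|_{F^{j}G_K} = 0$ for every $1 \leq j \leq |\boldsymbol{m}|$, which contradicts nontriviality at $j = 1$. The base case $j = |\boldsymbol{m}|$ is the hypothesis. For the inductive step with $j < |\boldsymbol{m}|$, the vanishing $\kappa_{\boldsymbol{m}}|_{F^{j+1}G_K} = 0$ implies that the restriction to $F^{j}G_K$ factors as a $\mathrm{Gal}(K(E[p^{\infty}])/K)$-equivariant homomorphism $\mathfrak{g}_j \to \mathbb{Z}_p(\boldsymbol{m})$; here one also uses that $[F^1 G_K, F^j G_K] \subseteq F^{j+1}G_K$, so the conjugation action on $\mathfrak{g}_j$ does descend to $\mathrm{Gal}(K(E[p^{\infty}])/K)$. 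By the structural result recalled before Lemma~\ref{lmm:restrict}, $\mathfrak{g}_j \otimes \mathbb{Q}_p$ is a finite direct sum of $\mathbb{Q}_p(\boldsymbol{n})$'s with $|\boldsymbol{n}| = j$; since $\chi^{\boldsymbol{n}}$ and $\chi^{\boldsymbol{m}}$ have different Hodge--Tate weight sums whenever $|\boldsymbol{n}| \neq |\boldsymbol{m}|$, none of these summands is isomorphic to $\mathbb{Q}_p(\boldsymbol{m})$, and the map vanishes. As $\mathbb{Z}_p(\boldsymbol{m})$ is torsion-free, $\kappa_{\boldsymbol{m}}|_{F^{j}G_K} = 0$, completing the induction.

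The main obstacle is verifying the filtration compatibility in the proof of (2), namely $H(f) \in (T_1, T_2)^{w-2}$ for $f \in F^w\Gamma_{1,1}^{\dag}$. This reduces to identifying the image of $\Pi_{1,1}(w+1)$ inside $\mathbb{Z}_p[[T_1, T_2]]\bar z$ with $(T_1, T_2)^{w-1}\bar z$: the inclusion ``$\subseteq$'' is straightforward from the module structure, while ``$\supseteq$'' amounts to realizing each monomial $T_{i_1}\cdots T_{i_{w-1}}\bar z$ as the image of the iterated commutator $[x_{i_1}, [\ldots, [x_{i_{w-1}}, z]\ldots]] \in \Pi_{1,1}(w+1)$. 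Once this filtration statement is in place, both parts of the lemma follow essentially formally.
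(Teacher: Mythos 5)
Your proposal is correct and takes essentially the same route as the paper: for (2) you use the section $s$, the fact that the image of $\Pi_{1,1}(w+1)$ in $\Pi_{1,1}(2)/[\Pi_{1,1}(2),\Pi_{1,1}(2)]\cong\mathbb{Z}_p[[T_1,T_2]]\bar{z}$ is $(T_1,T_2)^{w-1}\bar{z}$ (which the paper simply quotes from Ihara \cite{Ih1}), and then read $\kappa_{\boldsymbol{m}}$ off as a coefficient of total degree $\lvert\boldsymbol{m}\rvert-2$ in $\alpha_{1,1}$, while for (1) you run the same weight/isotypic-component comparison with the decomposition of $\mathfrak{g}_j\otimes\mathbb{Q}_p$ into $\mathbb{Q}_p(\boldsymbol{n})$'s with $\lvert\boldsymbol{n}\rvert=j$. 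The only (harmless) differences are cosmetic: you work directly with $F^{\lvert\boldsymbol{m}\rvert+1}G_K$ where the paper first invokes $F^{m+1}G_K=F^{m+2}G_K$, and note that only the inclusion $\mathrm{im}\bigl(\Pi_{1,1}(w+1)\bigr)\subseteq(T_1,T_2)^{w-1}\bar{z}$ is actually needed (your reverse inclusion via iterated commutators is superfluous).
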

\begin{proof}
We denote $\lvert \boldsymbol{m} \rvert$ by $m$ for simplicity. (1) Suppose that the restricted character $\kappa_{\boldsymbol{m}} \mid_{F^{m}G_{K}}$ is trivial. Then there exists an integer $1 \leq n<m$ satisfying both $\kappa_{\boldsymbol{m}} \mid_{F^{n+1}G_{K}}=0$ and $\kappa_{\boldsymbol{m}} \mid_{F^{n}G_{K}} \neq 0$. In particular, the $n$-th graded quotient $\mathfrak{g}_{n} \otimes \mathbb{Q}_{p}$ has a nontrivial $\chi^{\boldsymbol{m}}$-isotypic component, which is absurd since $n$ is less than $m$. 

(2) It suffices to prove that the character $\kappa_{\boldsymbol{m}}$ vanishes when restricted to $F^{m+2}G_{K}$, since we have $F^{m+1}G_{K}=F^{m+2}G_{K}$ by Proposition \ref{prp:graded} (1). By the construction of the power series $\alpha_{1,1}(\sigma) \in \mathbb{Z}_{p}[[T_{1}, T_{2}]]$, it follows that, for every $\sigma \in F^{m+2}G_{K}$, 
\[
T_{1}\alpha_{1,1}(\sigma)z \in \Pi_{1,1}(m+3)\Pi_{1,1}(2)/[\Pi_{1,1}(2), \Pi_{1,1}(2)] \subset \mathbb{Z}_{p}[[T_{1}, T_{2}]]z.
\] Moreover, the submodule $\Pi_{1,1}(m+3)\Pi_{1,1}(2)/[\Pi_{1,1}(2), \Pi_{1,1}(2)]$ corresponds to $J^{m+1}$, where $J$ is the augmentation ideal of $\mathbb{Z}_{p}[[T_{1}, T_{2}]]$ \cite[(19) on page 67]{Ih1}. Hence the power series $T_{1}\alpha_{1,1}(\sigma)$ is contained in $J^{m+1}$, and we have $\alpha_{1,1}(\sigma) \in J^{m}$. This means that every coefficient of $\alpha_{1,1}(\sigma)$ of a monomial of total degree less than $m$ vanishes. By observing Theorem \ref{thm:Na}, it follows that $\kappa_{\boldsymbol{n}}(\sigma)$ vanishes for every $\boldsymbol{n} \in I$ such that $\lvert \boldsymbol{n} \rvert \leq m-2$ as desired.
\end{proof}

By Lemma \ref{lmm:restrict} (2), we obtain a homomorphism
\[
\mathfrak{g}_{m} \xrightarrow{\oplus \kappa_{\boldsymbol{m}}} \bigoplus_{\boldsymbol{m} \in I, |\boldsymbol{m}|=m} \mathbb{Z}_{p}(\boldsymbol{m})
\] for each even integer $m>2$. Since $\kappa_{\boldsymbol{m}}$ is originally defined on $F^{1}G_{K}$, this homomorphism factors though the $m$-th component $(\mathfrak{g}/[\mathfrak{g},\mathfrak{g}])_{m}$ of the abelianization of $\mathfrak{g}$. The following proposition tells us how to choose a basis of $\mathfrak{g} \otimes \mathbb{Q}_{p}$ in terms of elliptic Soul\'e characters:

\begin{proposition}\label{prp:DI11_Soule}
    Assume that the character $\kappa_{\boldsymbol{m}}$ is nontrivial for every $\boldsymbol{m} \in I$. Then, Conjecture \ref{cnj:DI11} is equivalent to the following statement: For every even $m>2$, the homomorphism
    \[
    (\mathfrak{g}/[\mathfrak{g},\mathfrak{g}])_{m} \otimes \mathbb{Q}_{p} \xrightarrow{\oplus \kappa_{\boldsymbol{m}}}  \bigoplus_{\boldsymbol{m} \in I, |\boldsymbol{m}|=m} \mathbb{Q}_{p}(\boldsymbol{m})
    \] is an isomorphism. Moreover, if we choose $\sigma_{\boldsymbol{m}}$ for each $\boldsymbol{m} \in I$ so that 
\begin{enumerate}
\item[(a)] $\sigma_{\boldsymbol{m}}$ is contained in the $\chi^{\boldsymbol{m}}$-isotypic component of $\mathfrak{g}_{\boldsymbol{\lvert m \rvert}}$, and
\item[(b)]$\kappa_{\boldsymbol{m}}(\sigma_{\boldsymbol{m}}) \neq 0$.
\end{enumerate}
Then the elements $\{ \sigma_{\boldsymbol{m}} \}_{\boldsymbol{m}\in I}$ freely generate $\mathfrak{g} \otimes \mathbb{Q}_{p}$.
\end{proposition}

\begin{proof} Assume Conjecture \ref{cnj:DI11}. The first statement is a consequence of Lemma \ref{lmm:restrict} (1). The second immediately follows from the first, since $\mathfrak{g} \otimes \mathbb{Q}_{p}$ is assumed to be free and the elements $\{ \sigma_{\boldsymbol{m}} \}_{\boldsymbol{m}\in I}$ generate $\mathfrak{g}$ (since they do at the level of the abelianization). 
\end{proof}

\subsection{Main result}\label{2.3}

We turn our attention to field-theoretic properties of the fixed field of the kernel of the pro-$p$ outer Galois representation. First, we study its ramification behavior as an extension over $K(E[p])$:

\begin{lemma}\label{lmm:good}
The field $\bar{K}^{\Ker(\rho_{X,p})}$ is a pro-$p$ extension of $K(E[p])$ unramified outside $p$. 
\end{lemma}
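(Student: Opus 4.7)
The plan is to establish, in order, the containment $K(E[p]) \subseteq \bar{K}^{\ker(\rho_{X,p})}$, the pro-$p$-ness of the resulting extension over $K(E[p])$, and its being unramified at every prime not above $p$. The first inclusion is immediate: any $\sigma \in \ker(\rho_{X,p})$ acts trivially on the Frattini quotient $\Pi_{1,1}^{\mathrm{ab}}/p \cong T_{p}(E)/p \cong E[p]$, so $\sigma \in G_{K(E[p])}$. Taking fixed fields identifies $\mathrm{Gal}(\bar{K}^{\ker(\rho_{X,p})}/K(E[p]))$ with $\rho_{X,p}(G_{K(E[p])})$, which lies in the kernel of the natural map $\Gamma_{1,1} \to \mathrm{GL}_{2}(\mathbb{F}_{p})$ induced by the action on $E[p]$. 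This kernel is pro-$p$, since it fits into a short exact sequence whose right-hand term is the pro-$p$ group $\ker(\mathrm{GL}_{2}(\mathbb{Z}_{p}) \to \mathrm{GL}_{2}(\mathbb{F}_{p}))$ and whose left-hand term is $F^{1}\Gamma_{1,1}$, itself pro-$p$ by virtue of the descending central filtration whose graded pieces embed into finitely generated free $\mathbb{Z}_{p}$-modules (as recalled at the start of Section \ref{2.2}).

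For the unramified-outside-$p$ statement, fix a prime $v$ of $K(E[p])$ with residue characteristic $\ell \neq p$ and an inertia subgroup $I_{v} \subseteq G_{K(E[p])}$; the goal is to show $\rho_{X,p}(I_{v}) = 1$. The argument splits into two steps. First, verify that $E_{K(E[p])}$ has good reduction at $v$: the CM hypothesis gives $E$ potential good reduction at every finite place, so the image of $I_{v}$ in $\mathrm{Aut}(T_{p}(E)) \cong \mathrm{GL}_{2}(\mathbb{Z}_{p})$ is finite; restricting to $G_{K(E[p])}$ forces this finite image into $\ker(\mathrm{GL}_{2}(\mathbb{Z}_{p}) \to \mathrm{GL}_{2}(\mathbb{F}_{p}))$, which is torsion-free for $p \geq 3$, so the image is in fact trivial, and the N\'eron--Ogg--Shafarevich criterion yields good reduction at $v$. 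Second, given good reduction, $X = E \setminus O$ extends to a smooth scheme over the local ring at $v$ (the origin $O$ persists as a section), and the specialization isomorphism for the pro-$p$ \'etale fundamental group---valid since $p \neq \ell$---makes the outer action of $G_{K(E[p])_{v}}$ on $\Pi_{1,1}$ factor through the residue field, so that $\rho_{X,p}(I_{v}) = 1$.

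The principal technical point lies in the good-reduction step above; the other ingredients are essentially formal manipulation of the definitions and standard properties of the pro-$p$ mapping class group. Note that this lemma does not rely on any of the deeper hypotheses (such as the analogue of the Deligne--Ihara conjecture) appearing in the main theorem, and serves only as a preparatory field-theoretic statement for the subsequent comparison argument.
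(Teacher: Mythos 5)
Your proposal is correct and follows essentially the same route as the paper: complex multiplication gives everywhere potential good reduction, the image of inertia away from $p$ over $K(E[p])$ lands in the torsion-free pro-$p$ kernel of $\mathrm{Aut}(T_{p}(E)) \to \mathrm{Aut}(E[p])$ and is therefore trivial, the N\'eron--Ogg--Shafarevich criterion yields good reduction, and specialization of pro-$p$ fundamental groups gives unramifiedness outside $p$. The only difference is that you spell out the containment $K(E[p]) \subseteq \bar{K}^{\ker(\rho_{X,p})}$ and the pro-$p$-ness of the extension, which the paper treats as standard and leaves implicit.
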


\begin{proof}
    Since the kernel of the homomorphism $\Out(\Pi_{1,1}) \to \Aut(\Pi_{1,1}^{\rm{ab}}/p)$ is pro-$p$, the field $\bar{K}^{\Ker(\rho_{X,p})}$ is a pro-$p$ extension of $K(E[p])$. To prove that the extension is unramified outside $p$, it suffices to show that $X \times_{\Spec(K)} \Spec(K(E[p]))$ or, equivalently, $E \times_{\Spec(K)} \Spec(K(E[p]))$ has good reduction everywhere outside $p$, by virtue of the specialization isomorphism \cite[X, Corollaire 3.9]{SGA1}. Using the N\'eron-Ogg-Shafarevich criterion, it suffices to show that every inertia subgroup $I$ associated with a prime of $K(E[p])$ lying outside $p$ acts trivially on $T_{p}(E)$. Since $E$ has complex multiplication, it has everywhere potentially good reduction. Hence the image of $I$ in $\Aut(T_{p}(E))$ is finite. Moreover, the image of $I$ is contained in $\Ker(\GL(T_{p}(E))\to \GL(E[p]))$, which is a torsion-free pro-$p$ group for every $p \geq 3$. Hence the image of $I$ is trivial as desired.
\end{proof}

Observe that the automorphism group $\Aut_{K}(X)$ defines a subgroup of $\Out(\Pi_{1,1})$. Since it is defined over $K$, it centralizes the image of $\rho_{X,p}$. This puts a strong constraint on the field $\bar{K}^{\Ker(\rho_{X,p})}$ as the following lemma shows:

\begin{lemma}\label{lmm:half}

The field $\bar{K}^{\Ker(\rho_{X,p})}$ is a compositum of $K(E[p])$ and a subfield $\Omega^{\ast} \subset \Omega$ in such a way that $\rho_{X,p}(G_{K(p)})$ splits into the direct product 
\[
\rho_{X,p}(G_{K(p)})= \Gal(\Omega/K(p))=
\Gal(K(E[p])/K(p)) \times \Gal(\Omega^{\ast}/K(p)). 
\] 
\end{lemma}

\begin{proof}
Consider the exact sequence
\[
1 \to \rho_{X, p}(G_{K(E[p])}) \to \rho_{X, p}(G_{K(p)}) \to \Gal(K(E[p])/K(p)) \to 1.
\]
Since $\rho_{X, p}(G_{K(E[p])})$ is a pro-$p$ group and $\Gal(K(E[p])/K(p))$ is a finite prime-to-$p$  group (here we use $p \geq 5$), the exact sequence splits by the Schur-Zassenhaus theorem \cite[5.9, Corollary 1]{Se}. Let us choose an arbitrary section
\[
t \colon \Gal(K(E[p])/K(p)) \to \rho_{X, p}(G_{K(p)}).
\] There is a natural homomorphism $O_{K}^{\times}=\Aut_{K}(X) \to \Out(\Pi_{1,1})$ by functoriality of \'etale fundamental groups, and the image of $\Aut_{K}(X)$ centralizes $\rho_{X,p}(G_{K})$. The $G_{K}$-action on $E[p]$ allows us to identify $\Gal(K(E[p])/K(p))$ with a subgroup of the image of $\Aut_{K}(X)$ in $\Aut_{O_{K}}(E[p])=(O_{K}/p)^{\times}$. Moreover, the automorphism group $\Aut_{K}(X)$ injects into $(O_{K}/p)^{\times}$ since $p$ is prime to the order of $O_{K}^{\times}$. Hence for every  $g \in \Gal(K(E[p])/K(p))$, we can find a unique automorphism $\tilde{g}$ of $X$ such that $t(g)$ and $\tilde{g}$ induce the same element in $\Out(\Pi_{1,1}^{\rm{ab}}/p)=\Aut(E[p])$. Since $t(g)\tilde{g}^{-1}$ has a prime-to-$p$ order and is contained in the pro-$p$ group $\Ker \left( \Out(\Pi_{1,1}) \to \Out(\Pi_{1,1}^{\rm{ab}}/p) \right)$, $t(g)$ must coincide with $\tilde{g}$. This argument shows that the image of $t$ is contained in (the image of) $\Aut_{K}(X)$. Therefore, $t$ induces the decomposition
\[
\rho_{X,p}(G_{K(p)})=\rho_{X,p}(G_{K(E[p])}) \times \Img(t).
\]
Let $\Omega^{\ast}$ be the field corresponding to the kernel of the projection $\rho_{X,p}(G_{K(p)}) \to \rho_{X,p}(G_{K(E[p])})$. It is clear that $\Omega^{\ast}$ is a pro-$p$ extension of $K(p)$ unramified outside $p$, and 
\[
\rho_{X,p}(G_{K(p)})=\Gal(\bar{K}^{\Ker(\rho_{X,p})}/K(p))=\Gal(K(E[p])/K(p)) \times \Gal(\Omega^{\ast}/K(p))
\] is the required decomposition.
\end{proof}

Lemma \ref{lmm:half} allows us to formulate the following question, which is an analogue of Anderson-Ihara's question in Introduction:
\begin{center}
\emph{Is the field $\Omega^{\ast}$ is equal to $\Omega$?}
\end{center}
In analogy with Sharifi's result, one may wonder if the question has an affirmative answer, assuming Conjecture \ref{cnj:DI11} and a certain arithmetic assumption on $p$ is satisfied. This is exactly the main result of the present paper:

\begin{theorem}\label{thm:main}
Let $p \geq 5$ be a prime which splits in $K$, and assume that the following conditions hold:
\begin{enumerate}
    \item[(1)] the class number of $K(p)$ is not divisible by $p$,
    \item[(2)] there are exactly two primes of $K(p^{2})$ above $p$, and
    \item[(3)] Conjecture \ref{cnj:DI11} holds. 
\end{enumerate}
Then $\Omega^{\ast}$ coincides with $\Omega$. Consequently, we have $\bar{K}^{\Ker(\rho_{X,p})}=K(E[p]) \cdot \Omega$.
\end{theorem}

The proof of Theorem \ref{thm:main} will be given at the end of Section \ref{4}.

\begin{remark}\label{rmk:secondcond}
    We keep the same notation as in Theorem \ref{thm:main}, and write $p=\pi\bar{\pi}$ for a prime element $\pi \in K$ and its conjugate $\bar{\pi}$. Then the following statements are equivalent:
    \begin{enumerate}
        \item There are exactly two primes of $K(p^{2})$ above $p$.
        \item $\Gal(K(p^{2})/K)$ coincides with its decomposition subgroup at $\p$.    
        \item The image of $\pi$ in $(O_{K}/\bar{\p}^{2})^{\times}/O_{K}^{\times}$ is a generator of this cyclic group.
        \item $\Gal(K(p^{\infty})/K)$ coincides with its decomposition subgroup at $\p$.
        \item The image of $\pi$ in $O_{K_{\bar{\p}}}^{\times}/O_{K}^{\times}$ is a (topological) generator of this procyclic group.
    \end{enumerate}
\end{remark}

Observe that we do not use the assumption that $p$ splits in $K$ to prove Lemma \ref{lmm:half} (though we use the fact that $p$ is prime to the order of $O_{K}^{\times}$). We expect an analogue of Anderson-Ihara's question to have an affirmative answer even if $p$ does not split. For example, we can show the following unconditional result, whose proof relies on the case of genus zero (this  result is not used in the rest of the paper):

\begin{proposition}\label{prp:ramify}
    Suppose
    \[
    K  \in \{ \mathbb{Q}(\sqrt{-7}), \mathbb{Q}(\sqrt{-11}),  \mathbb{Q}(\sqrt{-19}), \mathbb{Q}(\sqrt{-43}), \mathbb{Q}(\sqrt{-167}) 
    \}
    \] and let $p_{K}$ be a unique rational prime which ramifies in $K$. Then we have  $\bar{K}^{\Ker(\rho_{X,p_{K}})}=K(E[p_{K}]) \cdot \Omega$.
\end{proposition}
\begin{proof} 
    By our assumption, we have
    \[
    p_{K} \in \{ 7,11,19,43,163 \}
    \] and hence $p_{K}$ is odd and regular. Moreover, it is easy to observe that the mod-$p$ ray class field $K(p_{K})$ is a $p_{K}$-extension of $\mathbb{Q}(\mu_{p_{K}})=K(\mu_{p_{K}})$ unramified outside $p_{K}$, since it contains $K(\mu_{p_{K}})$ and  $[K(p):K(\mu_{p})]=p$. On the other hand, by ``Oda's prediction'' \cite[Theorem 3.6]{Ta12} established by Takao, it follows that
    \[
     \bar{\mathbb{Q}}^{\Ker(\rho_{\mathbb{P}^{1}_{\mathbb{Q}} \setminus \{ 0,1,\infty\}, p_{K}})}
     \subset
    \bar{K}^{\Ker(\rho_{X,p_{K}})}.
    \] However, the former field coincides with $\Omega^{\mathrm{cyc}}$ by Sharifi's theorem \cite[Theorem 1.1]{Sh}, together with the resolution of the Deligne-Ihara conjecture by Hain-Matsumoto \cite{HM2} and Brown \cite{Br}. Since $\Omega^{\mathrm{cyc}}$ is also the maximal pro-$p_{K}$ extension of $K(p_{K})$ unramified outside $p_{K}$, it coincides with $\Omega$. This concludes the proof since the proof of Lemma \ref{lmm:half} works even when $p \geq 5$ ramifies in $K$.
\end{proof}

\begin{remark}
    Proposition \ref{prp:ramify} even holds for $K=\mathbb{Q}(\sqrt{-3})$ (and hence $p_{K}=3$); In fact, since $E$ is then isomorphic to the Fermat curve of degree $3$ over $K(E[3])$, we may assume that $E$ is the Fermat curve of degree $3$. Then it is easy to observe $K=K(3)=K(E[3])$. B Lemma \ref{lmm:good} and a similar argument as that of Proposition \ref{prp:ramify}, we have
    $
        \Omega^{\mathrm{cyc}}=\Omega \subset \bar{K}^{\Ker(\rho_{X,3})} \subset \Omega
    $ as desired.
\end{remark}

\section{Two-variable filtrations on profinite groups}\label{3}

In this section, we define two-variable filtrations on various profinite groups, e.g. the pro-$p$ fundamental groups of once-punctured elliptic curves, subgroups of the pro-$p$ mapping class group of type $(1,1)$ and Galois groups, and establish some fundamental properties. Throughout this section, let $\Pi$ denote a free pro-$p$ group of rank two with basis $\{ x, y \}$ and define $z \coloneqq [y,x]$.

\subsection{Two-variable filtration on free pro-$p$ group of rank two}\label{3.1}

First, we define a two-variable filtration on $\Pi$, which is an analogue of the descending central series.

\begin{definition}\label{dfn:bifilt}
    For $\boldsymbol{m} \in \mathbb{Z}_{\geq 0}^{2} \setminus \{ (0,0) \}$, we inductively define a normal subgroup $\Pi(\boldsymbol{m})$ of $\Pi$ as follows:
\begin{enumerate}
\item[(1)] Let $\Pi(1, 0)$ (resp. $\Pi(0, 1)$) be the normal closure of $x$ (resp. $y$) in $\Pi$. 
\item[(2)] For $\boldsymbol{m}=(m_{1}, m_{2}) \in \mathbb{Z}_{\geq 0}^{2}$ with $|\boldsymbol{m}| \geq 2$, we define the subgroup $\Pi(\boldsymbol{m}) \subset \Pi$ by
\[ 
\Pi(\boldsymbol{m}) \coloneqq \langle [\Pi(\boldsymbol{m}'), \Pi(\boldsymbol{m}'')] \mid \boldsymbol{m}'+\boldsymbol{m}''=\boldsymbol{m} \text{ where $\boldsymbol{m}', \boldsymbol{m}'' \in \mathbb{Z}_{\geq 0}^{2} \setminus \{ (0,0) \}$} \rangle.
\] Since $\Pi(1,0)$ and $\Pi(0,1)$ are normal subgroups, the subgroup $\Pi(\boldsymbol{m})$ is also normal.
\end{enumerate}
\end{definition}

The definition of the two-variable filtration depends on the choice of the basis $\{x,y\}$ of $\Pi$. More precisely, $\Pi(1,0)$ (resp. $\Pi(0,1)$) depends on the image $\bar{y}$ of $y$ (resp. $\bar{x}$ of $x$) in the maximal abelian quotient $\Pi^{\ab}$ of $\Pi$.  

\begin{example}
By definition, $\Pi(1,1)$ coincides with the commutator subgroup $\Pi(2)$.
\end{example}

We have the following inclusions and equalities:

\begin{lemma}\label{lmm:incl}
Let $\boldsymbol{m}=(m_{1}, m_{2}), \boldsymbol{n}=(n_{1}, n_{2}) \in \mathbb{Z}_{\geq 0}^{2} \setminus \{ (0,0) \}$ and $m \geq 2$.

(1) The inclusion $\Pi(\boldsymbol{m}) \subset \Pi(|\boldsymbol{m}|)$ holds.

(2) The equality $\Pi(m,0)=\Pi(m,1)$ holds. Similarly, we have $\Pi(0,m)=\Pi(1, m)$.

(3) If  $\boldsymbol{m} \geq \boldsymbol{n}$, then the inclusion $\Pi(\boldsymbol{m}) \subset \Pi(\boldsymbol{n})$ holds.
\end{lemma}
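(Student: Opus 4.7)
The plan is to prove the three assertions in the order (1), (3), (2); the first two go by parallel inductions on $|\boldsymbol{m}|$, while (2) requires extra structural input beyond the purely recursive definition, and this is where I expect the only real obstacle.

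For (1), I would induct on $|\boldsymbol{m}|$. The base $|\boldsymbol{m}|=1$ is immediate. For $|\boldsymbol{m}|\geq 2$, the recursive definition writes $\Pi(\boldsymbol{m})$ as the normal closure of brackets $[\Pi(\boldsymbol{m}'),\Pi(\boldsymbol{m}'')]$ with $\boldsymbol{m}'+\boldsymbol{m}''=\boldsymbol{m}$. The induction hypothesis combined with the standard fact $[\Pi(a),\Pi(b)]\subset \Pi(a+b)$ for the ordinary descending central series (whose terms are normal in $\Pi$) gives $[\Pi(\boldsymbol{m}'),\Pi(\boldsymbol{m}'')]\subset \Pi(|\boldsymbol{m}|)$, and normality of $\Pi(|\boldsymbol{m}|)$ closes us up under the normal closure.

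For (3), I again induct on $|\boldsymbol{m}|$, the case $|\boldsymbol{m}|\leq 1$ being trivial. Fix $\boldsymbol{n}\leq \boldsymbol{m}$ with $|\boldsymbol{m}|\geq 2$ and a decomposition $\boldsymbol{m}=\boldsymbol{m}'+\boldsymbol{m}''$ appearing in the definition of $\Pi(\boldsymbol{m})$. If $\boldsymbol{m}'\geq \boldsymbol{n}$ or $\boldsymbol{m}''\geq \boldsymbol{n}$, the induction hypothesis and normality of $\Pi(\boldsymbol{n})$ give $[\Pi(\boldsymbol{m}'),\Pi(\boldsymbol{m}'')]\subset \Pi(\boldsymbol{n})$. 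Otherwise, set $\boldsymbol{n}'\coloneqq (\min(m'_i,n_i))_i$ and $\boldsymbol{n}''\coloneqq \boldsymbol{n}-\boldsymbol{n}'$; a coordinate-wise check using $\boldsymbol{m}'\not\geq \boldsymbol{n}$, $\boldsymbol{m}''\not\geq \boldsymbol{n}$, and $\boldsymbol{m}\geq \boldsymbol{n}$ shows that both $\boldsymbol{n}',\boldsymbol{n}''$ are nonzero with $\boldsymbol{n}'\leq \boldsymbol{m}'$ and $\boldsymbol{n}''\leq \boldsymbol{m}''$. The induction hypothesis applied to $\boldsymbol{m}'\geq \boldsymbol{n}'$ and $\boldsymbol{m}''\geq \boldsymbol{n}''$ then forces $[\Pi(\boldsymbol{m}'),\Pi(\boldsymbol{m}'')]\subset [\Pi(\boldsymbol{n}'),\Pi(\boldsymbol{n}'')]\subset \Pi(\boldsymbol{n})$ directly from the definition of $\Pi(\boldsymbol{n})$.

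For (2), the inclusion $\Pi(m,1)\subset \Pi(m,0)$ is immediate from (3); the content lies in the reverse inclusion. The crucial structural lemma is that $\Pi(1,0)=\langle x\rangle\cdot \Pi(1,1)$ as closed subsets of $\Pi$: indeed $\Pi/\Pi(1,0)$ is pro-cyclic, hence abelian, so $\Pi(1,1)=\Pi(2)\subset \Pi(1,0)$, and the composite $\langle x\rangle\hookrightarrow \Pi(1,0)\twoheadrightarrow \Pi(1,0)/\Pi(1,1)\cong \mathbb{Z}_p$ is surjective. Granting this, for the base case $m=2$ one expands $[x^p\gamma_1, x^q\gamma_2]$ (with $\gamma_i\in \Pi(1,1)$) by standard commutator identities; each factor is a $\Pi$-conjugate of an element of $[\langle x\rangle,\Pi(1,1)]\subset \Pi(2,1)$, of $[\Pi(1,1),\Pi(1,1)]\subset \Pi(2,2)\subset \Pi(2,1)$ (using (3)), or is trivial, so $\Pi(2,0)\subset \Pi(2,1)$. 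For $m\geq 3$, one inducts on $m$: every decomposition $(m,0)=(m',0)+(m'',0)$ either has both $m',m''\geq 2$, so the bracket lies in $[\Pi(m',1),\Pi(m'',1)]\subset \Pi(m,2)\subset \Pi(m,1)$, or has $\min(m',m'')=1$, so the bracket lies in $[\Pi(1,0),\Pi(m-1,1)]\subset \Pi(m,1)$. The assertion for $\Pi(0,m)$ follows by symmetry in $\{x,y\}$. The hard step is precisely this base case $m=2$: it is the only place the argument leaves the purely formal recursive scheme and genuinely exploits that $\Pi$ is free of rank two on the specified basis $\{x,y\}$.
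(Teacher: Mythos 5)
Your proof is correct, and its main divergence from the paper is in the logical architecture of part (3). The paper argues (3) by trying to split $\boldsymbol{n}=\boldsymbol{n}'+\boldsymbol{n}''$ with $\boldsymbol{n}'\leq\boldsymbol{m}'$, $\boldsymbol{n}''\leq\boldsymbol{m}''$ and both parts nonzero, notes that such a splitting can fail in an exceptional configuration (one of the coordinate pairs being $(1,0)$), and disposes of that exceptional case by invoking part (2); thus in the paper (2) is proved first and (3) depends on it. You instead make (3) self-contained: when one of $\boldsymbol{m}',\boldsymbol{m}''$ already dominates $\boldsymbol{n}$ you finish by normality of $\Pi(\boldsymbol{n})$, and otherwise the coordinatewise minimum $\boldsymbol{n}'=(\min(m_i',n_i))_i$ is verified to give a splitting with both parts nonzero (and, as your case analysis implicitly shows, this branch is vacuous when $\lvert\boldsymbol{n}\rvert=1$, so the defining bracket relation for $\Pi(\boldsymbol{n})$ is always available). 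This reversal lets you use (3) inside (2) (for $\Pi(2,2)\subset\Pi(2,1)$ and $\Pi(m,2)\subset\Pi(m,1)$) with no circularity. For (1) and for the crucial base case $m=2$ of (2) you and the paper use the same idea — the paper phrases it as triviality of the induced commutator map on $\Pi(1,0)/\Pi(\boldsymbol{1})\times\Pi(1,0)/\Pi(\boldsymbol{1})$, you phrase it via $\Pi(1,0)=\langle x\rangle\cdot\Pi(\boldsymbol{1})$ and commutator identities; these are equivalent, both resting on $\Pi(1,0)/\Pi(\boldsymbol{1})$ being procyclic on the image of $x$. What each approach buys: the paper's (3) is shorter but leans on (2), i.e.\ on the structural input from the chosen basis, whereas your version isolates that input entirely in (2) and makes (3) a purely combinatorial consequence of the recursive definition; you also spell out the induction step for $m\geq3$ in (2), which the paper leaves implicit.
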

\begin{proof} 

(1) The assertion immediately follows by induction on $\lvert \boldsymbol{m} \rvert$.

(2) The inclusion $\Pi(m,1) \subset \Pi(m,0)$ follows by induction on $m$. To prove the opposite inclusion by induction on $m$, it suffices to prove the assertion for $m=2$. Note that the group $\Pi(2,0)/\Pi(2,1)$ is normally generated by the image of the commutator map 
\[
\Pi(1,0)/\Pi(\boldsymbol{1}) \times \Pi(1,0)/\Pi(\boldsymbol{1}) \to \Pi(2,0)/\Pi(2,1).
\]
However, since $\Pi(1,0)/\Pi(\boldsymbol{1})$ is generated by the image of $x$, the image of the commutator map is trivial. Hence the equality $\Pi(2,0)=\Pi(2,1)$ follows.

(3)  We prove the assertion by induction on $|\boldsymbol{m}|+|\boldsymbol{n}|$. If  $|\boldsymbol{m}|+|\boldsymbol{n}|=2$, then the assertion is clear. Let us assume that $|\boldsymbol{m}|+|\boldsymbol{n}|>2$. Since $\Pi(\boldsymbol{m})$ is normally generated by $[\Pi(\boldsymbol{m}'), \Pi(\boldsymbol{m}'')]$ with $\boldsymbol{m}'+\boldsymbol{m}''=\boldsymbol{m}$, the assertion follows if there exist $\boldsymbol{n}'$ and $\boldsymbol{n}''$ such that $\boldsymbol{m}' \geq \boldsymbol{n}'$,  $\boldsymbol{m}'' \geq \boldsymbol{n}''$ and $\boldsymbol{n}'+\boldsymbol{n}''=\boldsymbol{n}$, by induction hypothesis. Such a pair $(\boldsymbol{n}', \boldsymbol{n}'')$ clearly exists unless $(m_{1}, n_{1})$ or $(m_{2}, n_{2})$ is equal to $(1,0)$. However, the assertion in this exceptional case also follows by using (2).
\end{proof}

\begin{definition}

For $\mathbb{Z}_{\geq 0}^{2} \setminus \{ (0,0) \}$, we define graded quotients $\Gr^{\boldsymbol{m}}_{1} \Pi$ and $\Gr^{\boldsymbol{m}}_{2} \Pi$ of $\Pi$ as 
\[
\Gr^{\boldsymbol{m}}_{1} \Pi \coloneqq \Pi(\boldsymbol{m})/\Pi(\boldsymbol{m}+(1,0)) \quad \text{and} \quad
\Gr^{\boldsymbol{m}}_{2} \Pi \coloneqq \Pi(\boldsymbol{m})/\Pi(\boldsymbol{m}+(0,1)).
\] 
\end{definition}

Note that $\Gr^{\boldsymbol{m}}_{1} \Pi$ (resp. $\Gr^{\boldsymbol{m}}_{2} \Pi $) is a  $\mathbb{Z}_{p}[[\Pi/\Pi(1,0)]]$ (resp. $\mathbb{Z}_{p}[[\Pi/\Pi(0,1)]]$)-module where the group $\Pi/\Pi(1,0)$ (resp. $\Pi/\Pi(0,1)$) acts by conjugation.

\begin{example}
(1) $\Gr^{(1,0)}_{1} \Pi = \Pi(1,0)/\Pi(2,0)$ is a free $\mathbb{Z}_{p}[[\Pi/\Pi(1,0)]]$-module of rank one generated by $x$, cf. \cite[Theorem 2.2]{Ih1}. Similarly, $\Gr^{(0,1)}_{2} \Pi$ is a free $\mathbb{Z}_{p}[[\Pi/\Pi(0,1)]]$-module of rank one generated by $y$.

(2) $\Gr^{(1,0)}_{2} \Pi = \Pi(1,0)/\Pi(\boldsymbol{1})$ is a free $\mathbb{Z}_{p}$-module of rank one generated by the image of $x$ on which $\Pi/\Pi(1,0)$ acts trivially. Similarly,  $\Gr^{(0,1)}_{1} \Pi$ is generated by $y$ on which $\Pi/\Pi(0,1)$ acts trivially. On the other hand, by Lemma \ref{lmm:incl}(2), we have $\Gr^{(m,0)}_{2} \Pi = \Gr^{(0,m)}_{1} \Pi = 0$ for every $m \geq 2$.
\end{example}

\subsection{Two-variable filtration on pro-$p$ mapping class group}

In this subsection, we define a two-variable filtration on subgroups of the automorphism group of $\Pi$. The aim of this subsection is to prove Proposition \ref{prp:act_gamma_3}, which computes the action on the graded quotients associated to the two-variable filtration. We note that the content of this subsection may be regarded as a two-variable variant of a part of a work of Nakamura-Tsunogai \cite{NT}, studying the structure of graded quotients of pro-$p$ mapping class groups with respect to weight filtrations.

\begin{definition}
We define two subgroups $\tilde{\Gamma}$ and $\Gamma^{\dag}$ of $\Aut(\Pi)$ as 
\[
\tilde{\Gamma} \coloneqq \left\{ f \in \Aut(\Pi) 
\, \middle| \,
\begin{aligned}
&\text{$\bar{f}$ preserves $\langle \bar{x} \rangle$ and $\langle \bar{y} \rangle$ respectively, and}\\
&\text{$f$ preserves the conjugacy class of $\langle z \rangle$}
\end{aligned}
 \right\} \text{and}
\] 
\[
\Gamma^{\dag} \coloneqq \{ f \in \tilde{\Gamma} \mid \text{$f$ preserves $\langle z \rangle$} \}, 
\] where $\bar{f} \in \Aut(\Pi^{\ab})$ denotes the automorphism of $\Pi^{\ab}$ induced by $f$. We set $\Gamma \coloneqq \tilde{\Gamma}/\Inn(\Pi)$, which is a subgroup of $\Out(\Pi)$.
\end{definition}
 
One can easily observe that the subgroup $\Pi(\boldsymbol{m})$ defined in the last subsection is preserved under the action of $\tilde{\Gamma}$. Moreover, by the definition of $\tilde{\Gamma}$, there exist two characters\footnote{This notation is ambiguous since we already use the symbols $\chi_{1}$ and $\chi_{2}$ to indicate the characters $G_{K} \to \mathbb{Z}_{p}^{\times}$. However, if we identify $\Pi$ with the fundamental group $\Pi_{1,1}$ via $x=x_{1}$ and $y=x_{2}$, then the image of the pro-$p$ outer Galois representation $\rho_{X,p}$ is contained in $\Gamma$, and these notations become compatible.}
\[
\chi_{1} \colon \tilde{\Gamma} \to \Aut(\langle \bar{x} \rangle)=\mathbb{Z}_{p}^{\times}
\quad \text{and} \quad
 \chi_{2} \colon \tilde{\Gamma} \to \Aut(\langle \bar{y} \rangle)=\mathbb{Z}_{p}^{\times}
\] which are characterized by the equalities $\bar{f}(\bar{x})=\bar{x}^{\chi_{1}(f)}$ and $\bar{f}(\bar{y})=\bar{y}^{\chi_{2}(f)}$. These characters factor through the quotient $\Gamma$. There is also a character $\Gamma^{\dag} \to \Aut(\langle z \rangle)=\mathbb{Z}_{p}^{\times}$ by the definition of $\Gamma^{\dag}$, but this coincides with the product $\chi_{1}\chi_{2}$, since the commutator map $\Pi/\Pi(2) \times \Pi/\Pi(2) \to \Pi(2)/\Pi(3)$ is bilinear.

\begin{remark}\label{rmk:relation}
   If we identify $\Pi$ with the pro-$p$ fundamental group $\Pi_{1,1}$ of the once-punctured CM elliptic curve $X$ via $x=x_{1}$ and $y=x_{2}$, then $\tilde{\Gamma}$ (resp. $\Gamma^{\dag}$, $\Gamma$) is a subgroup of $\tilde{\Gamma}_{1,1}$ (resp. $\Gamma_{1,1}^{\dag}$, $\Gamma_{1,1}$) introduced in Section \ref{2.1}. Moreover, through this identification, the $G_{K}$-action preserves $\langle \bar{x} \rangle$ and $\langle \bar{y} \rangle$, since they are isomorphic to the $\p$-adic and the $\bar{\p}$-adic Tate module of $E$, respectively. In particular, the image of the pro-$p$ outer Galois representation $\rho_{X,p}:G_{K} \to \Gamma_{1,1}$ is contained in $\Gamma$.
\end{remark}

Recall that, in Section \ref{2.1}, the weight filtration $\{ F^{m}\tilde{\Gamma} \}_{m \geq 1}$ on $\tilde{\Gamma}$ is defined by 
\[
F^{m}\tilde{\Gamma} \coloneqq \Ker \left( \tilde{\Gamma} \to \Aut(\Pi/\Pi(m+1)) \right)
\] for $m \geq 1$. By replacing the descending central series with the two-variable filtration $\{ \Pi(\boldsymbol{m}) \}_{\boldsymbol{m}}$ on $\Pi$, we define a two-variable analogue of the weight filtration as follows.

\begin{definition}\label{dfn:gamma}
For every $\boldsymbol{m} \in \mathbb{Z}_{\geq 0}^{2} \setminus \{ (0,0) \}$, we define subgroups $F^{\boldsymbol{m}}\tilde{\Gamma}$, $F^{\boldsymbol{m}}\Gamma^{\dag}$ and $F^{\boldsymbol{m}}\Gamma$ by
\begin{align*}
& F^{\boldsymbol{m}}\tilde{\Gamma} \coloneqq \Ker \left( \tilde{\Gamma} \to \prod_{\boldsymbol{k} \in \{(0,1), (1,0)\} }\Aut \left( \Pi(\boldsymbol{k})/\Pi(\boldsymbol{m}+\boldsymbol{k}) \right) \right), \\
& F^{\boldsymbol{m}}\Gamma^{\dag} \coloneqq F^{\boldsymbol{m}}\Gamma \cap \Gamma^{\dag},
\quad \text{and} \quad F^{\boldsymbol{m}}\Gamma \coloneqq \Img(F^{\boldsymbol{m}}\tilde{\Gamma} \to \Gamma).
\end{align*}

Note that, since $[\Pi(\boldsymbol{m}), \Pi(\boldsymbol{k})] \subset \Pi(\boldsymbol{m}+\boldsymbol{k})$ for $\boldsymbol{k} \in \{(1,0), (0,1)\}$, the inner automorphism group $\Inn_{\Pi(\boldsymbol{m})}(\Pi)$ of $\Pi$ induced by elements of $\Pi(\boldsymbol{m})$ is contained in $F^{\boldsymbol{m}}\tilde{\Gamma}$. 
\end{definition}

Note that the subgroup $F^{\boldsymbol{m}}\tilde{\Gamma}$ acts trivially on the quotient $\Pi/ \left( \Pi(\boldsymbol{m}) \cap \Pi(|\boldsymbol{m}|+1) \right)$ by definition and Lemma \ref{lmm:incl}, hence on $\Pi/ \Pi(|\boldsymbol{m}|+1)$. We therefore obtain the following comparison with the usual weight filtration:
    \[
        F^{\boldsymbol{m}}\tilde{\Gamma} \subset F^{|\boldsymbol{m}|}\tilde{\Gamma}.
    \] The point of Definition \ref{dfn:gamma} is to impose conditions on the images of $x$ and $y$ separately, i.e. $f(x)x^{-1} \in \Pi(\boldsymbol{m}+(1,0))$ and $f(y)y^{-1} \in \Pi(\boldsymbol{m}+(0,1))$ for each $f \in F^{\boldsymbol{m}}\tilde{\Gamma}$. In fact, the following lemma characterizes elements of $F^{\boldsymbol{m}}\tilde{\Gamma}$:

\begin{lemma}\label{lmm:contain}
Let $\boldsymbol{m} \in \mathbb{Z}_{\geq 0}^{2} \setminus \{ (0,0) \}$ and $f \in \tilde{\Gamma}$. Then the automorphism $f$ is contained in $F^{\boldsymbol{m}}\tilde{\Gamma}$ if and only if $f(x)x^{-1} \in \Pi(\boldsymbol{m}+(1,0))$ and $f(y)y^{-1} \in \Pi(\boldsymbol{m}+(0,1))$. 
\end{lemma}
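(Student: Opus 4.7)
The ``only if'' direction is immediate from the definition of $F^{\boldsymbol{m}}\tilde{\Gamma}$: since $x \in \Pi(1,0)$ and, by hypothesis, $f$ acts trivially on $\Pi(1,0)/\Pi(\boldsymbol{m}+(1,0))$, the class of $x$ in this quotient is fixed, giving $f(x)x^{-1} \in \Pi(\boldsymbol{m}+(1,0))$; the statement for $y$ is symmetric.

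For the converse, suppose $c \coloneqq f(x)x^{-1} \in \Pi(\boldsymbol{m}+(1,0))$ and $d \coloneqq f(y)y^{-1} \in \Pi(\boldsymbol{m}+(0,1))$. I plan to treat the two quotients $\Pi(\boldsymbol{k})/\Pi(\boldsymbol{m}+\boldsymbol{k})$ for $\boldsymbol{k} \in \{(1,0),(0,1)\}$ symmetrically, so I focus on $\boldsymbol{k}=(1,0)$. A first, coarser observation will be useful: both $\Pi(\boldsymbol{m}+(1,0))$ and $\Pi(\boldsymbol{m}+(0,1))$ are contained in $\Pi(\boldsymbol{m})$ by Lemma \ref{lmm:incl}(3), so the induced automorphism of $\Pi/\Pi(\boldsymbol{m})$ fixes the classes of the topological generators $x$ and $y$, hence is the identity by continuity. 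This yields the uniform control $f(g)g^{-1} \in \Pi(\boldsymbol{m})$ for every $g \in \Pi$.

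With this in hand, I would verify $f(w)w^{-1} \in \Pi(\boldsymbol{m}+(1,0))$ first for $w = gxg^{-1}$, $g \in \Pi$. Writing $f(g)=gh$ with $h \in \Pi(\boldsymbol{m})$ and using the identity $xh^{-1}x^{-1}=h^{-1}[h,x]$, a short manipulation produces
\[
f(gxg^{-1})\,(gxg^{-1})^{-1} \;=\; g\cdot(hch^{-1})\cdot [h,x]\cdot g^{-1}.
\]
Now $hch^{-1} \in \Pi(\boldsymbol{m}+(1,0))$ by normality, while $[h,x] \in [\Pi(\boldsymbol{m}),\Pi(1,0)] \subset \Pi(\boldsymbol{m}+(1,0))$ directly from the inductive definition of the filtration; hence the right-hand side lies in $\Pi(\boldsymbol{m}+(1,0))$. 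To promote this to all of $\Pi(1,0)$, note that the set $\{w \in \Pi \mid f(w)w^{-1} \in \Pi(\boldsymbol{m}+(1,0))\}$ is closed under multiplication and inversion (normality of $\Pi(\boldsymbol{m}+(1,0))$) and topologically closed in $\Pi$ (continuity of $f$); since $\Pi(1,0)$ is the closed normal subgroup of $\Pi$ generated by $x$, it is contained in this set. Applying the same argument with $x,y$ and $(1,0),(0,1)$ interchanged handles the quotient $\Pi(0,1)/\Pi(\boldsymbol{m}+(0,1))$, and concludes that $f \in F^{\boldsymbol{m}}\tilde{\Gamma}$.

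I do not anticipate a serious obstacle: the core of the argument is a direct commutator computation together with the inclusion $[\Pi(\boldsymbol{m}),\Pi(\boldsymbol{k})] \subset \Pi(\boldsymbol{m}+\boldsymbol{k})$ built into the definition of the two-variable filtration. The only step requiring a moment of care is the preliminary reduction $f(g) \equiv g \pmod{\Pi(\boldsymbol{m})}$, which is what converts the hypotheses on the two specific generators $x,y$ into the uniform control over the twist $g^{-1}f(g)$ that shows up when one conjugates an arbitrary element.
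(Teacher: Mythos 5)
Your proof is correct and follows essentially the same route as the paper: both reduce the ``if'' direction to checking triviality on conjugates of $x$ (resp.\ $y$) generating $\Pi(1,0)$ (resp.\ $\Pi(0,1)$) and absorb the discrepancy into a commutator lying in $[\Pi(\boldsymbol{m}'),\Pi(\boldsymbol{m}'')]\subset\Pi(\boldsymbol{m}'+\boldsymbol{m}'')$. The only cosmetic difference is that the paper works with the specific generators $y^{n}xy^{-n}$ and the congruence $[f(y)^{-n}y^{n},f(x)]\in[\Pi(\boldsymbol{m}+(0,1)),\Pi(1,0)]$, whereas you first establish the uniform control $f(g)g^{-1}\in\Pi(\boldsymbol{m})$ and then handle arbitrary conjugates via $[h,x]\in[\Pi(\boldsymbol{m}),\Pi(1,0)]$.
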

\begin{proof}
It suffices to prove only the ``if'' part of the assertion. Since the group $\Pi(1,0)$ is generated by $\{ y^nxy^{-n} \}_{n \geq 0}$, it suffices to prove that $f(y^{n}xy^{-n})y^{n}x^{-1}y^{-n} \in \Pi(\boldsymbol{m}+(1,0))$ for every $n \geq 0$. We compute this term as follows.
\begin{align*}
f(y^{n}xy^{-n})y^{n}x^{-1}y^{-n} &= f(y)^{n} f(x) f(y)^{-n}y^{n} x^{-1} y^{-n} \\
&\equiv f(y)^{n} f(y)^{-n}y^{n} f(x) x^{-1} y^{-n} \bmod \Pi(\boldsymbol{m}+(1,0)) \\
&= y^{n}  f(x) x^{-1} y^{-n} \equiv 1.
\end{align*}
Here, we use $[f(y)^{-n}y^{n}, f(x)] \in [\Pi(\boldsymbol{m}+(0,1)), \Pi(1,0)] \subset \Pi(\boldsymbol{m}+\boldsymbol{1}) \subset \Pi(\boldsymbol{m}+(1,0))$ to establish the first congruence. A similar computation shows that $f$ also acts trivially on $\Pi(0,1)/\Pi(\boldsymbol{m}+(0,1))$. 
\end{proof}

Moreover, we have the following lemma:

\begin{lemma}\label{lmm:trivial_action}
Let $\boldsymbol{m} \in \mathbb{Z}_{\geq 0}^{2} \setminus \{ (0,0) \}$. Then $F^{\boldsymbol{m}}\tilde{\Gamma}$ acts trivially on $\Pi(\boldsymbol{n})/\Pi(\boldsymbol{n}+\boldsymbol{m})$ for every $\boldsymbol{n} \in \mathbb{Z}_{\geq 0}^{2} \setminus \{ (0,0) \}$. 
\end{lemma}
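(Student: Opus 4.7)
The plan is to argue by induction on $|\boldsymbol{n}|$. The base case $|\boldsymbol{n}|=1$, i.e.\ $\boldsymbol{n}\in\{(1,0),(0,1)\}$, is exactly the content of the definition of $F^{\boldsymbol{m}}\tilde{\Gamma}$ combined with the preceding lemma. For the inductive step, fix $\boldsymbol{n}$ with $|\boldsymbol{n}|\geq 2$; every such $\boldsymbol{n}$ admits a decomposition $\boldsymbol{n}=\boldsymbol{n}'+\boldsymbol{n}''$ with $\boldsymbol{n}',\boldsymbol{n}''\in\mathbb{Z}_{\geq 0}^{2}\setminus\{(0,0)\}$, and then automatically $1\leq|\boldsymbol{n}'|,|\boldsymbol{n}''|<|\boldsymbol{n}|$. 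By definition, $\Pi(\boldsymbol{n})$ is the normal closure of the union of $[\Pi(\boldsymbol{n}'),\Pi(\boldsymbol{n}'')]$ taken over all such decompositions.

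Fix $f\in F^{\boldsymbol{m}}\tilde{\Gamma}$ and set
\[
N_{\boldsymbol{n}}:=\{p\in\Pi(\boldsymbol{n})\mid f(p)\equiv p\pmod{\Pi(\boldsymbol{n}+\boldsymbol{m})}\}.
\]
I would show (i) $N_{\boldsymbol{n}}$ is a closed subgroup of $\Pi(\boldsymbol{n})$, (ii) $N_{\boldsymbol{n}}$ contains every commutator $[a,b]$ with $a\in\Pi(\boldsymbol{n}')$ and $b\in\Pi(\boldsymbol{n}'')$, and (iii) $N_{\boldsymbol{n}}$ is stable under conjugation by arbitrary elements of $\Pi$; together these force $N_{\boldsymbol{n}}=\Pi(\boldsymbol{n})$, which is the desired conclusion. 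The subgroup property (i) follows immediately from the normality of $\Pi(\boldsymbol{n}+\boldsymbol{m})$ in $\Pi$. For (ii), the inductive hypothesis applied to $\boldsymbol{n}'$ and $\boldsymbol{n}''$ yields $u:=a^{-1}f(a)\in\Pi(\boldsymbol{n}'+\boldsymbol{m})$ and $v:=b^{-1}f(b)\in\Pi(\boldsymbol{n}''+\boldsymbol{m})$; expanding $[au,bv]$ via the standard identities $[xy,z]=y^{-1}[x,z]y\cdot[y,z]$ and $[x,yz]=[x,z]\cdot z^{-1}[x,y]z$ reduces the claim to checking that each of $[u,bv]$, $[a,v]$, $[[a,b],v]$, $[[a,b],u^{-1}]$ lies in $\Pi(\boldsymbol{n}+\boldsymbol{m})$, and each such term is a commutator of the shape $[\Pi(\boldsymbol{k}_{1}),\Pi(\boldsymbol{k}_{2})]$ with $\boldsymbol{k}_{1}+\boldsymbol{k}_{2}\geq\boldsymbol{n}+\boldsymbol{m}$, hence contained in $\Pi(\boldsymbol{n}+\boldsymbol{m})$ by Lemma \ref{lmm:incl} (3).

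The main obstacle is (iii): a priori there is no reason for $r_{g}:=g^{-1}f(g)$ to lie in any particular piece of the two-variable filtration when $g\notin\{x,y\}$. To handle this I would first establish the auxiliary lemma that $r_{g}\in\Pi(\boldsymbol{m})$ for every $g\in\Pi$. Indeed, the set $\{g\in\Pi\mid g^{-1}f(g)\in\Pi(\boldsymbol{m})\}$ is a closed subgroup of $\Pi$---closure under multiplication uses only the normality of $\Pi(\boldsymbol{m})$---and it contains $x$ and $y$ by the preceding lemma, hence equals $\Pi$. Granted this, for $p\in N_{\boldsymbol{n}}$ and $g\in\Pi$, writing $s:=p^{-1}f(p)\in\Pi(\boldsymbol{n}+\boldsymbol{m})$, a direct calculation gives
\[
f(gpg^{-1})(gpg^{-1})^{-1}=g\bigl([r_{g}^{-1},p^{-1}]\cdot p(r_{g}sr_{g}^{-1})p^{-1}\bigr)g^{-1}.
\]
The commutator $[r_{g}^{-1},p^{-1}]$ lies in $[\Pi(\boldsymbol{m}),\Pi(\boldsymbol{n})]\subset\Pi(\boldsymbol{n}+\boldsymbol{m})$, while the other factor is a conjugate of $r_{g}sr_{g}^{-1}\in\Pi(\boldsymbol{n}+\boldsymbol{m})$, so the whole expression lies in $\Pi(\boldsymbol{n}+\boldsymbol{m})$ by normality. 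This gives $gpg^{-1}\in N_{\boldsymbol{n}}$ and completes the induction.
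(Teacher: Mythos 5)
Your argument is correct, and it is exactly the induction on $\lvert \boldsymbol{n} \rvert$ that the paper invokes (the paper's proof is a one-line appeal to this induction); your write-up simply supplies the details — the base case from the definition of $F^{\boldsymbol{m}}\tilde{\Gamma}$, the commutator estimates via $[\Pi(\boldsymbol{k}_{1}),\Pi(\boldsymbol{k}_{2})]\subset\Pi(\boldsymbol{k}_{1}+\boldsymbol{k}_{2})$ together with Lemma \ref{lmm:incl}, and the auxiliary observation $g^{-1}f(g)\in\Pi(\boldsymbol{m})$ for all $g\in\Pi$ needed to handle the normal closure.
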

\begin{proof}
We prove the assertion by induction on $\lvert \boldsymbol{n} \rvert$. First, if $\lvert \boldsymbol{n} \rvert=1$, the assertion follows by the definition of $F^{\boldsymbol{m}}\tilde{\Gamma}$. In general, since $\Pi(\boldsymbol{n})$ is generated by subgroups $[\Pi(\boldsymbol{n}'), \Pi(\boldsymbol{n}'')]$ such that $\boldsymbol{n}'+\boldsymbol{n}''=\boldsymbol{n}$ and $0<|\boldsymbol{n}'|, |\boldsymbol{n}''|<|\boldsymbol{n}|$, the assertion follows by the fact that the commutator map
\[
\Pi(\boldsymbol{n}')/\Pi(\boldsymbol{n}'+\boldsymbol{m}) \times \Pi(\boldsymbol{n}'')/\Pi(\boldsymbol{n}'+\boldsymbol{m}) \to \Pi(\boldsymbol{n})/\Pi(\boldsymbol{n}+\boldsymbol{m})
\] is compatible with the action of $\tilde{\Gamma}$.
\end{proof}

Similar to the case of $\Pi$, we consider two-variable graded quotients as follows.

\begin{definition} For $\boldsymbol{m} \in \mathbb{Z}_{\geq 0}^{2} \setminus \{ (0,0) \}$, we define $\Gr^{\boldsymbol{m}}_{1} \tilde{\Gamma}$ and $\Gr^{\boldsymbol{m}}_{2} \tilde{\Gamma}$ by 
\[
\Gr^{\boldsymbol{m}}_{1} \tilde{\Gamma} \coloneqq F^{\boldsymbol{m}}\tilde{\Gamma}/F^{\boldsymbol{m}+(1,0)}\tilde{\Gamma} \quad \text{and} \quad \Gr^{\boldsymbol{m}}_{2} \tilde{\Gamma} \coloneqq F^{\boldsymbol{m}}\tilde{\Gamma}/F^{\boldsymbol{m}+(0,1)}\tilde{\Gamma}.
\] Similarly, we define $\Gr^{\boldsymbol{m}}_{1} \Gamma$ and $\Gr^{\boldsymbol{m}}_{2} \Gamma$ by
\[
    \Gr^{\boldsymbol{m}}_{1} \Gamma \coloneqq F^{\boldsymbol{m}}\Gamma/F^{\boldsymbol{m}+(1,0)}\Gamma \quad \text{and} \quad \Gr^{\boldsymbol{m}}_{2} \Gamma \coloneqq F^{\boldsymbol{m}}\Gamma/F^{\boldsymbol{m}+(0,1)}\Gamma.
\] 
\end{definition}

Note that, by definition, we have natural surjective homomorphisms
\[
 \Gr^{\boldsymbol{m}}_{1} \tilde{\Gamma} \to \Gr^{\boldsymbol{m}}_{1} \Gamma \quad \text{and} \quad \Gr^{\boldsymbol{m}}_{2} \tilde{\Gamma} \to \Gr^{\boldsymbol{m}}_{2} \Gamma.
\]
The following homomorphisms are useful in studying these quotients.

\begin{definition}
For $\boldsymbol{m} \in \mathbb{Z}_{\geq 0}^{2} \setminus \{ (0,0) \}$, we define homomorphisms $i_{\boldsymbol{m},1}$ and $i_{\boldsymbol{m},2}$ by 
\begin{align*}
&i_{\boldsymbol{m},1} \colon \Gr^{\boldsymbol{m}}_{1} \tilde{\Gamma} \to \Gr^{\boldsymbol{m}+(1,0)}_{1} \Pi \oplus \Gr^{\boldsymbol{m}+(0,1)}_{1} \Pi \colon f \mapsto (f(x)x^{-1}, f(y)y^{-1}) \quad \text{and} \\
&i_{\boldsymbol{m},2} \colon \Gr^{\boldsymbol{m}}_{2} \tilde{\Gamma} \to \Gr^{\boldsymbol{m}+(1,0)}_{2} \Pi \oplus \Gr^{\boldsymbol{m}+(0,1)}_{2} \Pi \colon f \mapsto (f(x)x^{-1}, f(y)y^{-1}).
\end{align*}
\end{definition}

Note that Lemma \ref{lmm:contain} implies that both $i_{\boldsymbol{m},1}$ and $i_{\boldsymbol{m}, 2}$ are injective. 

\begin{remark}
    Two homomorphisms $i_{\boldsymbol{m},1}$ and $i_{\boldsymbol{m},2}$ defined above are analogues of the homomorphism $\tilde{\iota}_{m}$ defined in \cite[(1.11.2)]{NT}, which relates the (one-variable) graded quotients of the pro-$p$ mapping class groups with those of the pro-$p$ fundamental groups.
\end{remark}

Write
\[
\tilde{\Gamma}_{1} \coloneqq \{ \gamma \in \tilde{\Gamma} \mid \gamma(y)y^{-1} \in \Pi(2) \} \quad \text{and} \quad 
\tilde{\Gamma}_{2} \coloneqq \{ \gamma \in \tilde{\Gamma} \mid \gamma(x)x^{-1} \in \Pi(2) \}.
\] Moreover, we define  
\begin{align*}
& \Gamma^{\dag}_{1} \coloneqq \tilde{\Gamma}_{1} \cap \Gamma^{\dag}, \quad \Gamma_{1} \coloneqq \Img(\tilde{\Gamma}_{1} \to \Gamma), \\
& \Gamma^{\dag}_{2} \coloneqq \tilde{\Gamma}_{2} \cap \Gamma^{\dag}, \quad \Gamma_{2} \coloneqq \Img(\tilde{\Gamma}_{2} \to \Gamma).
\end{align*} Then the action of $\tilde{\Gamma}_{1}/F^{(1,0)}\tilde{\Gamma}$ (resp. $\tilde{\Gamma}_{2}/F^{(0,1)}\tilde{\Gamma}$) on $\Gr^{\boldsymbol{m}}_{1} \Pi $ (resp. $\Gr^{\boldsymbol{m}}_{2} \Pi$) commutes with the action of $\Pi/\Pi(1,0)$ (resp. $\Pi/\Pi(0,1)$). In the following, we study the action of $\Gamma^{\dag}_{1}/F^{(1,0)}\Gamma^{\dag}$ (resp.  $\Gamma^{\dag}_{2}/F^{(0,1)}\Gamma^{\dag}$) on $\Gr^{\boldsymbol{m}}_{1}$ (resp. $\Gr^{\boldsymbol{m}}_{2}$) of various groups.  

\begin{lemma}\label{lmm:act_gamma}
    The element $\gamma(x)x^{-\chi_{1}(\gamma)}$ is contained in $\Pi(2,0)$ for every $\gamma \in \Gamma^{\dagger}_{1}$. Similarly, $\gamma(y)y^{-\chi_{2}(\gamma)}$ is contained in $\Pi(0,2)$ for every $\gamma \in \Gamma^{\dag}_{2}$.
\end{lemma}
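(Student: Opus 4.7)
The plan is to prove the first statement---$\gamma(x) x^{-\chi_{1}(\gamma)} \in \Pi(2,0)$ for $\gamma \in \Gamma^{\dag}_{1}$---and then obtain the second by exchanging the roles of $x$ and $y$ (and of $\chi_{1}$ and $\chi_{2}$). The strategy is to work inside the abelian quotient $M \coloneqq \Pi(1,0)/\Pi(2,0)$. By the example following the definition of $\gr^{\boldsymbol{m}}_{i} \Pi$, this is a free $\mathbb{Z}_{p}[[\Pi/\Pi(1,0)]]$-module of rank one generated by the image $\bar{x}$ of $x$; since $\Pi/\Pi(1,0)$ is topologically generated by the image of $y$, setting $T \coloneqq y - 1$ identifies $M$ with $\mathbb{Z}_{p}[[T]] \bar{x}$.

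First I will extract two consequences of the hypotheses on $\gamma \in \Gamma^{\dag}_{1} = \Gamma^{\dag} \cap \tilde{\Gamma}_{1}$. The relation $\gamma(y) y^{-1} \in \Pi(2)$ projects in $\Pi^{\mathrm{ab}}$ to $(\chi_{2}(\gamma) - 1)\bar{y} = 0$, forcing $\chi_{2}(\gamma) = 1$; together with the fact that $\Gamma^{\dag}$ acts on $\langle z \rangle$ through the character $\chi_{1}\chi_{2}$, this gives
\[
\gamma(z) = z^{\chi_{1}(\gamma)}.
\]
Next I claim that $\gamma$ induces a $\mathbb{Z}_{p}[[T]]$-linear endomorphism of $M$. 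Since $\Pi(2) = \Pi(\boldsymbol{1}) \subset \Pi(1,0)$ by Lemma \ref{lmm:incl}(3), the relation $\gamma(y) \in y \Pi(2)$ already implies $\gamma(y) \equiv y \bmod \Pi(1,0)$, so $\gamma(y)$ and $y$ induce the same operator on $M$; in particular $\gamma$ commutes with the $T$-action. Together with $\mathbb{Z}_{p}$-linearity this forces
\[
\gamma(\bar{x}) = \lambda \bar{x} \qquad \text{for some } \lambda \in \mathbb{Z}_{p}[[T]].
\]

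Finally I compute both sides of $\gamma(z) = z^{\chi_{1}(\gamma)}$ in $M$ written additively. From $z = y x y^{-1} x^{-1}$ one obtains $\bar{z} = (y - 1)\bar{x} = T \bar{x}$, hence $z^{\chi_{1}(\gamma)}$ maps to $\chi_{1}(\gamma) T \bar{x}$; on the other hand, $\gamma(z) = [\gamma(y), \gamma(x)]$ maps to $(\gamma(y) - 1)\gamma(\bar{x}) = T \lambda \bar{x}$, where we used that $\gamma(y)$ and $y$ act identically. Equating these elements in the free module $\mathbb{Z}_{p}[[T]] \bar{x}$ and using that $\mathbb{Z}_{p}[[T]]$ is an integral domain yields $T\lambda = \chi_{1}(\gamma) T$, so $\lambda = \chi_{1}(\gamma)$, which is exactly $\gamma(x) \equiv x^{\chi_{1}(\gamma)} \bmod \Pi(2,0)$. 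The main technical step is the $\mathbb{Z}_{p}[[T]]$-linearity above, which ultimately rests on the equality $\Pi(2,0) = \Pi(2,1)$ from Lemma \ref{lmm:incl}(2); once that is in hand, the rest is a one-line calculation.
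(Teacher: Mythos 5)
Your proof is correct and follows essentially the same route as the paper: both identify $\Pi(1,0)/\Pi(2,0)$ as the free rank-one $\mathbb{Z}_{p}[[\Pi/\Pi(1,0)]]$-module on $x$, observe that $\gamma \in \Gamma^{\dag}_{1}$ acts linearly (hence by a scalar), and pin the scalar down to $\chi_{1}(\gamma)$ using $\gamma(z)=z^{\chi_{1}(\gamma)}$ and $z=(y-1)x \neq 0$. One small slip: the linearity rests on $\Pi(2)=\Pi(\boldsymbol{1})\subset\Pi(1,0)$ (which you invoke correctly earlier), not on the equality $\Pi(2,0)=\Pi(2,1)$ from Lemma \ref{lmm:incl}(2) cited in your closing remark.
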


\begin{proof}
We only prove the first assertion. Since $\Gamma^{\dag}_{1}$ acts $\mathbb{Z}_{p}[[\Pi/\Pi(1,0)]]$-linearly on $\Pi(1,0)/\Pi(2,0)$, which is a free $\mathbb{Z}_{p}[[\Pi/\Pi(1,0)]]$-module generated by $x$,
 the action of $\gamma$ is given by a scalar multiplication by an element of $\mathbb{Z}_{p}[[\Pi/\Pi(1,0)]]^{\times}$. However, since we have $0 \neq z=[y,x]=(y-1)x$ in $\Pi(1,0)/\Pi(2,0)$ and $\gamma(z)=z^{\chi_{1}(\gamma)}$, such a scalar must coincide with $\chi_{1}(\gamma)$ as desired.
 \end{proof}

\begin{lemma}\label{lmm:act_gamma_2}
Let $\boldsymbol{m}=(m_{1}, m_{2}) \in \mathbb{Z}_{\geq 0}^{2} \setminus \{ (0,0) \}$. Then the action of $\Gamma^{\dag}_{1}/F^{(1,0)}\Gamma^{\dag}$ on  $\Gr^{\boldsymbol{m}}_{1} \Pi$ is given by the character $\chi_{1}^{m_{1}}$. Similarly, the action of $\Gamma^{\dag}_{2}/F^{(0,1)}\Gamma^{\dag}$ on $\Gr^{\boldsymbol{m}}_{2} \Pi$ is given by $\chi_{2}^{m_{2}}$. 
\end{lemma}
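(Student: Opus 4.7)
The plan is to prove the first assertion by induction on $|\boldsymbol{m}|$ that every $\gamma\in\Gamma^\dag_1$ acts on $\gr^{\boldsymbol{m}}_1\Pi$ as multiplication by $\chi_1(\gamma)^{m_1}$; the second assertion follows by the symmetry $x\leftrightarrow y$. Throughout, I use that every $\gamma\in\Gamma^\dag_1\subset\tilde{\Gamma}_1$ satisfies $\gamma(y)\equiv y\bmod\Pi(2)\subset\Pi(1,0)$, so $\gamma$ acts trivially on the cyclic quotient $\Pi/\Pi(1,0)$ (topologically generated by $\bar{y}$), and hence $\mathbb{Z}_p[[\Pi/\Pi(1,0)]]$-linearly on each graded piece $\gr^{\boldsymbol{m}}_1\Pi$. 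The two base cases are immediate: $\gr^{(1,0)}_1\Pi$ is the cyclic $\mathbb{Z}_p[[\Pi/\Pi(1,0)]]$-module generated by $x$, and Lemma \ref{lmm:act_gamma} pins the scalar down to $\chi_1(\gamma)$; while $\gr^{(0,1)}_1\Pi=\Pi(0,1)/\Pi(2)$ is the cyclic module generated by $\bar{y}\in\Pi^{\mathrm{ab}}$ on which $\gamma$ acts trivially, matching $\chi_1(\gamma)^{0}$.

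For the inductive step with $|\boldsymbol{m}|\geq 2$, first observe that if $m_1=0$, then $\boldsymbol{m}=(0,m_2)$ with $m_2\geq 2$ and Lemma \ref{lmm:incl}(2) forces $\gr^{\boldsymbol{m}}_1\Pi=0$, leaving nothing to prove. So one may assume $m_1\geq 1$; then Lemma \ref{lmm:incl}(3) gives $\Pi(\boldsymbol{m})\subset\Pi(1,0)$, so $[\Pi(1,0),\Pi(\boldsymbol{m})]\subset\Pi(\boldsymbol{m}+(1,0))$, and hence $\gr^{\boldsymbol{m}}_1\Pi$ is abelian with trivial $\Pi(1,0)$-conjugation action. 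By definition of the two-variable filtration, $\gr^{\boldsymbol{m}}_1\Pi$ is generated as a $\mathbb{Z}_p[[\Pi/\Pi(1,0)]]$-module by the classes of commutators $[a,b]$ ranging over all decompositions $\boldsymbol{m}=\boldsymbol{m}'+\boldsymbol{m}''$ with $|\boldsymbol{m}'|,|\boldsymbol{m}''|\geq 1$, $a\in\Pi(\boldsymbol{m}')$ and $b\in\Pi(\boldsymbol{m}'')$. By the induction hypothesis I write $\gamma(a)=a^{\chi_1(\gamma)^{m_1'}}c_a$ and $\gamma(b)=b^{\chi_1(\gamma)^{m_1''}}c_b$ with $c_a\in\Pi(\boldsymbol{m}'+(1,0))$ and $c_b\in\Pi(\boldsymbol{m}''+(1,0))$; an expansion of $[\gamma(a),\gamma(b)]$ via standard commutator identities, together with the fact that any bracket involving $c_a$ or $c_b$ lies in $\Pi(\boldsymbol{m}+(1,0))$, then yields $\gamma([a,b])\equiv[a^{\chi_1(\gamma)^{m_1'}},b^{\chi_1(\gamma)^{m_1''}}]\equiv[a,b]^{\chi_1(\gamma)^{m_1}}$ modulo $\Pi(\boldsymbol{m}+(1,0))$, as desired.

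The step I expect to be the main obstacle is the last simplification $[a^n,b^k]\equiv[a,b]^{nk}$ in $\gr^{\boldsymbol{m}}_1\Pi$, which reduces via the standard expansions $[a^n,b]=\prod_{i=0}^{n-1}[a,b]^{a^i}$ and analogously for $b$ to showing that both $a$- and $b$-conjugation are trivial on $\gr^{\boldsymbol{m}}_1\Pi$, i.e., to $a,b\in\Pi(1,0)$; this is automatic whenever $m_1',m_1''\geq 1$. The remaining configurations are $\boldsymbol{m}'=(0,1)$ (symmetrically $\boldsymbol{m}''=(0,1)$), where $a\in\Pi(0,1)\not\subset\Pi(1,0)$ and $a$-conjugation may act nontrivially. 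This edge case is nevertheless harmless because the $\boldsymbol{m}=(0,1)$ base case forces $\chi_1(\gamma)^{m_1'}=\chi_1(\gamma)^{0}=1$, so no exponentiation of $a$ is required; only the expansion in the $b$-variable is needed, and since $m_1''=m_1\geq 1$ ensures $b\in\Pi(1,0)$, the identity $[a,b^k]\equiv k[a,b]$ holds modulo $\Pi(\boldsymbol{m}+(1,0))$. This closes the induction.
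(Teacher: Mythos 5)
Your proof is correct and follows essentially the same route as the paper's: induction on $|\boldsymbol{m}|$, with the base cases supplied by Lemma \ref{lmm:act_gamma} (and the definition of $\tilde{\Gamma}_{1}$ for $\boldsymbol{m}=(0,1)$), and the inductive step via the generation of $\gr^{\boldsymbol{m}}_{1}\Pi$ by commutator classes together with (bi)homogeneity of the bracket modulo $\Pi(\boldsymbol{m}+(1,0))$. Your explicit treatment of the $\boldsymbol{m}'=(0,1)$ decomposition is in fact more careful than the paper, which simply asserts that the graded pairing is bilinear -- a claim that is not literally true in the $(0,1)$-variable (e.g. $[y^{2},x]\not\equiv[y,x]^{2}$ in $\gr^{(1,1)}_{1}\Pi$, the discrepancy being $[y,[y,x]]$) but is harmless for exactly the reason you give, namely that the relevant scalar $\chi_{1}^{0}(\gamma)=1$ there, so only homogeneity in the other variable (where $m_{1}''=m_{1}\geq 1$ puts the entry in $\Pi(1,0)$) is needed.
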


\begin{proof}
Fix an arbitrary element $\gamma \in \Gamma^{\dag}_{1}$. We show that the action of $\gamma$ on $\Gr^{\boldsymbol{m}}_{1} \Pi$ is given by multiplication by $\chi_{1}^{m_{1}}(\gamma)$ by induction on $|\boldsymbol{m}|$. If $|\boldsymbol{m}|=1$, the assertion follows from (the proof of) Lemma \ref{lmm:act_gamma}. In general, the graded quotient $\Gr^{\boldsymbol{m}}_{1} \Pi$ is generated (as a $\mathbb{Z}_{p}[[\Pi/\Pi(1,0)]]$-module) by the image of commutator maps
\[
[ \; \cdot \; , \; \cdot \; ] \colon \Gr^{\boldsymbol{m}'}_{1} \Pi \times \Gr^{\boldsymbol{m}''}_{1} \Pi \to \Gr^{\boldsymbol{m}}_{1} \Pi,
\] where $\boldsymbol{m}'=(m_{1}',m_{2}')$ and $\boldsymbol{m}''=(m_{1}'',m_{2}'')$ satisfy $\boldsymbol{m}'+\boldsymbol{m}''=\boldsymbol{m}$. Since this pairing is bilinear, for every $(\tau',\tau'') \in \Gr^{\boldsymbol{m}'}_{1} \Pi \times \Gr^{\boldsymbol{m}''}_{1} \Pi$, we have
\[
\gamma([\tau',\tau''])=[\gamma(\tau'),\gamma(\tau'')]=[\chi_{1}^{m'_{1}}(f)\tau', \chi_{1}^{m''_{1}}(\gamma)\tau'']=\chi_{1}^{m_{1}}(\gamma)[\tau',\tau'']
\] by using the induction hypothesis. This concludes the proof.
\end{proof}

We now prove the main result in this subsection.

\begin{proposition}\label{prp:act_gamma_3}
For every $\boldsymbol{m}=(m_{1},m_{2}) \in \mathbb{Z}_{\geq 0}^{2} \setminus \{ (0,0) \}$, the homomorphism 
\[
i_{\boldsymbol{m},1} \colon \Gr^{\boldsymbol{m}}_{1} \tilde{\Gamma} \to \Gr^{\boldsymbol{m}+(1,0)}_{1} \Pi(\chi_{1}^{-1}) \oplus \Gr^{\boldsymbol{m}+(0,1)}_{1} \Pi 
\] is compatible with the action of $\Gamma^{\dag}_{1}/F^{(1,0)}\Gamma^{\dag}$, where $\Gr^{\boldsymbol{m}+(1,0)}_{1} \Pi (\chi_{1}^{-1})$ denotes the $\chi_{1}^{-1}$-twist of $\Gr^{\boldsymbol{m}+(1,0)}_{1} \Pi$. Similarly, the homomorphism
\[
i_{\boldsymbol{m},2} \colon \Gr^{\boldsymbol{m}}_{2} \tilde{\Gamma}  \to \Gr^{\boldsymbol{m}+(1,0)}_{2} \Pi \oplus \Gr^{\boldsymbol{m}+(0,1)}_{2} \Pi(\chi_{2}^{-1})
\] is compatible with the action of $\Gamma^{\dag}_{2}/F^{(0,1)}\Gamma^{\dag}$. In particular, the action of $\Gamma^{\dag}_{1}/F^{(1,0)}\Gamma^{\dag}$ (resp. $\Gamma^{\dag}_{2}/F^{(0,1)}\Gamma^{\dag}$) on $\Gr^{\boldsymbol{m}}_{1} \tilde{\Gamma}$ (resp. $\Gr^{\boldsymbol{m}}_{2} \tilde{\Gamma}$) is given by $\chi_{1}^{m_{1}}$ (resp. $\chi_{2}^{m_{2}}$).
\end{proposition}

\begin{proof}
Before proving the assertion, we note that the group $\Gamma^{\dag}_{1}/F^{(1,0)}\Gamma^{\dag}$ acts on the graded quotients $\Gr^{\boldsymbol{m}}_{1}\Pi$ by  $\chi_{1}^{m_{1}}$ by Lemma \ref{lmm:act_gamma_2}. Take an arbitrary element $\gamma$ of $\Gamma^{\dag}_{1}$. We prove the first assertion by computing the image of $\gamma \cdot f$ under the map $i_{\boldsymbol{m}, 1}$ for an arbitrary $f \in F^{\boldsymbol{m}}\tilde{\Gamma}$.

First, we have
\begin{align*}
(\gamma \cdot f)(y)y^{-1} 
&= (\gamma f \gamma^{-1})(y)y^{-1} \\
&= \gamma(f(\gamma^{-1}(y))\gamma^{-1}(y^{-1})).
\end{align*} 
Since $\alpha \coloneqq y^{-1}\gamma^{-1}(y)$ is contained in $\Pi(2)=\Pi(\boldsymbol{1})$, we have
\[
\gamma(f(\gamma^{-1}(y))\gamma^{-1}(y^{-1}))=\gamma(f(y\alpha)\alpha^{-1}y^{-1}) \equiv \gamma(f(y)y^{-1}) \bmod \Pi(\boldsymbol{m}+\boldsymbol{1}).
\] Here, we use $f(\alpha)\alpha^{-1} \in \Pi(\boldsymbol{m}+\boldsymbol{1})$, which follows from Lemma \ref{lmm:trivial_action}, to deduce the last congruence. By Lemma \ref{lmm:act_gamma_2}, the last term coincides with $\chi_{1}^{m_{1}}(\gamma)(f(y)y^{-1})$ as desired. Secondly, we compute $(\gamma \cdot f)(x)x^{-1}$ as
\begin{align*}
(\gamma \cdot f)(x)x^{-1} 
&= (\gamma f \gamma^{-1})(x)x^{-1} \\
&= \gamma(f(\gamma^{-1}(x))\gamma^{-1}(x^{-1})).
\end{align*} By Lemma \ref{lmm:act_gamma}, the element $\beta \coloneqq x^{-\chi_{1}(\gamma^{-1})}\gamma^{-1}(x)$ is contained in $\Pi(2,0)$. Since $f(\beta)\beta^{-1}$ is contained in $\Pi(\boldsymbol{m}+(2,0))$ by Lemma \ref{lmm:trivial_action}, we have
\begin{align*}
\gamma(f(\gamma^{-1}(x))\gamma^{-1}(x^{-1}))
&= \gamma(f(x^{\chi_{1}(\gamma^{-1})} \beta)\beta^{-1}x^{-\chi_{1}(\gamma^{-1})}) \\
&\equiv \gamma(f(x^{\chi_{1}(\gamma^{-1})})x^{-\chi_{1}(\gamma^{-1})}) \bmod \Pi(\boldsymbol{m}+(2,0)) \\
&= \chi_{1}^{m_{1}+1}(\gamma)(f(x^{\chi_{1}(\gamma^{-1})})x^{-\chi_{1}(\gamma^{-1})})
\end{align*}
Hence, to obtain the desired equality, it suffices to show  the congruence 
\[f(x^n)x^{-n} \equiv (f(x)x^{-1})^{n} \bmod \Pi(\boldsymbol{m}+(2,0)) \] for every $n \in \mathbb{Z}_{p}$. By continuity, it suffices to prove the congruence for every $n \in \mathbb{Z}_{\geq 1}$, which we prove by induction on $n$. We  compute $f(x^{n})x^{-n}$ as follows:
\begin{align*}
f(x^{n})x^{-n} 
&= f(x)f(x^{n-1})x^{-(n-1)}x^{-1} \\
&\equiv  f(x)(f(x)x^{-1})^{n-1}x^{-1} \bmod \Pi(\boldsymbol{m}+(2,0)) \\
&= f(x)(f(x)x^{-1})^{n-1}f(x)^{-1}(f(x)x^{-1}) = [f(x), (f(x)x^{-1})^{n-1}](f(x)x^{-1})^{n} \\
&\equiv (f(x)x^{-1})^{n} \bmod \Pi(\boldsymbol{m}+(2,0)).
\end{align*} Here, the induction hypothesis is used to deduce the second congruence, and we use 
\[
[f(x), (f(x)x^{-1})^{n-1}] \in \Pi(\boldsymbol{m}+(2,0))
\] to establish the last congruence. This concludes the proof.
\end{proof}

\begin{remark}
    Proposition \ref{prp:act_gamma_3} is an analogue of the fact that the map $\tilde{\iota}_{m}$ in \cite[(1.11.2)]{NT} is compatible with the action of the product $\mathrm{GSp}(2g,n) \coloneqq \mathrm{GSp}(2g) \times \mathfrak{S}_{n}$ of the symplectic similitude group over $\mathbb{Z}_{p}$ and the $n$-th symmetric group, which is proved in \cite[Theorem (1.11.4)]{NT}.
\end{remark}

By Proposition \ref{prp:act_gamma_3}, we now understand how $\Gamma^{\dag}_{1}/F^{(1,0)}\Gamma^{\dag}$ acts on $gr^{\boldsymbol{m}}_{1}\tilde{\Gamma}$. The following lemma allows us to describe the action of $\Gamma_{1}/F^{(1,0)}\Gamma$ on $gr^{\boldsymbol{m}}_{1}\Gamma$:

\begin{lemma}\label{(1,0)}
The natural homomorphisms
\[
\Gamma^{\dag}_{1}/F^{(1,0)}\Gamma^{\dag} \to \Gamma_{1}/F^{(1,0)}\Gamma
\quad \text{and} \quad
\Gamma^{\dag}_{2}/F^{(0,1)}\Gamma^{\dag} \to \Gamma_{2}/F^{(0,1)}\Gamma
\] are isomorphisms.
\end{lemma}

\begin{proof}
Since we have 
\[
\Ker(\Gamma^{\dag}_{1} \to  \Gamma_{1})=\Ker(F^{(1,0)}\Gamma^{\dag} \to F^{(1,0)}\Gamma)=\langle \mathrm{inn}(z) \rangle,
\] it suffices to prove that both $\Gamma^{\dag}_{1} \to \Gamma_{1}$ and $F^{(1,0)}\Gamma^{\dag} \to F^{(1,0)}\Gamma$ are surjective. First, we prove the surjectivity of $\Gamma^{\dag}_{1} \to \Gamma_{1}$. Let $\bar{\gamma} \in \Gamma_{1}$ be an arbitrary element and fix an arbitrary lift $\gamma \in \tilde{\Gamma}_{1}$ of $\bar{\gamma}$. If we write $\gamma(z)=gz^{\alpha}g^{-1}$ for some $g \in \Pi$ and $\alpha \in \mathbb{Z}_{p}^{\times}$, $g^{-1}\gamma g=\mathrm{inn}(g^{-1}) \circ \gamma$ preserves $\langle z \rangle$. Moreover, 
\[
(g^{-1} \gamma g)(y)y^{-1}=g^{-1}\gamma(y)gy^{-1}=[g^{-1}, \gamma(y)]\gamma(y)y^{-1} \in \Pi(2).
\] This shows $g^{-1}\gamma g \in \Gamma^{\dag}_{1}$, hence the surjectivity of $\Gamma^{\dag}_{1} \to \Gamma_{1}$. Regarding the surjectivity of $F^{(1,0)}\Gamma^{\dag} \to F^{(1,0)}\Gamma$, let $\bar{\gamma} \in F^{(1,0)}\Gamma$ be arbitrary element and fix a lift $\gamma \in F^{(1,0)}\tilde{\Gamma}$ of $\bar{\gamma}$. If we write $\gamma(z)=gz^{\alpha}g^{-1}$ for some $g \in \Pi$ and $\alpha \in \mathbb{Z}_{p}^{\times}$, then we have $g^{-1}\gamma g \in \Gamma^{\dag}_{1}$ by the above argument. Observe that
\[
\gamma(z)z^{-1}=[g, z^{\alpha}]z^{\alpha-1} \in \Pi(3)
\Rightarrow z^{\alpha-1} \in \Pi(3) \Rightarrow \alpha=1.
\]
Hence, to prove $(g^{-1}\gamma g)(x)x^{-1}=[g^{-1}, \gamma(x)]\gamma(x)x^{-1} \in \Pi(2,0)$, it is enough to show $g \in \Pi(1,0)$. Note that $\Pi(\boldsymbol{1})/\Pi(2,1)=\Pi(\boldsymbol{1})/\Pi(2,0) \subset \Pi(1,0)/\Pi(2,0)$ is a free $\mathbb{Z}_{p}[[\Pi/\Pi(1,0)]]$-submodule generated by $z=(y-1)x$. Hence that $\gamma(z)z^{-1}=[g,z]=(g-1)z$ is contained in $\Pi(2,0)$ implies $g=1$ in $\mathbb{Z}_{p}[[\Pi/\Pi(1,0)]]$, i.e. $g \in \Pi(1,0)$. This concludes the proof.
\end{proof}

To summarize the discussion so far, we obtain the following result on $\Gamma$:

\begin{proposition}\label{prp:act_gamma4}
    Let $\boldsymbol{m}=(m_{1},m_{2}) \in \mathbb{Z}_{\geq 0}^{2} \setminus \{ (0,0) \}$. The action of $\Gamma_{1}/F^{(1,0)}\Gamma$ (resp. $\Gamma_{2}/F^{(0,1)}\Gamma$) on $\Gr^{\boldsymbol{m}}_{1} \Gamma$ (resp. $\Gr^{\boldsymbol{m}}_{2} \Gamma$) is given by $\chi_{1}^{m_{1}}$ (resp. $\chi_{2}^{m_{2}}$).
\end{proposition}
\begin{proof}
    The assertion follows from Proposition \ref{prp:act_gamma_3} and Lemma \ref{(1,0)}.
\end{proof}

The following lemma will be used in the next subsection:

\begin{lemma}\label{lmm:kernel}
We have
\[
F^{(1,0)}\Gamma^{\dag}=
\Ker(\Gamma^{\dag}_{1} \xrightarrow{\chi_{1}} \mathbb{Z}_{p}^{\times})
\quad \text{and} \quad
F^{(0,1)}\Gamma^{\dag}=
\Ker(\Gamma^{\dag}_{2} \xrightarrow{\chi_{2}} \mathbb{Z}_{p}^{\times}).
\]
\end{lemma}

\begin{proof}
We only prove the first equality. Let $\gamma$ be an element of $\Ker(\Gamma^{\dag}_{1} \xrightarrow{\chi_{1}} \mathbb{Z}_{p}^{\times})$. Since we have $\gamma(y)y^{-1} \in \Pi(2)=\Pi(1,1)$, it suffices to show $\gamma(x)=x \bmod \Pi(2,0)$. Since the action of $\Gamma^{\dag}_{1}$ on $\Pi(1,0)/\Pi(2,0)$, which is a free $\mathbb{Z}_{p}[[\Pi/\Pi(1,0)]]$-module generated by the image of $x$,  is $\mathbb{Z}_{p}[[\Pi/\Pi(1,0)]]$-linear, the desired equality follows from the equality $\gamma(z)=z$. 
\end{proof}

\subsection{Two-variable filtration on Galois group}

In the following, we apply various results obtained in this section to the case of the pro-$p$ fundamental group $\Pi=\Pi_{1,1}$. We use our fixed basis $x=x_{1}$ and $y=x_{2}$ of $\Pi_{1,1}$. Recall that we have the pro-$p$ outer Galois representation
\[
\rho_{X, p} \colon G_{K} \to \Out(\Pi_{1,1}),
\] and the image of $\rho_{X,p}$ is contained in $\Gamma \subset \Gamma_{1,1}$ introduced in the last subsection. For $\boldsymbol{m} \in \mathbb{Z}_{\geq 0}^{2} \setminus \{ (0,0) \}$, let $F^{\boldsymbol{m}}G_{K} \subset F^{\lvert \boldsymbol{m} \rvert }G_{K}$ denote the inverse image of $F^{\boldsymbol{m}}\Gamma$ under $\rho_{X,p}$. We define the associated graded quotients by
\[
\Gr^{\boldsymbol{m}}_{1}G_{K} \coloneqq F^{\boldsymbol{m}}G_{K}/F^{\boldsymbol{m}+(1,0)}G_{K}
\quad \text{and} \quad \Gr^{\boldsymbol{m}}_{2}G_{K} \coloneqq F^{\boldsymbol{m}}G_{K}/F^{\boldsymbol{m}+(0,1)}G_{K}.
\] Moreover, let $F_{1}G_{K}$ (resp. $F_{2}G_{K}$) be the inverse image of $\Gamma_{1}$ (resp. $\Gamma_{2}$). 

\begin{lemma}\label{lmm:Gal_1}
We have the following equalities:
\begin{enumerate}
\item[(1)] $F_{1}G_{K}=G_{K(E[\bar{\p}^{\infty}])}$ and $F_{2}G_{K}=G_{K(E[\p^{\infty}])}$.
\item[(2)] $F^{(1,0)}G_{K}=F^{(0,1)}G_{K}=F^{(1,1)}G_{K}=G_{K(E[p^{\infty}])}$.
\end{enumerate}
\end{lemma}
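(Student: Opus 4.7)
The plan is to settle part (1) directly from the definitions, and for part (2) to reduce everything to the single inclusion $G_{K(E[p^{\infty}])}\subset F^{(1,1)}G_{K}$, which we establish by correcting any lift of $\rho_{X,p}(\sigma)$ via suitable inner automorphisms.

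For (1), the defining condition $\gamma(y)y^{-1}\in\Pi(2)$ of $\tilde{\Gamma}_{1}$ is equivalent to the image of $\gamma$ in $\mathrm{Aut}(\Pi^{\mathrm{ab}})$ fixing the class of $y$, i.e., to $\chi_{2}(\gamma)=1$. Since $\chi_{2}$ factors through $\Gamma=\tilde{\Gamma}/\mathrm{Inn}(\Pi)$ (inner automorphisms act trivially on $\Pi^{\mathrm{ab}}$), this yields $\Gamma_{1}=\ker(\chi_{2}\colon\Gamma\to\mathbb{Z}_{p}^{\times})$. Pulling back through $\rho_{X,p}$ and using that $\chi_{2}\circ\rho_{X,p}$ is precisely the Galois character giving the action on $T_{\bar{\p}}(E)$ gives $F_{1}G_{K}=\ker(\chi_{2}\colon G_{K}\to\mathbb{Z}_{p}^{\times})=G_{K(E[\bar{\p}^{\infty}])}$; the case of $F_{2}G_{K}$ is symmetric.

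For (2), the chain $F^{(1,1)}\tilde{\Gamma}\subset F^{(1,0)}\tilde{\Gamma}\cap F^{(0,1)}\tilde{\Gamma}\subset F^{1}\tilde{\Gamma}$ (the first from the definition since $\Pi(\boldsymbol{m}+\boldsymbol{k})$ shrinks as $\boldsymbol{m}$ grows, the second by Lemma \ref{lmm:incl}(1)) yields $F^{(1,1)}G_{K}\subset F^{(1,0)}G_{K}\cap F^{(0,1)}G_{K}\subset F^{1}G_{K}=G_{K(E[p^{\infty}])}$, so it suffices to prove $G_{K(E[p^{\infty}])}\subset F^{(1,1)}G_{K}$. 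Given $\sigma\in G_{K(E[p^{\infty}])}$, fix any lift $\gamma\in\tilde{\Gamma}$ of $\rho_{X,p}(\sigma)$; then $\chi_{1}(\gamma)=\chi_{2}(\gamma)=1$, so $\gamma$ acts trivially on $\Pi/\Pi(1,0)\cong\mathbb{Z}_{p}$, hence $\mathbb{Z}_{p}[[\Pi/\Pi(1,0)]]$-linearly on the free rank-one module $\Pi(1,0)/\Pi(2,0)$ generated by $x$; write the scalar as $\lambda\in\mathbb{Z}_{p}[[\Pi/\Pi(1,0)]]^{\times}$.

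The decisive step is the identity $\lambda=\bar{y}^{n}$ for some $n\in\mathbb{Z}_{p}$, where $\bar{y}$ denotes the image of $y$ in $\Pi/\Pi(1,0)$. Writing $\gamma(z)=gz^{\alpha}g^{-1}$ with $\alpha=\chi_{1}\chi_{2}(\gamma)=1$ and the image of $g$ in $\Pi/\Pi(1,0)$ equal to $\bar{y}^{n}$, and using $z=(\bar{y}-1)\cdot x$ in $\Pi(1,0)/\Pi(2,0)$, the two expressions $\lambda(\bar{y}-1)x$ and $\bar{y}^{n}(\bar{y}-1)x$ for $\gamma(z)$ must agree, and cancelling the non-zero-divisor $\bar{y}-1$ in the domain $\mathbb{Z}_{p}[[\Pi/\Pi(1,0)]]$ gives $\lambda=\bar{y}^{n}$. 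Then $\gamma_{1}\coloneqq\mathrm{inn}(y^{-n})\circ\gamma$ acts trivially on $\Pi(1,0)/\Pi(2,0)$ (since $\mathrm{inn}(y^{-n})$ acts as $\bar{y}^{-n}$) and on $\Pi(0,1)/\Pi(1,1)$, which embeds into $\Pi^{\mathrm{ab}}$ where both $\gamma$ and inner automorphisms are trivial; so $\gamma_{1}\in F^{(1,0)}\tilde{\Gamma}$. A symmetric analysis on the free rank-one $\mathbb{Z}_{p}[[\Pi/\Pi(0,1)]]$-module $\Pi(0,1)/\Pi(0,2)$ shows $\gamma_{1}$ acts there as multiplication by $\bar{x}^{m}$ for some $m\in\mathbb{Z}_{p}$, and composing further with $\mathrm{inn}(x^{-m})\in\mathrm{Inn}_{\Pi(1,0)}(\Pi)\subset F^{(1,0)}\tilde{\Gamma}$ produces $\gamma_{2}\in F^{(1,1)}\tilde{\Gamma}$, invoking Lemma \ref{lmm:incl}(2) to identify $\Pi(2,1)=\Pi(2,0)$ and $\Pi(1,2)=\Pi(0,2)$. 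Hence $\sigma\in F^{(1,1)}G_{K}$. The main obstacle is this module-theoretic computation of $\lambda$, which requires combining the $\mathbb{Z}_{p}[[\Pi/\Pi(1,0)]]$-linearity of $\gamma$ with the specific conjugacy constraint on $\langle z\rangle$ and the domain structure of the Iwasawa algebra.
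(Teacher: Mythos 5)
Your proof is correct, and for part (2) it is organized differently from the paper's. The paper deduces the statement from Lemma \ref{lmm:kernel} (the kernel of $\chi_{1}$ on $\Gamma^{\dag}_{1}$ is exactly the two-variable piece $\Gamma^{\dag}(1,0)$) together with Lemma \ref{lmm:(1,0)} (the comparison $\Gamma^{\dag}_{1}/\Gamma^{\dag}(1,0)\xrightarrow{\sim}\Gamma_{1}/\Gamma(1,0)$), which identify $F^{(1,0)}G_{K}$ with $\ker(\chi_{1}\mid_{F_{1}G_{K}})=G_{K(E[p^{\infty}])}$, and then invokes the identity $F^{(1,0)}G_{K}\cap F^{(0,1)}G_{K}=F^{(1,1)}G_{K}$. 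You bypass $\Gamma^{\dag}$ entirely: starting from an arbitrary lift in $\tilde{\Gamma}$ of $\rho_{X,p}(\sigma)$ you run the same module-theoretic engine that powers those two lemmas (freeness of $\Pi(1,0)/\Pi(2,0)$ over $\mathbb{Z}_{p}[[\Pi/\Pi(1,0)]]$, the relation $z=(\bar{y}-1)x$, cancellation in the Iwasawa algebra) to pin the scalar down to $\bar{y}^{n}$, correct by $\mathrm{inn}(y^{-n})$, repeat symmetrically with $\mathrm{inn}(x^{-m})\in\mathrm{Inn}_{\Pi(1,0)}(\Pi)\subset F^{(1,0)}\tilde{\Gamma}$, and exhibit a single lift lying in $F^{(1,1)}\tilde{\Gamma}$. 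This buys you something real: you never need the intersection identity at the level of images in $\Gamma$ (where ``image of intersection versus intersection of images'' would require its own small argument), and all the claimed equalities fall out of one explicit lift; the paper's route, in exchange, isolates the two auxiliary statements in the abstract setting of Section \ref{3.2} where they can be quoted. Two small points you should make explicit: to get $F^{(1,0)}G_{K}=G_{K(E[p^{\infty}])}$ itself (not merely the intersection) you also need $F^{(1,0)}G_{K}\subset F^{1}G_{K}$, which is the same appeal to Lemma \ref{lmm:incl}(1) you already make for the chain; and the identity $\alpha=\chi_{1}\chi_{2}(\gamma)$ for an arbitrary $\gamma\in\tilde{\Gamma}$ with $\gamma(z)=gz^{\alpha}g^{-1}$ (the paper records this only for $\Gamma^{\dag}$) needs the one-line justification that conjugation acts trivially on $\Pi(2)/\Pi(3)\cong\mathbb{Z}_{p}\,z$ and the commutator pairing is bilinear there. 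Neither is a gap.
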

\begin{proof}
The assertion of (1) is clear. By Lemma \ref{(1,0)} and Lemma \ref{lmm:kernel}, the group $F^{(1,0)}G_{K}$ coincides with the kernel of $F_{1}G_{K} \xrightarrow{\chi_{1}} \mathbb{Z}_{p}^{\times}$, and thus it coincides with $G_{K(E[p^{\infty}])}$. The same argument shows $F^{(0,1)}G_{K}=G_{K(E[p^{\infty}])}$. Since we have
\[
F^{(1,0)}G_{K} \cap F^{(0,1)}G_{K}=F^{(1,1)}G_{K},
\] the last assertion also follows.
\end{proof}

The following corollary is one of key ingredients to prove Theorem \ref{thm:main}:

\begin{corollary}\label{cor:Gal_2}
Let $\boldsymbol{m} \in \mathbb{Z}_{\geq 0}^{2} \setminus \{ (0,0) \}$. The action of $\Gal(K(E[p^{\infty}])/K(E[\bar{\p}^{\infty}]))$ on $\Gr^{\boldsymbol{m}}_{1}G_{K}$ is given by the character $\chi_{1}^{m_{1}}$. Similarly, the action of $\Gal(K(E[p^{\infty}])/K(E[\p^{\infty}]))$ on $\Gr^{\boldsymbol{m}}_{2}G_{K}$ is given by $\chi_{2}^{m_{2}}$.
\end{corollary}

\begin{proof} 
Both assertions immediately follow from Proposition \ref{prp:act_gamma4} and Lemma \ref{lmm:Gal_1}.
\end{proof}

\section{Proof of main result}\label{4}

In  this section, we keep the same notation as in Section \ref{2}. Moreover, we abbreviate $\Gal(\Omega/K(p))$ and $\Gal(\Omega^{\ast}/K(p))$ as $G$ and $G^{\ast}$, respectively. Recall that the index set $I$ is defined by
\[
I = \{ \boldsymbol{m}=(m_{1}, m_{2}) \in \mathbb{Z}_{ \geq 1}^{2} \setminus \{ \boldsymbol{1} \} \mid m_{1} \equiv m_{2} \bmod \lvert O_{K}^{\times} \rvert \}.
\] In this section, we also use the following subset:
\[
I_{0} \coloneqq \{ \boldsymbol{m} \in I \mid (p-1, p-1) \geq \boldsymbol{m} \} \cup \{(p,1), (1,p) \}.
\]

We first give two filtrations on $G^{\ast}$.  By Lemma \ref{lmm:half}, we have 
\[
\rho_{X,p}(G_{K(p)})=G^{\ast} \times \Gal(K(E[p])/K(p)) \subset \Gamma_{1,1}.
\] The image $\rho_{X,p}(G_{K(p)})$ has a descending central filtration defined at the beginning of Section \ref{2.1}. Moreover, it is also equipped with a two-variable filtration induced from the two-variable filtration on $\Gamma$ as in the previous subsection. By taking the images under the projection $\rho_{X,p}(G_{K(p)}) \to G^{\ast}$, the group $G^{\ast}$ is also equipped with a descending central filtration $\{ F^{m}G^{\ast} \}_{m \geq 1}$ and the two-variable filtration $\{ F^{\boldsymbol{m}}G^{\ast} \}_{\boldsymbol{m}}$. 

Moreover, by taking pullbacks of these filtrations under the natural projection, we obtain filtrations $\{F^{m}G\}_{m \geq 1}$ and $\{ F^{\boldsymbol{m}}G \}_{\boldsymbol{m}}$. Then, by construction, the graded Lie algebras associated to $\{F^{m}G^{\ast} \}_{m \geq 1}$ and $\{ F^{m}G  \}_{m \geq 1}$ are isomorphic to $\mathfrak{g}=\oplus_{m \geq 1}\mathfrak{g}_{m}$ defined in Section \ref{2.1}.

In the subsequent subsections, we give a proof of Theorem \ref{thm:main}. The proof follows Sharifi's approach \cite[Theorem 1.2]{Sh}. Namely, we show that the intersection $\cap_{m \geq 1} F^{m}G=\Gal(\Omega/\Omega^{\ast})$ is trivial by the following strategy.

\begin{enumerate}
\item[(1)] In Section \ref{4.1}, we construct an element $\sigma_{\boldsymbol{m}} \in F^{\boldsymbol{m}}G$ whose image in $\mathfrak{g}_{|\boldsymbol{m}|}$ satisfies the assumption of Conjecture \ref{cnj:DI11}.  Moreover, it is proved in Section \ref{4.2} that $\{ \sigma_{\boldsymbol{m}} \}_{\boldsymbol{m} \in I}$ strongly generates $F^{1}G$. Here, several properties of two-variable filtrations are used in an essential way.
\item[(2)] Conjecture \ref{cnj:DI11} implies that $\{ F^{m}G \}_{m \geq 1}$ coincides with the ``fastest'' descending central filtration $\{ \tilde{F}^{m}G \}_{m \geq 1}$ satisfying $\sigma_{\boldsymbol{m}} \in  \tilde{F}^{|\boldsymbol{m}|}G$ for every $\boldsymbol{m} \in I$. Since the intersection of the latter filtration is proved to be trivial by Lemma \ref{lmm:filt} (3) in a purely group-theoretic way, we obtain the desired result.
\end{enumerate}

We first prepare a lemma. This allows us to apply Proposition \ref{prp:DI11_Soule} to construct a basis of $\mathfrak{g} \otimes \mathbb{Q}_{p}$ under the assumption of Theorem \ref{thm:main}: 

\begin{lemma}\label{lmm:nontrivial}
    Assume that the class number of $K(p)$ is not divisible by $p$ and there are exactly two primes of $K(p^{2})$ above $p$. Then the character $\kappa_{\boldsymbol{m}}$ is nontrivial for every $\boldsymbol{m} \in I$.
\end{lemma}
\begin{proof}
    The assertion follows from Theorem \ref{thm:2} and Corollary \ref{cor:app1}.
\end{proof}

\subsection{Construction of elements}\label{4.1}

In this subsection, we construct elements $\{ \sigma_{\boldsymbol{m}} \}_{\boldsymbol{m} \in I}$ satisfying the assumption of Conjecture \ref{cnj:DI11} in such a way that can be regarded as a two-variable variant of Sharifi's construction in \cite[2]{Sh}. We also construct auxiliary elements $\{ g_{\boldsymbol{m}} \}_{\boldsymbol{m} \in I}$, which are similar to $\{ \sigma_{\boldsymbol{m}} \}_{\boldsymbol{m} \in I}$ but much easier to handle. These auxiliary elements play an important role when proving that $\{ \sigma_{\boldsymbol{m}} \}_{\boldsymbol{m} \in I}$ strongly generate $F^{1}G$, see the proof of Theorem \ref{thm:main} in Section \ref{4.2} for more details.

First, we lift generators of $\Gal(K(p^{\infty})/K) \cong \Delta \times \mathbb{Z}_{p}^{2}$ to $\Gal(\Omega/K)$, as follows: Let us denote the maximal pro-$p$ subextension of $K(p^{\infty})/K$ by $K_{\infty}/K$. Then it is a $\mathbb{Z}_{p}^{2}$-extension. The upper exact sequence in the following commutative diagram
\[
\begin{tikzcd}
1 \arrow[r] &  \Gal(\Omega/K(p^{\infty})) \arrow[r]\arrow[d, hook] & \Gal(\Omega/K_{\infty}) \arrow[r]\arrow[d, hook] & \Gal(K(p^{\infty})/K_{\infty}) \arrow[r]\arrow[d, "\rotatebox{90}{\(\sim\)}"] & 1 \\
1 \arrow[r] & G \arrow[r] & \Gal(\Omega/K) \arrow[r] & \Delta=\Gal(K(p)/K) \arrow[r] & 1 
\end{tikzcd}
\]
splits since $\Gal(\Omega/K(p^{\infty}))$ is a pro-$p$ group and $\Delta$ is a prime-to-$p$ group by Schur-Zassenhaus \cite[5.9, Corollary 1]{Se}. We fix a section $r \colon \Gal(K(p)/K) \to \Gal(\Omega/K_{\infty})$ and identify  $\Gal(\Omega/K)$ with the semi-direct product $G \rtimes \Delta$. Then $\Delta$ acts on $\Gal(\Omega/K(p^{\infty}))$ through this section by conjugation, and  
\[
1 \to  \Gal(\Omega/K(p^{\infty})) \to G \to \Gal(K(p^{\infty})/K(p)) \to 1
\] is an exact sequence of pro-$p$ groups with $\Delta$-action, noting that the action of $\Delta$ on $\Gal(K(p^{\infty})/K(p))$ is trivial. Then the sequence
\[
1 \to  \Gal(\Omega/K(p^{\infty}))^{\Delta} \to G^{\Delta} \to \Gal(K(p^{\infty})/K(p)) \to 1
\] obtained by taking $\Delta$-invariant subgroups is exact, since the homomorphism 
\[
G^{\Delta} \to \Gal(K(p^{\infty})/K(p))
\] is surjective by applying Lemma \ref{lmm:idemp} below to the case of $m=0$ below.

Let $\gamma_{1}$ (resp. $\gamma_{2}$) be an element of the $\Delta$-invariant subgroup $G^{\Delta}$ which restricts to a generator of $\Gal(K(p^{\infty})/K(\bar{\p}^{\infty}\p))\cong \mathbb{Z}_{p}$ (resp. $\Gal(K(p^{\infty})/K(\p^{\infty}\bar{\p})) \cong \mathbb{Z}_{p}$). Moreover, we fix a generator $\delta \in \mathbb{F}_{p}^{\times}$ and let $\delta_{1}$ (resp. $\delta_{2}$) be an element of $\Gal(\Omega/K)$ defined to be the image of the image of $(\delta,1)$ (resp. $(1, \delta)$) in $\Delta$ through the homomorphism \[
 (\mathbb{F}_{p}^{\times} \times \mathbb{F}_{p}^{\times})/\Img(O_{K}^{\times}) \xrightarrow{(\chi_{1},\chi_{2})^{-1}}
\Delta  \xrightarrow{r} \Gal(\Omega/K),
\] where the first arrow is an isomorphism induced by characters $\chi_{1}$ and $\chi_{2}$. Then $\delta_{1}$ (resp. $\delta_{2}$) is contained in $\Gal(\Omega/K(\bar{\p}))$ (resp. $\Gal(\Omega/K(\bar{p}))$) and, by construction, we have the following relations:
\[
[\delta_{1}, \delta_{2}]=1, \quad
[\delta_{1}, \gamma_{1}]=1, \quad
[\delta_{1}, \gamma_{2}]=1, \quad
[\delta_{2}, \gamma_{1}]=1 \quad \text{and} \quad
[\delta_{2}, \gamma_{2}]=1. \quad
\]

For $m \in \mathbb{Z}_{\geq 0}$ and $1 \leq i \leq 2$, we define an idempotent $\epsilon_{i, m} \in \mathbb{Z}_{p}[\Gal(\Omega/K)]$ by
\[
\epsilon_{i,m} \coloneqq \frac{1}{p-1}\sum_{j=0}^{p-2} \chi_{i}^{-m}(\delta_{i}^{j}) \delta_{i}^{j}.
\]

Moreover, for $g \in G$, we define 
\[
g^{\epsilon_{i,m}} \coloneqq (g \cdot \delta_{i}g^{\chi_{i}^{-m}(\delta_{i})}\delta_{i}^{-1} \cdots \delta_{i}^{p-2} g^{\chi_{i}^{-m}(\delta_{i}^{p-2})} \delta_{i}^{-(p-2)})^{\frac{1}{p-1}},
\] and let $g^{\epsilon_{i,m}^{j}}=(\cdots(g^{\epsilon_{i,m}})\cdots)^{\epsilon_{i,m}}$ denote its $j$-th iterate for $j \geq 1$. Since $G$ is nonabelian, we do not necessarily have the equality $g^{\epsilon_{i,m}^{j}}=g^{\epsilon_{i,m}}$. However, we have the following lemma, originally due to Sharifi, which is one of the key ingredients to construct desired elements $\{\sigma_{\boldsymbol{m}} \}_{\boldsymbol{m} \in I}$ group-theoretically. 

\begin{lemma}[cf. Sharifi {\cite[Lemma 2.1]{Sh}}]\label{lmm:idemp}
For every $g \in G$,  $m \in \mathbb{Z}_{\geq 0}$ and $1 \leq i \leq 2$, the limit
\[
g^{(i,m)} \coloneqq \lim_{j \to \infty} g^{\epsilon_{i,m}^{j}}
\] exists and satisfies $
\delta_{i}g^{(i,m)}\delta_{i}^{-1} \coloneqq (g^{(i,m)})^{\chi_{i}^{m}(\delta_{i})}.
$
\end{lemma}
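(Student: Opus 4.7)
The plan is to follow Sharifi's proof of \cite[Lemma 2.1]{Sh} in the cyclotomic case, adapted to the present two-variable setting governed by the filtration $\{F^n G\}_{n \geq 1}$ of Section \ref{3.3}. The pivotal algebraic fact is that $\epsilon_{i,m}$ is the idempotent in $\mathbb{Z}_p[\langle \delta_i \rangle]$ projecting onto the $\chi_i^m$-eigenspace: since $\langle \delta_i \rangle \cong \mathbb{Z}/(p-1)$ and $\chi_i$ restricted to it is a faithful character, one has $\epsilon_{i,m}^2 = \epsilon_{i,m}$ and $\delta_i \epsilon_{i,m} = \chi_i^m(\delta_i)\,\epsilon_{i,m}$ inside $\mathbb{Z}_p[\langle \delta_i \rangle]$.

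For convergence of $a_j \coloneqq g^{\epsilon_{i,m}^j}$, I would induct on $n$, showing that $a_j \bmod F^n G$ stabilizes for $j$ sufficiently large. The operation $\phi(h) \coloneqq h^{\epsilon_{i,m}}$ is not a group homomorphism, but on each graded piece $F^n G / F^{n+1} G$ it descends to the $\mathbb{Z}_p$-linear idempotent $\epsilon_{i,m}$ acting on this abelian $\mathbb{Z}_p[\Delta]$-module: the factors in the product defining $\phi$ commute modulo $F^{n+1} G$ because $[F^n G, F^n G] \subset F^{2n} G \subset F^{n+1} G$ for $n \geq 1$, and on this abelian quotient the $(p-1)$-th root becomes ordinary division by $p-1$. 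Granting the inductive hypothesis $a_j \equiv a \pmod{F^n G}$ for $j \geq N$, the residue of $a_j a^{-1}$ in $F^n G/F^{n+1} G$ is transformed by $\phi$ according to the idempotent $\epsilon_{i,m}$ and hence stabilizes after one further iteration. Arguing as in Sharifi's proof then gives convergence of $(a_j)$ in the profinite topology of $G$, yielding a limit $g^{(i,m)} \in G$.

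For the transformation identity, I would compute directly. Set $y_j \coloneqq \prod_{k=0}^{p-2} \delta_i^k a_j^{\chi_i^{-m}(\delta_i^k)} \delta_i^{-k}$, so that $a_{j+1} = y_j^{1/(p-1)}$. Since $\delta_i^{p-1} = 1$, conjugation by $\delta_i$ cyclically shifts the indices; after reindexing $k' = k + 1 \bmod p-1$ and factoring out $\chi_i^m(\delta_i)$ via $\chi_i^{-m}(\delta_i^{k'-1}) = \chi_i^m(\delta_i)\,\chi_i^{-m}(\delta_i^{k'})$, one obtains $\delta_i y_j \delta_i^{-1} = y_j^{\chi_i^m(\delta_i)} \cdot r_j$, where the reordering correction $r_j$ lies in a strictly deeper filtration step than $y_j$. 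As $(p-1)$-th roots in the pro-$p$ group $G$ are unique and commute with conjugation, this yields $\delta_i a_{j+1} \delta_i^{-1} \equiv a_{j+1}^{\chi_i^m(\delta_i)}$ modulo a filtration step that deepens as $j$ grows. Passing to the limit gives the claimed equality $\delta_i g^{(i,m)} \delta_i^{-1} = (g^{(i,m)})^{\chi_i^m(\delta)}$ in $G$.

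The main obstacle I anticipate is the delicate bookkeeping of the non-commutative corrections: tracking how the commutator errors arising from cyclically reordering $\delta_i y_j \delta_i^{-1}$, and from extracting $(p-1)$-th roots, propagate into strictly deeper filtration levels under successive iterations of $\phi$. This is precisely the technical heart of Sharifi's one-variable argument, and the two-variable filtration machinery of Section \ref{3} is tailored so that the proof transports essentially verbatim, with the two characters $\chi_1, \chi_2$ replacing the single cyclotomic character in his setting.
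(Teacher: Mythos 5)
Your overall strategy---transporting Sharifi's proof of \cite[Lemma 2.1]{Sh}---is exactly what the paper intends (its proof of Lemma \ref{lmm:idemp} is literally a citation of that lemma), but there is one concrete gap: the filtration you run the argument against. You measure both convergence and the error terms with the weight filtration $\{F^{n}G\}_{n \geq 1}$ of Sections \ref{3.3}--\ref{4}. However $\bigcap_{n \geq 1}F^{n}G=\mathrm{Gal}(\Omega/\Omega^{\ast})$, and the triviality of this intersection is precisely the content of Theorem \ref{thm:main}; it is not available when Lemma \ref{lmm:idemp} is proved. Showing that $g^{\epsilon_{i,m}^{j}}$ stabilizes modulo every $F^{n}G$ therefore only produces a limit in $G/\bigcap_{n}F^{n}G$, not in $G$, and the relation $\delta_{i}g^{(i,m)}\delta_{i}^{-1}=(g^{(i,m)})^{\chi_{i}^{m}(\delta)}$ would likewise only be established modulo that intersection. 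Since the elements $\sigma_{\boldsymbol{m}}$ built from this lemma are exactly what is later used to prove the intersection trivial, the argument as written is circular (or, at best, proves the lemma only for $G^{\ast}$). The remedy is what Sharifi actually does: run the induction against the descending central series of the pro-$p$ group $G$ itself, or equivalently inside each finite quotient $G/N$ with $N$ open and normal in $\mathrm{Gal}(\Omega/K)$. For a pro-$p$ group the descending central series has trivial intersection (every finite $p$-quotient is nilpotent), its terms are characteristic and hence stable under conjugation by $\delta_{i}$, and all of your graded-piece computations go through verbatim; the two-variable machinery of Section \ref{3} plays no role in this particular lemma.

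A secondary point: ``the residue of $a_{j}a^{-1}$ is transformed by the idempotent and hence stabilizes after one further iteration'' is too quick, since on a graded piece the passage $a_{j}\mapsto a_{j+1}$ is affine rather than linear (there is a constant contribution from $\phi$ applied to the deeper part of $a_{j}$), so idempotency of $\epsilon_{i,m}$ alone does not force stabilization; similarly, your estimate that the cyclic-shift correction $r_{j}$ is ``strictly deeper than $y_{j}$'' is not automatic, because $y_{j}$ itself need not be deep. The clean induction---and the one that makes both halves work simultaneously---tracks the depth of the eigen-defect $d_{j}\coloneqq\delta_{i}a_{j}\delta_{i}^{-1}a_{j}^{-\chi_{i}^{m}(\delta_{i})}$: if $d_{j}$ lies in the $r$-th term of the central series, then every factor $\delta_{i}^{k}a_{j}^{\chi_{i}^{-m}(\delta_{i}^{k})}\delta_{i}^{-k}$ agrees with $a_{j}$ modulo that term, whence $a_{j+1}a_{j}^{-1}$ lies in the $r$-th term and $d_{j+1}$ lies in the $(r+1)$-st. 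This yields convergence and the exact transformation law in the limit at once, and it is exactly the hypothesis your commutator corrections need in order to land one step deeper.
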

\begin{proof}
    The assertion follows from the same argument as \cite[Lemma 2.1]{Sh}. 
\end{proof}

Let $A$ denote the maximal abelian quotient of $F^{1}G=\Gal(\Omega/K(p^{\infty}))$, which is naturally endowed with the structure of a $\Lambda \coloneqq \mathbb{Z}_{p}[[\Gal(K(p^{\infty})/K(p))]]$-module. In the following lemma and remark, we determine the structure of this $\Lambda$-module completely:

\begin{lemma}\label{Wintenberger}
    Suppose the class number of $K(p)$ is not divisible by $p$ and there are exactly two primes of $K(p^{2})$ lying above $p$. Let $\boldsymbol{m} \in I_{0}$ and let $A^{\boldsymbol{m}} \coloneqq \epsilon_{1,m_{1}}\epsilon_{2,m_{2}}A$. Then the following assertions hold:
\begin{enumerate}
\item[(1)] If $\boldsymbol{m} \neq (1,p)$ or $(p,1)$, then $A^{\boldsymbol{m}}$ is a cyclic $\Lambda$-module.
\item[(2)] $A^{(p,1)}=A^{(1,p)}$ is a quotient of the annihilator ideal $\Ann_{\Lambda}(\mathbb{Z}_{p}(1))$ of $\mathbb{Z}_{p}(1)$.
\end{enumerate}
\end{lemma}
\begin{proof}
We have the five-term exact sequence between $\mathbb{F}_{p}[\Delta]$-modules \cite[(1.6.7)]{NSW}
\begin{align*} 
0 &\to H^{1}(\Gal(K(p^{\infty})/K(p)), \mathbb{F}_{p}) \to  H^{1}(G , \mathbb{F}_{p}) \to
\Hom_{\Gal(K(p^{\infty})/K(p))}(A, \mathbb{F}_{p}) 
\\
&\to H^{2}(\Gal(K(p^{\infty})/K(p)), \mathbb{F}_{p}) \to H^{2}(G, \mathbb{F}_{p}). 
\end{align*} Note that
$
\Hom_{\Gal(K(p^{\infty})/K(p))}(A, \mathbb{F}_{p}) 
=\oplus_{\boldsymbol{m} \in I_{0} \setminus \{ (p,1) \}}\Hom_{\Gal(K(p^{\infty})/K(p))}(A^{\boldsymbol{m}}, \mathbb{F}_{p}).
$
 To prove the assertion of (1), it suffices to compute the dimension of each eigenspace of this cohomology group. First, we compute the dimension of each eigenspace of $H^{1}(G, \mathbb{F}_{p})$. The Kummer exact sequence $1 \to \mu_{p} \to \mathbb{G}_{m} \to \mathbb{G}_{m} \to 1$ gives
\[
1 \to O_{K(p)}[1/p]^{\times}/p \to H^{1}(G, \mu_{p}) \to H^{1}_{\et}(O_{K(p)}[1/p], \mathbb{G}_{m})[p] \to 0.
\] Since the class number of $K(p)$ is prime to $p$, the group $H^{1}_{\et}(O_{K(p)}[1/p], \mathbb{G}_{m})[p]$, which is nothing but the $p$-torsion subgroup of the $p$-class group, is trivial. Hence we obtain an isomorphism
\[
    O_{K(p)}[1/p]^{\times}/p \xrightarrow{\sim} H^{1}(G, \mu_{p}).
\] Since there are only two primes of $K(p)$ lying above $p$ by assumption, the dimension of the left hand side is equal to $[K(p):K]+2$ by Dirichlet's unit theorem. Moreover, by \cite[(8.7.2) Proposition]{NSW}, there is an isomorphism of $\mathbb{Q}[\Delta]$-modules
\[
O_{K(p)}[1/p]^{\times} \otimes \mathbb{Q} \cong \mathbb{Q}[\Delta]  \oplus \mathbb{Q}.
\] 
By decomposing the $p$-unit group into the product of the torsion-part and the free-part, it follows that the dimension of $\chi^{\boldsymbol{m}}$-component of $H^{1}(G, \mathbb{F}_{p})$ is at most one if $\boldsymbol{m} \in I_{0} \setminus \{ (1,p), (p,1), (p-1,p-1) \}$ and otherwise at most two. By a counting argument, all these inequalities are equalities. Since $\Delta$ acts trivially on $H^{i}(\Gal(K(p^{\infty})/K(p)), \mathbb{F}_{p})$ ($i=1,2$), (1) follows from the five-term exact sequence.

By our assumption, there exists a unique prime of $K(p^{\infty})$ above $\p$ which we denote by the same letter $\p$ (see Remark \ref{rmk:secondcond}). Hence we have an isomorphism
\[
\Gal(K(p^{\infty})_{\p}/K(p)_{\p}) \xrightarrow{\sim} \Gal(K(p^{\infty})/K(p)).
\]
Moreover, by a result of Wintenberger \cite[TH\'EOR\`EME]{Win}, the decomposition group of $A^{(1,p)}$ at $\p$ is isomorphic to
\[
\Ann_{\mathbb{Z}_{p}[[\Gal(K(p^{\infty})_{\p}/K(p)_{\p})]]}(\mathbb{Z}_{p}(1))=\Ann_{\Lambda}(\mathbb{Z}_{p}(1)).
\] Hence to prove (2), it suffices to show that $A^{(1,p)}$ coincides with its decomposition subgroup at $\p$. First, note that 
\[
 \Hom_{\Delta}(A^{(1,p)}, \mathbb{F}_{p}(1)) \cong \Hom_{\Delta}(\Gal(\Omega/K(p)), \mathbb{F}_{p}(1)) \cong O_{K}[1/p]^{\times}/p \cong \mathbb{F}_{p}^{2}
\] is generated by $\pi$ and $\bar{\pi}$. Hence the assertion is equivalent to saying that the images of $\pi$ and $\bar{\pi}$ in $K_{\p}^{\times}/(K_{\p}^{\times})^{p}$ still span a two-dimensional subspace. This assertion is then equivalent to saying that the projection of $\bar{\pi} \in O_{K_{\p}}^{\times}$ to the group of principal units $1+\p O_{K_{\p}} \cong \mathbb{Z}_{p}$ is a generator. However, this follows since the Frobenius element at $\bar{\p}$ in $\Gal(K(\p)/K)$, which coincides with $\bar{\pi}$ under $\Gal(K(\p)/K) \cong (O_{K_{\p}}/\p)^{\times}/O_{K}^{\times}$, is a generator by our assumption on the number of primes above $p$.
\end{proof}

In fact, we can determine the $\Lambda$-module structure of $A$ and the structure of the pro-$p$ group $\Gal(\Omega/K(p^{\infty}))$ completely, though they are not necessary to prove Theorem \ref{thm:main}. Here we record the precise statement and provide its proof for the interested reader:

\begin{proposition}\label{prp:Iwasawa} 
     Suppose the class number of $K(p)$ is not divisible by $p$ and there are exactly two primes of $K(p^{2})$ above $p$. The following assertions hold.
     \begin{enumerate}
        \item There exists an isomorphism
        \[
         \Lambda^{[K(p):K]-1} \oplus \Ann_{\Lambda}(\mathbb{Z}_{p}(1)) \xrightarrow{\sim} A
        \] between $\Lambda$-modules.
        \item The group  $\Gal(\Omega/K(p^{\infty}))$ is a free pro-$p$ group (of countably infinite rank).
     \end{enumerate}
\end{proposition}
\begin{proof}
    (1) The proof of Lemma \ref{Wintenberger} shows that there exists a surjective homomorphism 
    \[
    f \colon \Lambda^{[K(p):K]-1} \oplus \Ann_{\Lambda}(\mathbb{Z}_{p}(1)) \to A
    \] between $\Lambda$-modules. Moreover, by \cite[Corollaire 2.7]{NQD}, we have
    \[
    \dim_{\mathrm{Frac}(\Omega)} A \otimes_{\Lambda} \mathrm{Frac}(\Omega)=[K(p):K]. 
    \] This forces the kernel of $f$ to be a torsion $\Lambda$-module. However, since the left-hand side is obviously torsion-free, the assertion follows.

    (2) The assertion (1) implies that $A=\Gal(\Omega/K(p^{\infty}))^{\ab}$ is $\mathbb{Z}_{p}$-torsion free. Moreover, we have $H^{2}(\Gal(\Omega/K(p^{\infty})), \mathbb{Q}_{p}/\mathbb{Z}_{p})=0$, i.e. the weak Leopoldt conjecture holds for the $\mathbb{Z}_{p}^{2}$-extension $K(p^{\infty})/K(p)$ by \cite[Theoreme 2.2]{NQD}, since it contains the cyclotomic $\mathbb{Z}_{p}$-extension. Now, by considering a long exact sequence associated to 
    \[
    0 \to \mathbb{F}_{p} \to \mathbb{Q}_{p}/\mathbb{Z}_{p} \xrightarrow{ p}  \mathbb{Q}_{p}/\mathbb{Z}_{p} \to 0,
    \] we obtain $H^{2}(\Gal(\Omega/K(p^{\infty})), \mathbb{F}_{p})=0$. This concludes the proof.
\end{proof}

Now we construct elements $\sigma_{\boldsymbol{m}} \in F^{\boldsymbol{m}}G_{K}$ for $\boldsymbol{m} \in I_{0}$. In the rest of this section, since our construction relies on Lemma \ref{Wintenberger}, we suppose $p$ satisfies the following two assumptions of Theorem \ref{thm:main}:

\medskip

\fbox{
    \begin{minipage}{0.7\linewidth}
        \begin{enumerate}
            \item  The class number of $K(p)$ is not divisible by $p$.
            \item  There are exactly two primes of $K(p^{2})$ above $p$. 
        \end{enumerate}
    \end{minipage}
}

\medskip

{\bf Construction. }   For $\boldsymbol{m} \in I_{0}$, we choose an element $t_{\boldsymbol{m}} \in \Gal(\Omega/K(p^{\infty}))$, as follows:

\begin{itemize}
\item If $\boldsymbol{m} \in I_{0} \setminus \{(p,1), (1,p), (p-1,p-1)\}$, we choose a lift $t_{\boldsymbol{m}} \in \Gal(\Omega/K(p^{\infty}))$ of a generator of $A^{\boldsymbol{m}}$ as $\mathbb{Z}_{p}[[\Gal(K(p^{\infty})/K(p))]]$-module. 

\item For $\boldsymbol{m}=(p, 1)$ and $(1,p)$, fix a surjection from $M$ given above to $A^{(p,1)}$ and let $t_{(p,1)}$ and $t_{(1,p)}$ be arbitrary lifts of the images of $v_{1}$ and $v_{2}$, respectively.

\item For $\boldsymbol{m}=(p-1,p-1)$, set $t_{(p-1,p-1)}=[\gamma_{1}, \gamma_{2}]$.
\end{itemize}

For every $\boldsymbol{m} \in I_{0}$, let
\[
\sigma_{\boldsymbol{m}} \coloneqq \left( t_{\boldsymbol{m}}^{(1,m_{1})} \right)^{(2,m_{2})} \quad \text{and} \quad g_{\boldsymbol{m}} \coloneqq \sigma_{\boldsymbol{m}}.
\] 

Note that $\sigma_{(p-1,p-1)}=g_{(p-1,p-1)}=[\gamma_{1}, \gamma_{2}]$ since $\gamma_{1}$ and $\gamma_{2}$ commute with $\delta_{1}$ and $\delta_{2}$. The elements $\{ \sigma_{\boldsymbol{m}} \}_{\boldsymbol{m} \in I_{0}}$ satisfy the following properties:

\begin{lemma}\label{lmm:lift_1}
 For $\boldsymbol{m} \in I_{0}$, the following two assertions hold.
\begin{enumerate}
\item[(1)] The element $\sigma_{\boldsymbol{m}}$ is contained in $F^{\boldsymbol{m}}G$, and its image in $\mathfrak{g}_{\lvert \boldsymbol{m} \rvert}$ is contained in the $\chi^{\boldsymbol{m}}$-isotypic component.
\item[(2)] The element $\kappa_{\boldsymbol{m}}(\sigma_{\boldsymbol{m}})$ generates $\kappa_{\boldsymbol{m}}(F^{1}G)$, which is nonzero.
\end{enumerate}
\end{lemma}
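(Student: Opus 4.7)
The proof plan is to analyze the image of $\sigma_{\boldsymbol{m}}$ in the two-variable graded pieces $\gr_i^{\boldsymbol{k}} G$ for Part~(1), and in the abelianization $A = (F^{1} G)^{\mathrm{ab}} = \mathrm{Gal}(\Omega/K(p^{\infty}))^{\mathrm{ab}}$ for Part~(2). The central input throughout is the $\delta_i$-eigenvalue identity $\delta_i \sigma_{\boldsymbol{m}} \delta_i^{-1} = \sigma_{\boldsymbol{m}}^{\chi_i^{m_i}(\delta_i)}$ from Lemma~\ref{lmm:idemp}, combined with Corollary~\ref{cor:Gal_2}, which states that $\delta_i$ acts on $\gr_i^{\boldsymbol{k}} G$ as multiplication by the $(p-1)$-th root of unity $\chi_i^{k_i}(\delta_i)$.

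For Part~(1), these two inputs force the image of $\sigma_{\boldsymbol{m}}$ in $\gr_i^{\boldsymbol{k}} G$ to vanish whenever $k_i \not\equiv m_i \pmod{p-1}$, since the two scalars $\chi_i^{k_i}(\delta_i)$ and $\chi_i^{m_i}(\delta_i)$ are then distinct $(p-1)$-th roots of unity. A case-by-case check over $\boldsymbol{m} \in I_{0}$ handles every possible intermediate $\boldsymbol{k}$: for $\boldsymbol{m} \leq (p-1,p-1)$, the only $\boldsymbol{k} \in \mathbb{Z}_{\geq 1}^{2}$ with $\boldsymbol{k} \leq \boldsymbol{m}$ satisfying both congruences is $\boldsymbol{k}=\boldsymbol{m}$ itself; for the borderline $\boldsymbol{m} = (p,1)$ or $(1,p)$, the otherwise threatening $\boldsymbol{k}=(1,1)$ is ruled out by the vanishing $\mathfrak{g}_{1}=\mathfrak{g}_{2}=0$ coming from $F^{1}\Gamma_{1,1}=F^{2}\Gamma_{1,1}=F^{3}\Gamma_{1,1}$; and for $\boldsymbol{m}=(p-1,p-1)$ the element $\sigma_{\boldsymbol{m}} = [\gamma_{1},\gamma_{2}]$ is centralized by $\delta_{1},\delta_{2}$, so the same eigenvalue comparison eliminates all $\boldsymbol{k} \in \mathbb{Z}_{\geq 1}^{2}$ strictly below $(p-1,p-1)$. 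Chaining single-coordinate refinements from $F^{(1,1)}G$ up to $F^{\boldsymbol{m}}G$ then yields $\sigma_{\boldsymbol{m}} \in F^{\boldsymbol{m}}G$; the $\chi^{\boldsymbol{m}}$-isotypy of its image in $\mathfrak{g}_{|\boldsymbol{m}|}$ is immediate from the eigenvalue property.

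For Part~(2), the character $\kappa_{\boldsymbol{m}}$ factors through $A$ and, by $\Delta$-equivariance, through the isotypic component $A^{\boldsymbol{m}} = \epsilon_{1,m_{1}}\epsilon_{2,m_{2}} A$. Since $A$ is abelian the iterated idempotents defining $\sigma_{\boldsymbol{m}}$ collapse to single applications in the Iwasawa algebra, so the image of $\sigma_{\boldsymbol{m}}$ in $A^{\boldsymbol{m}}$ equals $\epsilon_{1,m_{1}}\epsilon_{2,m_{2}}[t_{\boldsymbol{m}}]$, which by the construction of $t_{\boldsymbol{m}}$ is a generator of $A^{\boldsymbol{m}}$ as a $\mathbb{Z}_{p}[[\mathrm{Gal}(K(p^{\infty})/K(p))]]$-module. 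For $\boldsymbol{m} \notin \{(p,1),(1,p)\}$, Lemma~\ref{lmm:str_of_A}(1) then gives $\kappa_{\boldsymbol{m}}(F^{1} G) = \mathbb{Z}_{p} \cdot \kappa_{\boldsymbol{m}}(\sigma_{\boldsymbol{m}})$, using that $\mathrm{Gal}(K(p^{\infty})/K(p))$ acts on $\mathbb{Z}_{p}(\boldsymbol{m})$ through a character with image in $1+p\mathbb{Z}_{p} \subset \mathbb{Z}_{p}^{\times}$, so the ambient $\mathbb{Z}_{p}[[\mathrm{Gal}(K(p^{\infty})/K(p))]]$-span of $\kappa_{\boldsymbol{m}}(\sigma_{\boldsymbol{m}})$ inside $\mathbb{Z}_{p}(\boldsymbol{m})$ coincides with its $\mathbb{Z}_{p}$-span.

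The main obstacle is the case $\boldsymbol{m} \in \{(p,1),(1,p)\}$, where $A^{\boldsymbol{m}}$ is no longer cyclic but only a quotient of the module $M$ from Lemma~\ref{lmm:str_of_A}(2). For $\boldsymbol{m}=(p,1)$, applying $\kappa_{(p,1)}$ to the defining relation $(\gamma_{1}-\chi_{1}(\gamma_{1}))v_{2} = (\gamma_{2}-\chi_{2}(\gamma_{2}))v_{1}$ and using $\chi^{(p,1)}(\gamma_{1}) = \chi_{1}^{p}(\gamma_{1})$ and $\chi^{(p,1)}(\gamma_{2}) = \chi_{2}(\gamma_{2})$ kills the right-hand side and yields $(\chi_{1}^{p}(\gamma_{1}) - \chi_{1}(\gamma_{1}))\kappa_{(p,1)}(v_{2}) = 0$; since $\chi_{1}(\gamma_{1})$ topologically generates $1+p\mathbb{Z}_{p}$, the scalar $\chi_{1}(\gamma_{1})(\chi_{1}^{p-1}(\gamma_{1})-1)$ lies in $p\mathbb{Z}_{p}^{\times}$, forcing $\kappa_{(p,1)}(v_{2})=0$ by torsion-freeness of $\mathbb{Z}_{p}(\boldsymbol{m})$. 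Therefore $\kappa_{(p,1)}(F^{1} G) = \mathbb{Z}_{p} \cdot \kappa_{(p,1)}(v_{1}) = \mathbb{Z}_{p} \cdot \kappa_{(p,1)}(\sigma_{(p,1)})$, and the symmetric computation handles $(1,p)$.
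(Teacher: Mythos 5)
Your treatment of Part (1) for the non-borderline indices and of Part (2) for the borderline indices matches the paper (the latter essentially reproduces Lemma \ref{lmm:(1,p)(p,1)}), but there is a genuine gap at the crucial step of Part (1) for $\boldsymbol{m}=(p,1)$ and $(1,p)$. You dismiss the ``threatening'' index $\boldsymbol{k}=(1,1)$ by invoking $\mathfrak{g}_{1}=\mathfrak{g}_{2}=0$, i.e.\ $F^{1}\Gamma_{1,1}=F^{3}\Gamma_{1,1}$, but this concerns the \emph{one-variable} filtration, whereas what is needed is that the image of $\sigma_{(p,1)}$ in the \emph{two-variable} graded piece $\gr^{(1,1)}_{1}G=F^{(1,1)}G/F^{(2,1)}G$ vanishes. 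Since $F^{(1,1)}G=F^{1}G=F^{3}G$ by Lemma \ref{lmm:Gal_1}, the vanishing of $\mathfrak{g}_{2}$ gives no information about $F^{(1,1)}G/F^{(2,1)}G$; indeed $\gr^{(1,1)}_{1}\tilde{\Gamma}$ embeds into $\gr^{(2,1)}_{1}\Pi\oplus\gr^{(1,2)}_{1}\Pi$, which is nonzero, so there is no cheap collapse. The eigenvalue comparison also genuinely fails here: Corollary \ref{cor:Gal_2} only controls the $\delta_{1}$-action on $\gr^{\boldsymbol{k}}_{1}$-pieces via $\chi_{1}^{k_{1}}$, and $\chi_{1}^{p}(\delta_{1})=\chi_{1}(\delta_{1})$ coincides with the eigenvalue on $\gr^{(1,1)}_{1}G$. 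The paper bridges exactly this step with Lemma \ref{lmm:(1,2)(2,1)} ($t_{(p,1)}\in F^{(2,1)}G$), whose proof is substantive: one shows via Theorem \ref{thm:Na} and Lemma \ref{lmm:(1,p)(p,1)} that $\alpha_{1,1}(t_{(p,1)})$ is divisible by $T_{1}^{p-1}$, deduces $s(t_{(p,1)})(x_{2})x_{2}^{-1}\in\Pi(2,2)$, and then uses a free-Lie computation showing that $\Pi(2,0)/\Pi(3,0)$ has no nonzero element invariant under conjugation by $y$ to conclude $s(t_{(p,1)})(x_{1})x_{1}^{-1}\in\Pi(3,1)$. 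Only after that does the $\delta_{1}$-eigenvalue bootstrap carry $\sigma_{(p,1)}$ from $F^{(2,1)}G$ up to $F^{(p,1)}G$. Your argument supplies no substitute for this lemma.

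A second, smaller gap is in Part (2) at $\boldsymbol{m}=(p-1,p-1)$: you assert that the class of $t_{(p-1,p-1)}=[\gamma_{1},\gamma_{2}]$ generates $A^{(p-1,p-1)}$ ``by the construction of $t_{\boldsymbol{m}}$'', but unlike the other indices, $t_{(p-1,p-1)}$ is not chosen as a lift of a generator --- it is prescribed to be the commutator, and Lemma \ref{lmm:str_of_A}(1) only says $A^{(p-1,p-1)}$ is cyclic, not that this particular element generates it. The paper proves this by identifying $A^{(p-1,p-1)}=A^{\Delta}$ with $\mathrm{Gal}(L/K_{\infty})^{\mathrm{ab}}$, invoking \cite[(10.7.13) Theorem]{NSW} to see that $\mathrm{Gal}(L/K)$ is free pro-$p$ of rank two on $\gamma_{1},\gamma_{2}$, and then Ihara's theorem that $\mathrm{Gal}(L/K_{\infty})^{\mathrm{ab}}$ is free of rank one over $\mathbb{Z}_{p}[[\mathrm{Gal}(K_{\infty}/K)]]$ generated by $[\gamma_{1},\gamma_{2}]$. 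This input needs to be added to make your Part (2) complete.
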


Before proving the lemma, we first prove the following two lemmas concerning the case where $m_{1}=1$ or $m_{2}=1$.

\begin{lemma}\label{lmm:(1,p)(p,1)}
    Let  $n \geq 2$ be an integer such that $n \equiv 1 \bmod p-1$. Then we have $\kappa_{(1,n)}(t_{(p,1)})=0$. Similarly, for such $n$, we have $\kappa_{(n,1)}(t_{(1,p)})=0$.
\end{lemma}
\begin{proof}
Note that $\kappa_{(n,1)}$ factors through $A^{(1,1)}$, and we have 
\[
(\gamma_{2}-\chi_{2}(\gamma_{2}))t_{(p,1)}=(\gamma_{1}-\chi_{1}(\gamma_{1}))t_{(1,p)}
\] in $A^{(1,1)}$ by definitions of $t_{(p,1)}$ and $t_{(1,p)}$. Hence we have
\begin{align*}
\kappa_{(1,n)}((\gamma_{2}-\chi_{2}(\gamma_{2}))t_{(p,1)})
&= (\chi_{2}^{n}(\gamma_{2})-\chi_{2}(\gamma_{2}))\kappa_{(1,n)}(t_{(p,1)}) \\
&= \kappa_{(1,n)}((\gamma_{1}-\chi_{1}(\gamma_{1}))t_{(1,p)}) \\
&= (\chi_{1}(\gamma_{1})-\chi_{1}(\gamma_{1}))\kappa_{p,1}(t_{(1,p)})=0.
\end{align*} Since $\chi_{2}^{n-1}(\gamma_{2}) \neq 1$, it follows that $\kappa_{(1,n)}(t_{(p,1)})=0$. The same argument shows $\kappa_{(n,1)}(t_{(1,p)})=0$, as desired.
\end{proof}

\begin{lemma}\label{lmm:(1,2)(2,1)}
We have $t_{(p,1)} \in F^{(2,1)}G$ and $t_{(1,p)} \in F^{(1,2)}G$.
\end{lemma}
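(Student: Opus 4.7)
The plan is to use the two-variable filtration machinery of Section \ref{3} together with Corollary \ref{cor:Gal_2} to reduce the statement to a $\mathbb{Z}_p$-torsion-freeness argument. By the $\p \leftrightarrow \bar\p$ symmetry of the whole construction (exchanging $\chi_1 \leftrightarrow \chi_2$, $\delta_1 \leftrightarrow \delta_2$, $\gamma_1 \leftrightarrow \gamma_2$, $v_1 \leftrightarrow v_2$), it suffices to prove $t_{(p,1)} \in F^{(2,1)}G$. Since $t_{(p,1)} \in \mathrm{Gal}(\Omega/K(p^{\infty}))$, Lemma \ref{lmm:Gal_1}(2) immediately gives $t_{(p,1)} \in F^{(1,1)}G$, so the task reduces to showing that the image $\tau$ of $t_{(p,1)}$ in $\gr^{(1,1)}_{1}G \coloneqq F^{(1,1)}G/F^{(2,1)}G$ vanishes.

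The key input is Corollary \ref{cor:Gal_2}: the group $\mathrm{Gal}(K(E[p^{\infty}])/K(E[\bar\p^{\infty}]))$ (which contains $\gamma_{2}$) acts on $\gr^{(1,1)}_{1}G$ by the scalar $\chi_{1}$, so $\gamma_{2}$ operates as multiplication by $\chi_{1}(\gamma_{2})$. Combining this with the defining relation from Lemma \ref{lmm:str_of_A}(2), namely $(\gamma_{2}-\chi_{2}(\gamma_{2}))v_{1}=(\gamma_{1}-\chi_{1}(\gamma_{1}))v_{2}$ in $A$, which lifts to the analogous congruence in $\mathrm{Gal}(\Omega/K(p^{\infty}))$ modulo elements that project into $F^{(2,1)}G$, and using $\chi_{2}(\gamma_{2})=1$, the projection to $\gr^{(1,1)}_{1}G$ yields
\[
(\chi_{1}(\gamma_{2})-1)\tau \;=\; (\gamma_{1}-\chi_{1}(\gamma_{1}))\tau',
\]
where $\tau'$ is the image of $t_{(1,p)}$ in $\gr^{(1,1)}_{1}G$. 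A parallel weight analysis for the right-hand side (using the dual form of Corollary \ref{cor:Gal_2} on $\gr^{(1,1)}_{2}G$, where $\gamma_{1}$ acts by $\chi_{2}$, together with $\chi_{1}(\gamma_{1})=1$) will show that this right-hand side vanishes, giving $(\chi_{1}(\gamma_{2})-1)\tau=0$.

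To conclude, observe that $\chi_{1}(\gamma_{2})-1$ is a \emph{nonzero} element of $p\mathbb{Z}_{p}$, because $\gamma_{2}$ is chosen to topologically generate $\mathrm{Gal}(K(p^{\infty})/K(\p\bar\p^{\infty})) \cong \mathbb{Z}_{p}$, so $\chi_{1}(\gamma_{2})$ topologically generates $1+p\mathbb{Z}_{p}$. On the other hand, $\gr^{(1,1)}_{1}G$ embeds into the $\mathbb{Z}_{p}$-free module $F^{(1,1)}\Gamma_{1,1}/F^{(2,1)}\Gamma_{1,1}$ of finite rank (cf.\ \cite[Corollary (1.16)(ii)]{NT}). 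Hence $\mathbb{Z}_{p}$-torsion-freeness forces $\tau=0$.

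The principal obstacle I expect is cleanly executing the ``parallel analysis'' for the right-hand side: one must verify that $(\gamma_{1}-\chi_{1}(\gamma_{1}))\tau'$ is not merely $p$-torsion but actually zero in $\gr^{(1,1)}_{1}G$, which requires careful bookkeeping of how the commutator $[\gamma_{1},t_{(1,p)}]$ is distributed across the two filtrations $\gr^{(1,1)}_{1}G$ and $\gr^{(1,1)}_{2}G$ simultaneously, while avoiding any circularity with the construction of $\sigma_{(1,p)}$ itself.
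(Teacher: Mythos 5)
Your proposal has a genuine gap at its central step, and it sits exactly where the real difficulty of the lemma lies. After projecting the relation of Lemma \ref{lmm:str_of_A}(2) into $\gr^{(1,1)}_{1}G=F^{(1,1)}G/F^{(2,1)}G$, you obtain an identity of the form $(\gamma_{2}-\chi_{2}(\gamma_{2}))\tau=(\gamma_{1}-\chi_{1}(\gamma_{1}))\tau'$ in which one side is killed by Corollary \ref{cor:Gal_2} but the other involves the conjugation action on $\gr^{(1,1)}_{1}G$ of the generator in the \emph{other} direction (the one with $\chi_{1}$ trivial and $\chi_{2}$ nontrivial, in the paper's normalization from Lemma \ref{lmm:(1,p)(p,1)} and Proposition \ref{prp:lift}; note your values $\chi_{1}(\gamma_{1})=1$, $\chi_{2}(\gamma_{2})=1$ are the opposite of the convention the relation in Lemma \ref{lmm:str_of_A}(2) is written in). Corollary \ref{cor:Gal_2} gives no purchase on that cross action: it describes the action of $\chi_{2}$-trivial elements on $\gr^{\boldsymbol{m}}_{1}$ and of $\chi_{1}$-trivial elements on $\gr^{\boldsymbol{m}}_{2}$, so invoking its ``dual form on $\gr^{(1,1)}_{2}G$'' says nothing about the element $(\gamma_{1}-\chi_{1}(\gamma_{1}))\tau'$, which lives in $\gr^{(1,1)}_{1}G$. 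This cross action is precisely the nontrivial content of the lemma: the module $\gr^{\boldsymbol{m}}_{1}\Pi$ carries a nontrivial $\mathbb{Z}_{p}[[\Pi/\Pi(1,0)]]$-action (the ``$y$-direction''), and the paper's proof has to confront it head-on. Indeed the paper argues via the meta-abelian quotient: Nakamura's formula and Lemma \ref{lmm:(1,p)(p,1)} show $\alpha_{1,1}(t_{(p,1)})$ lies in the ideal $(T_{1}^{p-1})$, whence $s(t_{(p,1)})(x_{2})x_{2}^{-1}\in\Pi(2,2)$, and then the genuinely group-theoretic Claim --- that conjugation by $y$ has no nonzero fixed vectors in $\Pi(2,0)/\Pi(3,0)$ --- forces $s(t_{(p,1)})(x_{1})x_{1}^{-1}\in\Pi(3,0)=\Pi(3,1)$. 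Your strategy assumes away exactly the statement that this fixed-point computation supplies, and you yourself flag it as the ``principal obstacle'' without an argument.

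There are also secondary gaps you would need to fill even if the cross-term vanished: the passage from a relation in $A=\mathrm{Gal}(\Omega/K(p^{\infty}))^{\mathrm{ab}}$ to a congruence modulo $F^{(2,1)}G$ requires $[F^{1}G,F^{1}G]\subset F^{(2,1)}G$, a compatibility of the two-variable filtration with commutators that the paper never states and that you should prove; and the torsion-freeness you cite from \cite[Corollary (1.16)(ii)]{NT} concerns the one-variable graded quotients $F^{m}\Gamma_{1,1}/F^{m+1}\Gamma_{1,1}$, not the two-variable quotient $F^{(1,1)}\Gamma/F^{(2,1)}\Gamma$ into which $\gr^{(1,1)}_{1}G$ embeds, so the final ``$(\chi_{1}(\gamma_{2})-1)\tau=0\Rightarrow\tau=0$'' step also rests on an unproved ingredient.
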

\begin{proof}
    
Let $s \colon F^{1}G_{K} \to F^{3}\Gamma^{\dag}_{1,1}$ be the lift of $\rho_{X,p}$ constructed in Section \ref{2.2}. Then the map $s$ factors through $F^{1}G_{K} \to F^{1}G$, and we denote the resulting homomorphism $F^{1}G \to F^{3}\Gamma^{\dag}_{1,1}$ by the same letter. By Lemma \ref{lmm:contain}, to show $t_{(p,1)} \in F^{(2,1)}G$, it suffices to prove that $s(t_{(p,1)})(x_{2})x_{2}^{-1} \in \Pi(2,2)$ and $s(t_{(p,1)})(x_{1})x_{1}^{-1} \in \Pi(3,1)$.

First, we prove that $s(t_{(p,1)})(x_{2})x_{2}^{-1} \in \Pi(2,2)$. To verify this inclusion, it is enough to show that the power series $\alpha_{1,1}(t_{(p,1)})$ is divisible by $T_{1}$. In fact, then we would have 
\begin{align*}
    s(t_{(p,1)})(x_{2})x_{2}^{-1}
    &= \alpha_{1,1}(t_{(p,1)})[x_{2},z] \\
    &= \frac{\alpha_{1,1}(t_{(p,1)})}{T_{1}}[x_{1}, [x_{2},z]] \in \Pi(2)/[\Pi(2), \Pi(2)].
\end{align*} by definition of $\alpha_{1,1}$. Since it holds that 
\[
[\Pi(2), \Pi(2)]=[\Pi(1,1), \Pi(1,1)] \subset \Pi(2,2)
\] and $[x_{1},[x_{2},z]] \in \Pi(2,2)$, we have $s(t_{(p,1)})(x_{2})x_{2}^{-1} \in \Pi(2,2)$ as desired.

In view of the explicit formula for $\alpha_{1,1}$ (Theorem \ref{thm:Na}), it suffices to show that 
\[
\kappa_{\boldsymbol{n}}(t_{(p,1)})=0 \quad \text{for every }  \boldsymbol{n}=(1, n_{2}) \in I.
\] For $\boldsymbol{n}=(1,n_{2}) \in I$, we have $\kappa_{\boldsymbol{n}}(t_{(p,1)})=0$ unless $n_{2} \equiv 1 \bmod p-1$, since $\kappa_{(1,n)}$ factors through $A^{(1,n)}$ while $t_{(p,1)}$ is a lift of an element of $A^{(p,1)}=A^{(1,p)}$. Moreover, by Lemma \ref{lmm:(1,p)(p,1)} above, we also have $\kappa_{\boldsymbol{n}}(t_{(p,1)})=0$ when $n_{2} \equiv 1 \bmod p-1$. This implies that $\alpha_{1,1}(t_{(p,1)})$ is divisible by $T_{1}$ as desired.

In the following, we prove $s(t_{(p,1)})(x_{1})x_{1}^{-1}  \in \Pi(3,1)=\Pi(3,0)$ (here, we use Lemma \ref{lmm:incl}). We have already shown that this element is contained in $\Pi(2,0)$ and, from the equality $s(t_{(p,1)})(z)=z$ and $s(t_{(p,1)})(x_{2})x_{2}^{-1} \in \Pi(2,2)$, it follows that \[[s(t_{(p,1)})(x_{1})x_{1}^{-1}, x_{2}] \in \Pi(3,2) \subset \Pi(3,0).\] In particular, we have the following:

\begin{center}
$s(t_{(p,1)})(x_{1})x_{1}^{-1} \in \Pi(2,0)/\Pi(3,0)$ is invariant under the conjugation of $x_{2}$.
\end{center}

Hence it suffices to show the following claim: 

\medskip

{\bf Claim.} The identity is the only element of $\Pi(2,0)/\Pi(3,0)$ which is invariant under the conjugation of $x_{2}$. 

\medskip

To prove the claim, first note that, for every $m \geq 1$, the group $\Pi(m,0)$ is just the $m$-th component of the descending central series of $\Pi(1,0)$, which is a free pro-$p$ group on the set $\{ w_{n} \}_{n \geq 1}$ where $w_{0} \coloneqq x_{1}$ and $w_{n} \coloneqq [x_{2}, w_{n-1}]$ for every $n \geq 1$. 

For $n \geq 1$, let $F_{n}$ be the quotient of $\Pi(1,0)$ by the normal closure of $\{ w_{i} \}_{i \geq n}$. Then $F_{n}$ is a free pro-$p$ group on the set $\{ w_{i} \}_{0 \leq i < n}$, and $\Pi(1,0)$ is isomorphic to $\varprojlim_{n} F_{n}$. Note that the commutator map induces an isomorphism
\[
F_{n}/F_{n}(2) \wedge F_{n}/F_{n}(2) \xrightarrow{\sim} F_{n}(2)/F_{n}(3)
\] for every $n \geq 1$, since the Lie algebra associated to the descending central series of $F_{n}$ is freely generated by the image of $\{ w_{i} \}_{0 \leq i < n}$ in $F_{n}/F_{n}(2)$. In other words, the quotient $F_{n}(2)/F_{n}(3)$ is a free $\mathbb{Z}_{p}$-module with basis $\{ [w_{i}, w_{j}] \}_{0 \leq i < j < n}$. Therefore, we have
\[
 \Pi(2,0)/\Pi(3,0)=\varprojlim_{n} F_{n}(2)/F_{n}(3)=\prod_{0 \leq i < j} \mathbb{Z}_{p}[w_{i}, w_{j}].
\] Observe that the action of $x_{2}$ sends $[w_{i}, w_{j}] \in \Pi(2,0)/\Pi(3,0)$ to 
\[
[w_{i+1}w_{i}, w_{j+1}w_{j}]=[w_{i+1}, w_{j+1}]+[w_{i+1}, w_{j}]+[w_{i}, w_{j+1}]+[w_{i}, w_{j}].
\] If $v=(v_{i,j}) \in \prod_{0 \leq i < j} \mathbb{Z}_{p}[w_{i}, w_{j}]=\Pi(2,0)/\Pi(3,0)$ is invariant under the conjugation of $x_{2}$, then one can show that $v_{0,j}=0$ for every $j>0$ by induction on $j$. By repeating induction for every $i>0$, it follows that $v_{i,j}=0$ for every $0 \leq i < j$, hence $v=0$ as desired.
\end{proof}

\begin{proof}[Proof of Lemma \ref{lmm:lift_1}]
First, note that $\sigma_{\boldsymbol{m}} \in F^{1}G=F^{\boldsymbol{1}}G$ by Lemma \ref{lmm:Gal_1}. Hence the assertion of (1) immediately follows from Corollary \ref{cor:Gal_2}, except when $\boldsymbol{m}=(1,p)$ or $(p,1)$. If $\boldsymbol{m}=(p,1)$, we know $t_{(p,1)} \in F^{(2,1)}G$ by Lemma \ref{lmm:(1,2)(2,1)}. Then the claim that $\sigma_{(p,1)} \in F^{(p,1)}G$ follows from Corollary \ref{cor:Gal_2}, and the case where $\boldsymbol{m}=(1,p)$ is similar. The second assertion of (1) also follows from Corollary \ref{cor:Gal_2}.
 
Next we prove the assertion of (2). For $\boldsymbol{m} \in I_{0} \setminus \{(1,p), (p,1), (p-1,p-1)\}$, the assertion immediately follows since we have $\kappa_{\boldsymbol{m}}(t_{\boldsymbol{m}})=\kappa_{\boldsymbol{m}}(\sigma_{\boldsymbol{m}})$ and the element $t_{\boldsymbol{m}}$ generates $A^{\boldsymbol{m}}$ as a  $\Lambda$-module. Since the character $\kappa_{\boldsymbol{m}}$ is nontrivial by Lemma \ref{lmm:nontrivial}, the image $\kappa_{\boldsymbol{m}}(F^{1}G)$ is nonzero. 

Assume that $\boldsymbol{m}=(p-1,p-1)$. Since $A^{(p-1,p-1)}=A^{\Delta}$, the $\Delta$-invariant part of $A$, it follows that $A^{(p-1,p-1)}$ is isomorphic to $\Gal(L/K_{\infty})^{{\rm ab}}$, where $L$ is the maximal pro-$p$ extension of $K$ unramified outside $p$.  Since $K$ has class number one, it follows by \cite[(10.7.13) Theorem]{NSW} that $\Gal(L/K)$ is a free pro-$p$ group of rank two on the set $\{ \gamma_{1}, \gamma_{2} \}$. Hence $\Gal(L/K_{\infty})^{\rm{ab}}$, which is just the maximal abelian quotient of the commutator subgroup of a free pro-$p$ group of rank two, is a free $\Lambda$-module of rank one generated by $[\gamma_{1}, \gamma_{2}]$ by \cite[Theorem 2]{Ih1}. Hence the image of $\kappa_{(p-1,p-1)}(\sigma_{(p-1,p-1)})$ generates $\kappa_{(p-1,p-1)}(A)$, and it is nontrivial by Lemma \ref{lmm:nontrivial}. Finally, the case where $\boldsymbol{m}=(p,1)$ or $(1,p)$ follows from Lemma \ref{lmm:nontrivial} and Lemma \ref{lmm:(1,p)(p,1)}.
\end{proof}

We inductively define $\sigma_{\boldsymbol{m}}$ and $g_{\boldsymbol{m}}$ for general $\boldsymbol{m} \in I$ as follows: 

\medskip

{\bf Construction.}
\begin{itemize}
\item First, assume that $\boldsymbol{m}=(m_{1}, m_{2}) \in I$ is an index satisfying that $m_{1} \geq p$, $m_{2} \leq p-1$ and $\boldsymbol{m} \not \equiv \boldsymbol{1} \bmod p-1$. we define $\sigma_{\boldsymbol{m}}$ and $g_{\boldsymbol{m}}$ as 
\[
\sigma_{\boldsymbol{m}} \coloneqq \left( \gamma_{1}\sigma_{\boldsymbol{m}-(p-1,0)}\gamma_{1}^{-1}\sigma_{\boldsymbol{m}-(p-1,0)}^{-\chi_{1}^{m_{1}}(\gamma_{1})} \right)^{(1,m_{1})}
\] and 
\[
g_{\boldsymbol{m}} \coloneqq \gamma_{1}g_{\boldsymbol{m}-(p-1,0)}\gamma_{1}^{-1}g_{\boldsymbol{m}-(p-1,0)}^{-\chi_{1}^{m_{1}}(\gamma_{1})}.
\]
\item Secondly, if $\boldsymbol{m} \in I$ is an index such that $m_{2} \geq p$ and $\boldsymbol{m} \not \equiv \boldsymbol{1} \bmod p-1$, we define $\sigma_{\boldsymbol{m}}$ and $g_{\boldsymbol{m}}$ as
 \[
\sigma_{\boldsymbol{m}} \coloneqq \left(\gamma_{2}\sigma_{\boldsymbol{m}-(0,p-1)}\gamma_{2}^{-1}\sigma_{\boldsymbol{m}-(0,p-1)}^{-\chi_{2}^{m_{2}}(\gamma_{2})}\right)^{(2,m_{2})}
\] and 
\[
g_{\boldsymbol{m}} \coloneqq \gamma_{2}g_{\boldsymbol{m}-(0,p-1)}\gamma_{2}^{-1}g_{\boldsymbol{m}-(0,p-1)}^{-\chi_{2}^{m_{2}}(\gamma_{2})}.
\]
\end{itemize}

Finally, we consider the case where $\boldsymbol{m} \equiv \boldsymbol{1} \bmod p-1$. In this case, if $\boldsymbol{m} \geq (2,2)$, we obtain two candidates for $\sigma_{\boldsymbol{m}}$ obtained from applying the construction inductively from $\sigma_{(1,p)}$ and $\sigma_{(p,1)}$.

However, these candidates define the same element on the level of the abelianization $A$ of the Galois group of $\Omega$ over $K(p^{\infty})$. In fact, write $\boldsymbol{m}=(1+n_{1}(p-1), 1+n_{2}(p-1))$ for some $\boldsymbol{n}=(n_{1}, n_{2}) \in \mathbb{Z}_{\geq 1}^{2}$. If we start from $\sigma_{(p,1)}$ to obtain $\sigma_{\boldsymbol{m}}$ through the above construction, we have
\[ 
\sigma_{\boldsymbol{m}}=\prod_{i=1}^{n_{1}-1}(\gamma_{1}-\chi_{1}(\gamma_{1})^{1+i(p-1)})\prod_{j=0}^{n_{2}-1}(\gamma_{2}-\chi_{2}(\gamma_{2})^{1+j(p-1)})\sigma_{(p,1)}.
\] as an element of $A$. On the other hand, if we start from $\sigma_{(p,1)}$ we have
\[ 
\sigma_{\boldsymbol{m}}=\prod_{i=0}^{n_{1}-1}(\gamma_{1}-\chi_{1}(\gamma_{1})^{1+i(p-1)})\prod_{j=1}^{n_{2}-1}(\gamma_{2}-\chi_{2}(\gamma_{2})^{1+j(p-1)})\sigma_{(1,p)}.
\] Hence two candidates yield the same element on $A$ for every $\boldsymbol{n} \geq \boldsymbol{1}$ by Lemma \ref{Wintenberger}(2), and a similar argument can also be applied to the case of $g_{\boldsymbol{m}}$. Now we define $\sigma_{\boldsymbol{m}}$ and $g_{\boldsymbol{m}}$ for $\boldsymbol{m} \equiv \boldsymbol{1} \bmod p-1$ as follows.

\begin{itemize}
\item For an index $\boldsymbol{m}=(m,1)$ such that $m \geq 2$ and $m \equiv 1 \bmod p-1$, we define $\sigma_{(m,1)}$ and $g_{(m,1)}$ by applying the above construction, starting from $\sigma_{(p,1)}$. Similarly, for every index $\boldsymbol{m}=(1,m)$ such that $m \geq 2$ and $m \equiv 1 \bmod p-1$, we define $\sigma_{(1,m)}$ and $g_{(1,m)}$ by applying the above construction, starting from $\sigma_{(1,p)}$. 

\item Let $\boldsymbol{m} \in I$ be an index satisfying $\boldsymbol{m} \geq (2,2)$ and $\boldsymbol{m} \equiv \boldsymbol{1} \bmod p-1$. Then we define $\sigma_{\boldsymbol{m}}$ and $g_{\boldsymbol{m}}$ by applying the above construction, starting from $\sigma_{(p,1)}$. 
\end{itemize}

We have the following lemma.

\begin{lemma}\label{lmm:sigma=g}
For $\boldsymbol{m} \in I$, two elements $\sigma_{\boldsymbol{m}}$ and $g_{\boldsymbol{m}}$ define the same element of $A$.
\end{lemma}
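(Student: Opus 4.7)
The plan is to proceed by induction on $|\boldsymbol{m}|$. The base case is $\boldsymbol{m} \in I_0$, for which $g_{\boldsymbol{m}}=\sigma_{\boldsymbol{m}}$ holds literally in $G$ by the definition just given, so nothing is to check. For the inductive step, by the left/right symmetry it suffices to treat the case where $\boldsymbol{m}$ is constructed via reduction by $(p-1,0)$; setting $\boldsymbol{n}\coloneqq\boldsymbol{m}-(p-1,0)$, we need to compare
\[
\sigma_{\boldsymbol{m}}=\bigl(\gamma_{1}\sigma_{\boldsymbol{n}}\gamma_{1}^{-1}\sigma_{\boldsymbol{n}}^{-\chi_{1}^{m_{1}}(\gamma_{1})}\bigr)^{(1,m_{1})} \quad\text{and}\quad g_{\boldsymbol{m}}=\gamma_{1}g_{\boldsymbol{n}}\gamma_{1}^{-1}g_{\boldsymbol{n}}^{-\chi_{1}^{m_{1}}(\gamma_{1})}
\]
in $A$.

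The first step is to translate the operation $(\cdot)^{(1,m_{1})}$ into the abelian setting. Writing the $\mathrm{Gal}(K(p^{\infty})/K)$-action on $A$ multiplicatively, the averaging procedure $(\cdot)^{\epsilon_{1,m_{1}}}$ descends to multiplication by the idempotent $\epsilon_{1,m_{1}}\in\mathbb{Z}_{p}[\Delta]$ on $A$, hence the iterative limit defining $(\cdot)^{(1,m_{1})}$ stabilizes after one step in $A$ and simply equals this idempotent projection. Consequently the identity to prove in $A$ reads
\[
\epsilon_{1,m_{1}}\bigl((\gamma_{1}-\chi_{1}^{m_{1}}(\gamma_{1}))\sigma_{\boldsymbol{n}}\bigr)=(\gamma_{1}-\chi_{1}^{m_{1}}(\gamma_{1}))g_{\boldsymbol{n}}.
\]
By the inductive hypothesis $\sigma_{\boldsymbol{n}}=g_{\boldsymbol{n}}$ in $A$, so everything is reduced to showing that the element $(\gamma_{1}-\chi_{1}^{m_{1}}(\gamma_{1}))g_{\boldsymbol{n}}\in A$ already lies in the $\chi_{1}^{m_{1}}$-isotypic component under $\delta_{1}$, since $\epsilon_{1,m_{1}}$ then acts on it as the identity.

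The decisive observation is that $\chi_{1}(\delta_{1})=\delta$ has order $p-1$ in $\mathbb{F}_{p}^{\times}$, so $\chi_{1}^{m_{1}}(\delta_{1})=\chi_{1}^{m_{1}-(p-1)}(\delta_{1})=\chi_{1}^{n_{1}}(\delta_{1})$; hence the $\chi_{1}^{m_{1}}$- and $\chi_{1}^{n_{1}}$-isotypic components of $A$ under $\delta_{1}$ coincide. Now by its very construction, $\sigma_{\boldsymbol{n}}$ arises as the image of $(\cdot)^{(1,n_{1})}$, so (using the inductive hypothesis once more) $g_{\boldsymbol{n}}$ lies in the $\chi_{1}^{n_{1}}$-isotypic component of $A$. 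Since $\gamma_{1}$ and $\delta_{1}$ commute, the operator $\gamma_{1}-\chi_{1}^{m_{1}}(\gamma_{1})$ preserves $\delta_{1}$-isotypic components, and the required claim follows.

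The main subtle point I expect concerns well-foundedness of the induction for the ``diagonal'' indices $\boldsymbol{m}\geq(2,2)$ with $\boldsymbol{m}\equiv\boldsymbol{1}\bmod p-1$, where the construction seeds from $\sigma_{(p,1)}$ rather than from a genuine element of $I_{0}$; however, the paragraph preceding the lemma already verifies that seeding from $(p,1)$ or $(1,p)$ yields the same element in $A$ via Lemma~\ref{lmm:str_of_A}(2), so the induction on $|\boldsymbol{m}|$ is consistent at each step (each reduction drops $|\boldsymbol{m}|$ by $p-1$), and the argument above applies uniformly.
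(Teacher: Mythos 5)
Your argument is correct and is essentially the paper's own proof, just written out: the paper's three-line justification (on $A$ the averaging operators become idempotents, they commute with conjugation by $\gamma_{1},\gamma_{2}$, and the claim then follows from the construction) is exactly your induction, with the periodicity $\epsilon_{i,m}=\epsilon_{i,m-(p-1)}$ made explicit. The only refinement I would suggest is to carry the slightly stronger induction hypothesis that the image of $\sigma_{\boldsymbol{n}}=g_{\boldsymbol{n}}$ in $A$ lies in the simultaneous $(\chi_{1}^{n_{1}},\chi_{2}^{n_{2}})$-isotypic component for $(\delta_{1},\delta_{2})$ (rather than citing which projection was applied last, since in the $(0,p-1)$-step the outermost operation may be a $(1,n_{1})$-projection); this follows at once from the commutation relations you already invoke.
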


\begin{proof}
Since $A$ is abelian, the elements $\epsilon_{1,m}$ and $\epsilon_{2,m}$ act on $A$ as idempotents for every $m$. Moreover, the action of $\epsilon_{i,m}$ commutes with the conjugation by $\gamma_{1}$ and $\gamma_{2}$. Hence the assertion follows from the construction of $\sigma_{\boldsymbol{m}}$ and $g_{\boldsymbol{m}}$.
\end{proof}

We now show that $\{ \sigma_{\boldsymbol{m}} \}_{\boldsymbol{m} \in I}$ satisfy the assumption of our conjecture \ref{cnj:DI11}:

\begin{proposition}\label{prp:lift}
For $\boldsymbol{m}=(m_{1}, m_{2}) \in I$, the following assertions hold.
\begin{enumerate}
\item[(1)] The element $\sigma_{\boldsymbol{m}}$ is contained in $F^{\boldsymbol{m}}G$, and the image of $\sigma_{\boldsymbol{m}}$ in the $|\boldsymbol{m}|$-th graded quotient $\mathfrak{g}_{\lvert \boldsymbol{m} \rvert}$ is contained in the $\chi^{\boldsymbol{m}}$-isotypic component of $\mathfrak{g}_{\lvert \boldsymbol{m} \rvert}$.
\item[(2)] The element $\kappa_{\boldsymbol{m}}(\sigma_{\boldsymbol{m}})$ generates $\kappa_{\boldsymbol{m}}(F^{|\boldsymbol{m}|}G)$, which is nonzero.
\end{enumerate}
\end{proposition}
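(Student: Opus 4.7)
The plan is to prove Proposition \ref{prp:lift} by induction on $|\boldsymbol{m}|$. The base case $\boldsymbol{m} \in I_0$ is handled by Lemma \ref{lmm:lift_1}. For the inductive step with $\boldsymbol{m} \in I \setminus I_0$, I split into the three cases matching the recursive definition: $m_1 \geq p$ with $m_2 \leq p-1$ (using $\gamma_1$ and $\sigma_{\boldsymbol{m}-(p-1,0)}$), the symmetric case with $m_2 \geq p$ (using $\gamma_2$ and $\sigma_{\boldsymbol{m}-(0,p-1)}$), and the special case $\boldsymbol{m} \equiv \boldsymbol{1} \bmod p-1$ with $\boldsymbol{m} \geq (2,2)$, iterated from $\sigma_{(p,1)}$. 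I focus on the first case, since the second follows by symmetry and the third by further induction along the special chain, reconciled via Lemma \ref{lmm:sigma=g}.

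For part (1), set $\tau := \sigma_{\boldsymbol{m}-(p-1,0)}$, which by the inductive hypothesis lies in $F^{\boldsymbol{m}-(p-1,0)}G$ with image in the $\chi^{\boldsymbol{m}-(p-1,0)}$-isotypic component of $\mathfrak{g}_{|\boldsymbol{m}|-(p-1)}$. The key calculation examines $\mu := \gamma_1 \tau \gamma_1^{-1}\tau^{-\chi_1^{m_1}(\gamma_1)}$: by Corollary \ref{cor:Gal_2}, the conjugation action on the graded pieces of the two-variable filtration is controlled by the appropriate character, and the identity $\chi_1^{m_1-p+1}(\gamma_1) - \chi_1^{m_1}(\gamma_1) = \chi_1^{m_1-p+1}(\gamma_1)\bigl(1 - \chi_1^{p-1}(\gamma_1)\bigr)$, whose second factor has exact $p$-adic valuation $1$ because $\chi_1(\gamma_1)$ generates $1+p\mathbb{Z}_p$ topologically, combined with the graded structure from Section~\ref{3}, forces $\mu$ into a deeper layer of the filtration, effectively raising the first index by $p-1$. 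The idempotent $(1,m_1)$ from Lemma \ref{lmm:idemp} preserves the filtration degree and enforces the $\chi_1^{m_1}$-eigenvector identity under $\delta_1$; the subsequent $(2, m_2)$ projection then yields $\sigma_{\boldsymbol{m}} \in F^{\boldsymbol{m}}G$ with image in the $\chi^{\boldsymbol{m}}$-isotypic component of $\mathfrak{g}_{|\boldsymbol{m}|}$.

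For part (2), I apply $\kappa_{\boldsymbol{m}}$ to $\sigma_{\boldsymbol{m}}$ and exploit its Galois-equivariance together with Lemma \ref{lmm:restrict}(2), which ensures $\kappa_{\boldsymbol{m}}$ factors through $\mathfrak{g}_{|\boldsymbol{m}|}$. The recursive formula then relates $\kappa_{\boldsymbol{m}}(\sigma_{\boldsymbol{m}})$ to a nonzero unit multiple of $\kappa_{\boldsymbol{m}-(p-1,0)}(\tau)$ in $\mathbb{Z}_p(\boldsymbol{m})$, where the discrepancy is precisely the factor $1 - \chi_1^{p-1}(\gamma_1)$ absorbed by the Tate-twist comparison between $\mathbb{Z}_p(\boldsymbol{m}-(p-1,0))$ and $\mathbb{Z}_p(\boldsymbol{m})$; since the inductive hypothesis guarantees that $\kappa_{\boldsymbol{m}-(p-1,0)}(\tau)$ generates $\kappa_{\boldsymbol{m}-(p-1,0)}(F^{|\boldsymbol{m}|-(p-1)}G)$, I conclude $\kappa_{\boldsymbol{m}}(\sigma_{\boldsymbol{m}})$ generates $\kappa_{\boldsymbol{m}}(F^{|\boldsymbol{m}|}G)$.

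The main obstacle is the two-variable filtration bookkeeping across the $\gamma_i$-conjugations and the idempotent projections; in particular, verifying that the iterated idempotent construction does not destroy the filtration degree one has just secured. The special case $\boldsymbol{m} \equiv \boldsymbol{1} \bmod p-1$ is the most delicate point, since the two possible recursive paths must be reconciled via Lemma \ref{lmm:sigma=g} and the cyclicity provided by Lemma \ref{lmm:str_of_A}(2) is needed to ensure the Kummer image still spans.
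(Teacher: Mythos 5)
Your induction skeleton (base case $\boldsymbol{m}\in I_{0}$ via Lemma \ref{lmm:lift_1}, then climbing by $(p-1,0)$ or $(0,p-1)$, with Lemma \ref{lmm:sigma=g} reconciling the two chains when $\boldsymbol{m}\equiv\boldsymbol{1}\bmod p-1$) is the same as the paper's, but the mechanism you give for the inductive step of (1) does not work. Writing $\boldsymbol{m}'=\boldsymbol{m}-(p-1,0)$ and $\tau=\sigma_{\boldsymbol{m}'}$, Corollary \ref{cor:Gal_2} only yields that $\gamma_{1}\tau\gamma_{1}^{-1}\tau^{-\chi_{1}^{m'_{1}}(\gamma_{1})}$ dies in $\gr^{\boldsymbol{m}'}_{1}G$, i.e.\ lies in $F^{\boldsymbol{m}'+(1,0)}G$; you still need to gain $p-2$ further steps in the first index. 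Your justification --- that $1-\chi_{1}^{p-1}(\gamma_{1})$ has exact $p$-adic valuation $1$ --- cannot supply them: the two-variable filtration is a weight (commutator) filtration, and multiplying an element by $p$ does not move it into a deeper layer, so ``forces $\mu$ into a deeper layer, effectively raising the first index by $p-1$'' is unsupported. The paper's actual argument is an eigenvalue clash under the prime-to-$p$ element $\delta_{1}$: the idempotent limit of Lemma \ref{lmm:idemp} forces $\delta_{1}\sigma_{\boldsymbol{m}}\delta_{1}^{-1}=\chi_{1}^{m_{1}}(\delta_{1})\,\sigma_{\boldsymbol{m}}$, while Corollary \ref{cor:Gal_2} says $\delta_{1}$ acts on $\gr^{\boldsymbol{m}'+(j,0)}_{1}G$ by $\chi_{1}^{m'_{1}+j}(\delta_{1})$; since $\chi_{1}(\delta_{1})$ has exact order $p-1$, the image of $\sigma_{\boldsymbol{m}}$ must vanish in each intermediate graded piece $j=1,\dots,p-2$, which is what pushes it from $F^{\boldsymbol{m}'+(1,0)}G$ down to $F^{\boldsymbol{m}'+(p-1,0)}G=F^{\boldsymbol{m}}G$. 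You mention the $\delta_{1}$-eigenvector identity in passing but never use it for this climb, so part (1) has a genuine gap.

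Part (2) as you state it also fails. Applying $\kappa_{\boldsymbol{m}}$ to the recursion gives, on the level of $A$, $\kappa_{\boldsymbol{m}}(\sigma_{\boldsymbol{m}})=\chi_{1}^{m'_{1}}(\gamma_{1})\bigl(1-\chi_{1}^{p-1}(\gamma_{1})\bigr)\kappa_{\boldsymbol{m}}(\tau)$: the character that appears is $\kappa_{\boldsymbol{m}}$ evaluated on $\tau$, not $\kappa_{\boldsymbol{m}-(p-1,0)}(\tau)$, so the inductive hypothesis (which concerns $\kappa_{\boldsymbol{m}-(p-1,0)}$) says nothing about it; moreover the extra factor has valuation $1$, so there is no ``nonzero unit multiple,'' and no Tate-twist comparison absorbs it --- a priori $\kappa_{\boldsymbol{m}}(\sigma_{\boldsymbol{m}})$ could generate a strictly smaller submodule of $\kappa_{\boldsymbol{m}}(F^{\lvert\boldsymbol{m}\rvert}G)$. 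What is actually needed (and what the paper proves) is a module-theoretic statement: after replacing $\sigma_{\boldsymbol{m}}$ by $g_{\boldsymbol{m}}$ via Lemma \ref{lmm:sigma=g}, one shows by induction that the image $A^{\boldsymbol{m}_{0},m}$ of $F^{m}G$ in the isotypic piece $A^{\boldsymbol{m}_{0}}$ is generated over $\mathbb{Z}_{p}[[T_{1},T_{2}]]$ by the $g_{\boldsymbol{n}}$ with $\lvert\boldsymbol{n}\rvert=m$. The induction uses Lemma \ref{lmm:restrict}(2) (the weight-$m$ characters $\kappa_{\boldsymbol{n}}$ vanish on $F^{m+(p-1)}G$) together with the nontriviality of $\kappa_{\boldsymbol{n}}$ (Remark \ref{rmk:nontrivial}, i.e.\ the conjectural input) to conclude that the coefficient series satisfy $f_{\boldsymbol{n}}\in\bigl(T_{1}-\chi_{1}^{n_{1}}(\gamma_{1})+1,\;T_{2}-\chi_{2}^{n_{2}}(\gamma_{2})+1\bigr)$, and then the identities $(\gamma_{i}-\chi_{i}^{n_{i}}(\gamma_{i}))g_{\boldsymbol{n}}=g_{\boldsymbol{n}+(p-1)e_{i}}$ to rewrite everything in weight $m+(p-1)$; finally $\kappa_{\boldsymbol{m}}$ kills $g_{\boldsymbol{n}}$ for $\boldsymbol{n}\neq\boldsymbol{m}$ by isotypicity, so $\kappa_{\boldsymbol{m}}(g_{\boldsymbol{m}})$ spans $\kappa_{\boldsymbol{m}}(F^{\lvert\boldsymbol{m}\rvert}G)$. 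None of this structure is present in your sketch, so (2) is also a genuine gap rather than a compressed version of the paper's argument.
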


\begin{proof}
First, by Lemma \ref{lmm:lift_1}, the assertion of (1) holds for every $\boldsymbol{m} \in I_{0}$. For every $\boldsymbol{m} \in I$ satisfying the assertion of (1), we have $\gamma_{1}\sigma_{\boldsymbol{m}}\gamma_{1}\sigma_{\boldsymbol{m}}^{-\chi_{1}^{m_{1}}(\gamma_{1})} \in F^{\boldsymbol{m}+(1,0)}G$, by Corollary \ref{cor:Gal_2}. Now we claim that 
\[
\sigma_{\boldsymbol{m}+(p-1,0)}=\left(\gamma_{1}\sigma_{\boldsymbol{m}}\gamma_{1}\sigma_{\boldsymbol{m}}^{-\chi_{1}^{m_{1}}(\gamma_{1})}\right)^{(1,m_{1})} \in F^{\boldsymbol{m}+(p-1,0)}G.
\] The element $\sigma_{\boldsymbol{m}+(p-1,0)}$ is contained in $F^{\boldsymbol{m}+(1,0)}G$. By the construction of $\sigma_{\boldsymbol{m}+(p-1,0)}$ (cf. Lemma \ref{lmm:idemp}), we have the equality
\[
\delta_{1} \sigma_{\boldsymbol{m}+(p-1,0)} \delta_{1}^{-1} = \chi_{1}^{m_{1}}(\delta_{1})\sigma_{\boldsymbol{m}+(p-1,0)}
\] in the two-variable graded quotient $\Gr^{\boldsymbol{m}+(1,0)}_{1}G$. However, by Corollary \ref{cor:Gal_2}, we also have
\[
\delta_{1} \sigma_{\boldsymbol{m}+(p-1,0)} \delta_{1}^{-1} = \chi_{1}^{m_{1}+1}(\delta_{1})\sigma_{\boldsymbol{m}+(p-1,0)}
\] in the same graded quotient. Since the element $\chi_{1}(\delta_{1}) \in \mathbb{Z}_{p}^{\times}$ is of order $p-1$, we have $\sigma_{\boldsymbol{m}+(p-1,0)} \in F^{\boldsymbol{m}+(2,0)}G$. By applying the same argument to graded quotients \[
\Gr^{\boldsymbol{m}+(2,0)}_{1}G, \dots, \Gr^{\boldsymbol{m}+(p-2,0)}_{1}G,
\] we obtain $\sigma_{\boldsymbol{m}+(p-1,0)} \in F^{\boldsymbol{m}+(p-1,0)}G$, as desired. A similar argument immediately shows that $\sigma_{\boldsymbol{m}} \in F^{\boldsymbol{m}}G$ for every $\boldsymbol{m} \in I$. This proves the former assertion of (1), and the latter assertion follows from Corollary \ref{cor:Gal_2}.

To prove the assertion of (2), it suffices to show that the element $\kappa_{\boldsymbol{m}}(g_{\boldsymbol{m}})$ generates $\kappa_{\boldsymbol{m}}(F^{|\boldsymbol{m}|}G)$ for every $\boldsymbol{m} \in I$ by Lemma \ref{lmm:sigma=g}. We fix an arbitrary index $\boldsymbol{m}_{0} \in I_{0} \setminus \{(1,p)\}$, and prove the assertion of (2) for every $\boldsymbol{m}$ such that $\boldsymbol{m} \equiv \boldsymbol{m}_{0} \bmod p-1$. By Lemma \ref{Wintenberger}, the $\Lambda$-module $A^{\boldsymbol{m}_{0}}$ is generated by (the image of) $g_{\boldsymbol{m}_{0}}$ if $\boldsymbol{m}_{0} \neq (p,1)$, and generated by $g_{(p,1)}$ and $g_{(1,p)}$ if $\boldsymbol{m}_{0}=(p,1)$.

For every $m \geq \lvert \boldsymbol{m}_{0} \rvert$ such that $m \equiv \lvert \boldsymbol{m}_{0} \rvert \bmod p-1$, let $A^{\boldsymbol{m}_{0}, m}$ be the image of $F^{m}G$ in $A^{\boldsymbol{m}_{0}}$. Note the image of $F^{m+1}G$ in $A^{\boldsymbol{m}_{0},m}$ coincides with $A^{\boldsymbol{m}_{0}, m+(p-1)}$. In fact, assume that there is an integer $1 \leq i < p-1$ such that the image of $F^{m+i}G$ properly contains $A^{\boldsymbol{m}_{0}, m+(p-1)}$, and take $i$ to be maximal among such integers. Then we would have a nontrivial homomorphism
\[
\mathfrak{g}_{m+i} \to A^{\boldsymbol{m}_{0}, m}/ A^{\boldsymbol{m}_{0}, m+p-1}.
\] However, since $\mathfrak{g}_{m+i}$ is embedded in $\mathfrak{g}_{m+i} \otimes \mathbb{Q}_{p}$, which is a direct sum of $\mathbb{Q}_{p}(n_{1}, n_{2})$ where $n_{1}+n_{2}=m+i \not \equiv m \bmod p-1$, the subspace $\epsilon_{1,m_{1}}\epsilon_{2,m_{2}}\mathfrak{g}_{m+i}$ is trivial. Hence the image of the above homomorphism should be trivial, which is a contradiction.

By what we have just proved, we have a homomorphism 
\[
\mathfrak{g}_{|\boldsymbol{m}|} \to A^{\boldsymbol{m}_{0}, |\boldsymbol{m}|}/ A^{\boldsymbol{m}_{0}, |\boldsymbol{m}|+p-1}
\] sending $\sigma_{\boldsymbol{m}}$ to $g_{\boldsymbol{m}}$, and the character $\kappa_{\boldsymbol{m}} \mid_{F^{\lvert \boldsymbol{m} \rvert}G}$ factors through $A^{\boldsymbol{m}_{0},\lvert \boldsymbol{m} \rvert}/A^{\boldsymbol{m}_{0},\lvert \boldsymbol{m} \rvert+(p-1)}$, for every $\boldsymbol{m} \in I$ such that $\boldsymbol{m} \equiv \boldsymbol{m}_{0} \bmod p-1$. Now we prove the following claim:

\medskip

{\bf Claim.} The $\Lambda$-module $A^{\boldsymbol{m}_{0},m}$ is generated by $\{ g_{\boldsymbol{m}} \}_{\boldsymbol{m}}$, where $\boldsymbol{m}$ ranges over the indexes $\boldsymbol{m} \in I$ such that $|\boldsymbol{m}|=m$ and $\boldsymbol{m} \equiv\boldsymbol{m}_{0} \bmod p-1$. Moreover, the element $\kappa_{\boldsymbol{m}}(g_{\boldsymbol{m}})$ generates the image of $\kappa_{\boldsymbol{m}}(A^{\boldsymbol{m}_{0},m}) \neq 0$. 

\medskip

Before giving the proof of the claim, we remark that, for every $\boldsymbol{m} \in I$ with $\boldsymbol{m} \equiv \boldsymbol{m}_{0} \bmod p-1$, the image of $g_{\boldsymbol{m}}$ in $A^{\boldsymbol{m}_{0},\lvert \boldsymbol{m} \rvert}/A^{\boldsymbol{m}_{0},\lvert \boldsymbol{m} \rvert+(p-1)}$ is contained in the $\chi^{\boldsymbol{m}}$-isotypic component, since it is the image of $\sigma_{\boldsymbol{m}} \in \mathfrak{g}_{|\boldsymbol{m}|}$, which is contained in the $\chi^{\boldsymbol{m}}$-isotypic component by (1). In particular, the second half of the claim follows from the first.

In the following, we prove the first half of the claim by induction on $m$. We identify the ring $\Lambda=\mathbb{Z}_{p}[[\Gal(K(p^{\infty})/K(p))]]$ with the power series ring $\mathbb{Z}_{p}[[S_{1}, S_{2}]]$ via $S_{i}=\gamma_{i}-1$ for $i=1,2$.

First, we know that the claim holds for $m=|\boldsymbol{m}_{0}|$. We prove the claim for $m+(p-1)$, assuming it is true for $m$. Take an arbitrary element $x \in A^{\boldsymbol{m}_{0}, m+(p-1)}$. By induction hypothesis, the element $x$ can be written as
\[
x= \sum f_{\boldsymbol{m}}(S_{1}, S_{2})g_{\boldsymbol{m}}
\] for some $f_{\boldsymbol{m}}(S_{1}, S_{2}) \in \mathbb{Z}_{p}[[S_{1}, S_{2}]]$ where $\boldsymbol{m}$ ranges over the indexes $\boldsymbol{m} \in I$ satisfying $|\boldsymbol{m}|=m$ and $\boldsymbol{m} \equiv \boldsymbol{m}_{0} \bmod p-1$. 

Since $x \in A^{\boldsymbol{m}_{0}, m+(p-1)}$, we have $\kappa_{\boldsymbol{n}}(x)=0$ for every $\boldsymbol{n}=(n_{1}, n_{2})$ such that $|\boldsymbol{n}|=m$ and $\boldsymbol{n} \equiv \boldsymbol{m}_{0} \bmod p-1$, by Lemma \ref{lmm:restrict} (2). Hence we have
\begin{align*}
\kappa_{\boldsymbol{n}}(x)
&=  \sum \kappa_{\boldsymbol{n}}(f_{\boldsymbol{m}}(S_{1}, S_{2})g_{\boldsymbol{m}}) \\
&= \sum f_{\boldsymbol{m}}(\chi_{1}^{n_{1}}(\gamma_{1})-1,\chi_{2}^{n_{2}}(\gamma_{2})-1 )\kappa_{\boldsymbol{n}}(g_{\boldsymbol{m}}) \\
&=  f_{\boldsymbol{n}}(\chi_{1}^{n_{1}}(\gamma_{1})-1,\chi_{2}^{n_{2}}(\gamma_{2})-1 )\kappa_{\boldsymbol{n}}(g_{\boldsymbol{n}})=0.
\end{align*}

Note that $\kappa_{\boldsymbol{n}}(g_{\boldsymbol{n}})$ generates $\kappa_{\boldsymbol{n}}(F^{\lvert \boldsymbol{n} \rvert}G)$ by induction hypothesis. Moreover, the submodule $\kappa_{\boldsymbol{n}}(F^{\lvert \boldsymbol{n} \rvert}G) \subset \mathbb{Z}_{p}(\boldsymbol{n})$ is nonzero by Lemma \ref{lmm:nontrivial}. Therefore we have $\kappa_{\boldsymbol{n}}(g_{\boldsymbol{n}}) \neq 0$, implying that $f_{\boldsymbol{n}}(\chi_{1}^{n_{1}}(\gamma_{1})-1,\chi_{2}^{n_{2}}(\gamma_{2})-1)=0$. Equivalently, the power series $f_{\boldsymbol{n}}(S_{1}, S_{2})$ is contained in the ideal $(S_{1}-\chi_{1}^{n_{1}}(\gamma_{1})+1, S_{2}-\chi_{2}^{n_{2}}(\gamma_{2})+1)$. Since we have
\begin{align*}
(S_{1}-\chi_{1}^{n_{1}}(\gamma_{1})+1)g_{\boldsymbol{n}}
&= (\gamma_{1}-\chi_{1}^{n_{1}}(\gamma_{1}))g_{\boldsymbol{n}} = g_{\boldsymbol{n}+(p-1,0)} \quad \text{and} \\
(S_{2}-\chi_{2}^{n_{2}}(\gamma_{2})+1)g_{\boldsymbol{n}}
&= (\gamma_{2}-\chi_{2}^{n_{2}}(\gamma_{2}))g_{\boldsymbol{n}} = g_{\boldsymbol{n}+(0,p-1)},
\end{align*} the element $f_{\boldsymbol{n}}(S_{1}, S_{2})g_{\boldsymbol{n}}$ can be written as a $\mathbb{Z}_{p}[[S_{1}, S_{2}]]$-linear combination of $g_{\boldsymbol{n}+(p-1,0)}$ and $g_{\boldsymbol{n}+(0,p-1)}$. By repeating this argument, it follows that $x$ can be expressed as a $\mathbb{Z}_{p}[[S_{1}, S_{2}]]$-linear combination of $\{ g_{\boldsymbol{m}} \}_{\boldsymbol{m}}$, where $\boldsymbol{m} \in I$ ranges over  the indexes  satisfying $|\boldsymbol{m}|=m+(p-1)$ and $\boldsymbol{m} \equiv \boldsymbol{m}_{0} \bmod p-1$. This concludes the proof of the claim.
\end{proof}

\begin{remark}
In our construction of $\{ \sigma_{\boldsymbol{m}} \}_{\boldsymbol{m} \in I}$, we assume the two assumptions
\begin{enumerate}
    \item  The class number of $K(p)$ is not divisible by $p$, and
    \item  There are exactly two primes of $K(p^{2})$ above $p$
\end{enumerate}
to apply Lemma \ref{Wintenberger}. However, it is clear from our construction that one can remove these two assumptions, if there exists a quotient $\bar{A}$ of the $\Lambda$-module $A$ that satisfies the following properties:
\begin{enumerate}
    \item[(a)] There exists a surjective homomorphism $\Lambda^{[K(p):K]-1} \oplus \Ann(\mathbb{Z}_{p}(1)) \to \bar{A}$, and
    \item[(b)] The character $\kappa_{\boldsymbol{m}}$ factors through $\bar{A}$.
\end{enumerate} 
For example, let $\bar{A}$ be the Galois group of a Kummer extension of $K(p^{\infty})$ obtained by adjoining all $p$-powerth roots of elliptic units in $K(p^{\infty})$. Then it satisfies (b), but we do not know whether it also satisfies (a) or not.
\end{remark}

\subsection{Group-theoretic lemmas and end of proof of main result}\label{4.2}

In this subsection, we complete a proof of Theorem \ref{thm:main}. First, we prepare a series of group-theoretic lemmas. We first prove a generalization of Lemma 3.1 in \cite{Sh} to the case of a free pro-$p$ group of countably infinite rank:

\begin{lemma}\label{lmm:group}
Let $\mathcal{F}$ be a pro-$p$ group strongly generated by $y$ and $\{ x_{i} \}_{i \geq 1}$. For each $i \geq 1$, let $x_{i,1} \coloneqq x_{i}$ and we inductively define 
\[
x_{i,j+1} \coloneqq [y, x_{i,j}]x_{i,j}^{pa_{i,j}}
\] for some $a_{i,j} \in \mathbb{Z}_{p}$ for every $j \geq 1$. Denote the normal closure of $\{ x_{i} \}_{i \geq 1}$ in $\mathcal{F}$ by $H$. The following assertions hold:
\begin{enumerate}
\item[(1)] $H$ is strongly generated by $\{ x_{i,j} \}_{i, j \geq 1}$.
\item[(2)] If $\mathcal{F}$ is a free pro-$p$ group on $y$ and $\{ x_{i} \}_{i \geq 1}$, then $H$ is a free pro-$p$ group on $\{ x_{i,j} \}_{i, j \geq 1}$.
\end{enumerate}
\end{lemma}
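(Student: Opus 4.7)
My plan is a Frattini-quotient / Burnside-basis approach: use the recursion $x_{i,j+1}=[y,x_{i,j}]x_{i,j}^{pa_{i,j}}$ to derive an explicit formula for $x_{i,j}$ in the abelianization $H^{\mathrm{ab}}$, and then read off the required spanning (and, in the free case, independence) statements from it. I first check that $\{x_{i,j}\}_{i,j\geq 1}$ converges to $1$ in $\mathcal{F}$. Let $\{D_n\}_{n\geq 1}$ denote the Zassenhaus $p$-central filtration of $\mathcal{F}$, so that $[D_m,D_n]\subseteq D_{m+n}$, $D_m^p\subseteq D_{mp}$ and $\bigcap_n D_n=1$. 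An induction on $j$ using the recursion immediately gives $x_{i,j}\in D_j$ for all $i,j\geq 1$; hence any open normal subgroup $V\supseteq D_m$ contains every $x_{i,j}$ with $j\geq m$. For each of the finitely many smaller values of $j$, a further induction using normality of $V$ shows that $x_i\in V$ implies $x_{i,j}\in V$, and by strong generation only finitely many $x_i$ lie outside $V$.

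For part $(1)$, let $N$ denote the closed subgroup of $\mathcal{F}$ generated by $\{x_{i,j}\}$. By induction each $x_{i,j}$ lies in $H$, so $N\subseteq H$. Writing $\Phi(H)=[H,H]H^p$, the Burnside basis theorem for pro-$p$ groups (Nakayama's lemma) reduces $N=H$ to showing that the images of $\{x_{i,j}\}$ topologically span the $\mathbb{F}_p$-vector space $H/\Phi(H)$. The quotient $\mathcal{F}/H$ is topologically generated by the image $\bar y$ of $y$, so $\Lambda:=\mathbb{Z}_p[[\mathcal{F}/H]]$ carries the distinguished element $T:=\bar y-1$. Since $H$ is the normal closure of $\{x_i\}$, the $\Lambda$-module $H^{\mathrm{ab}}$ is generated by the images of $\{x_i\}$, and the recursion translates (in $H^{\mathrm{ab}}$, written additively) into
\[
x_{i,j+1}\equiv (T+pa_{i,j})\,x_{i,j}\pmod{[H,H]}.
\]
Iterating and reducing mod $p$ yields $x_{i,j}\equiv T^{j-1}x_i\pmod{pH^{\mathrm{ab}}}$ in $H/\Phi(H)$. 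Since the set $\{T^kx_i\}_{i\geq 1,\,k\geq 0}$ topologically spans $H/\Phi(H)$, so does $\{x_{i,j}\}$, and hence $N=H$.

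For part $(2)$, assume $\mathcal{F}$ is free pro-$p$ on $y$ and $\{x_i\}$. Then $H$ is a closed subgroup of a free pro-$p$ group, so $\mathrm{cd}_p(H)\leq 1$ and $H$ is itself free pro-$p$. A glance at $\mathcal{F}^{\mathrm{ab}}$ shows that $y$ has infinite order modulo $H$, so $\mathcal{F}/H\cong\mathbb{Z}_p$ and $\Lambda=\mathbb{Z}_p[[T]]$ is an integral domain. The Crowell (Fox-derivative) exact sequence of $\Lambda$-modules
\[
0\to H^{\mathrm{ab}}\to \bigoplus_{s\in\{y\}\cup\{x_i\}_{i\geq 1}}\Lambda\,e_s\to\Lambda\to\mathbb{Z}_p\to 0,\qquad e_y\mapsto T,\ e_{x_i}\mapsto 0,
\]
then (using that $\Lambda$ has no zero divisors) forces $H^{\mathrm{ab}}$ to be the topologically free $\Lambda$-module on $\{x_i\}$. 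Consequently $\{T^{j-1}x_i\}_{i,j\geq 1}$ is a topological $\mathbb{F}_p$-basis of $H/\Phi(H)$, and the congruence above says that $\{x_{i,j}\}$ differs from this basis by a change of coordinates in $j$ which is lower triangular and equal to the identity mod $p$; hence $\{x_{i,j}\}$ is itself a topological $\mathbb{F}_p$-basis of $H/\Phi(H)$. The Frattini-quotient characterization of bases of free pro-$p$ groups now yields $(2)$.

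The main technical input is the $\Lambda$-module description of $H^{\mathrm{ab}}$ together with the congruence $x_{i,j}\equiv T^{j-1}x_i\pmod p$; once these are in hand, both parts follow from standard Frattini-theoretic arguments.
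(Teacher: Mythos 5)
Your argument is correct in substance, but it takes a genuinely different route from the paper's. The paper proves (1) and (2) simultaneously by introducing an auxiliary free pro-$p$ group $\mathcal{K}$ on abstract symbols $\tilde{x}_{i,j}$, showing that the shift $\tilde{x}_{i,j}\mapsto \tilde{x}_{i,j+1}\tilde{x}_{i,j}^{1-pa_{i,j}}$ defines an automorphism whose $\hat{\mathbb{Z}}$-action on $\mathcal{K}$ factors through $\mathbb{Z}_{p}$ (this is the technical heart of that proof), and then comparing $\mathcal{K}\rtimes\mathbb{Z}_{p}$ with $\mathcal{F}$: surjectivity of $\mathcal{K}\to H$ gives (1), and in the free case the universal property of $\mathcal{F}$ produces an inverse, giving (2) with no module theory at all. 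You instead stay inside $\mathcal{F}$ and argue through $H^{\mathrm{ab}}$ as a module over $\Lambda=\mathbb{Z}_{p}[[\mathcal{F}/H]]$: for (1) you only need that $H^{\mathrm{ab}}$ is topologically generated over $\Lambda$ by the $x_{i}$ plus the congruence $x_{i,j}\equiv T^{j-1}x_{i}\bmod p$, and you conclude by the Frattini/Nakayama argument; for (2) you pin down $H^{\mathrm{ab}}$ exactly via the pro-$p$ Crowell--Fox sequence (with completed direct sums, since the basis is infinite) and then use a Frattini-quotient criterion for bases. Your route is conceptually close to how the elements $g_{\boldsymbol{m}}$ are actually manipulated later in the paper (as a $\mathbb{Z}_{p}[[T_{1},T_{2}]]$-module statement), at the cost of invoking more infinitely-generated machinery; the paper's route is more elementary and self-contained. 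Your opening convergence argument (Zassenhaus filtration, plus normality for the finitely many small $j$) is a point the paper's proof gets for free from the semidirect-product construction, and it is fine, granted the standard fact that every open normal subgroup of $\mathcal{F}$ contains some $D_{m}$ because the Zassenhaus filtration of the finite $p$-quotient terminates.

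The one step you should not merely cite is the final one in (2). Free pro-$p$ groups of infinite rank are not Hopfian (send one basis element to $1$ and shift the rest), so the finite-rank shortcut ``a generating set of the right cardinality is a basis'' is unavailable; the statement you need is: if $S$ converges to $1$ in the free pro-$p$ group $H$ and the induced map $F(S)\to H$ from the free pro-$p$ group on $S$ is an isomorphism on Frattini quotients, then it is an isomorphism. This is true and quick to prove: the map is surjective by the Frattini argument, and if $N$ is its kernel, inflation--restriction for $1\to N\to F(S)\to H\to 1$ with $\mathbb{F}_{p}$-coefficients gives $H^{1}(N,\mathbb{F}_{p})^{H}=0$, since $H^{1}(H,\mathbb{F}_{p})\to H^{1}(F(S),\mathbb{F}_{p})$ is an isomorphism and $H^{2}(H,\mathbb{F}_{p})=0$ as $H$ is free; but $H^{1}(N,\mathbb{F}_{p})$ is a discrete module over the pro-$p$ group $H$ and would have nonzero invariants if $N\neq 1$, so $N=1$. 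With that supplied (and the identification $x_{i}\mapsto e_{x_{i}}$ under the Crowell embedding, which you implicitly use to say $H^{\mathrm{ab}}$ is free on the images of the $x_{i}$), your proof is complete.
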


\begin{proof}
Let $\mathcal{K}$ be a free pro-$p$ group on the set $\{ \tilde{x}_{i,j} \}_{i, j \geq 1}$. We define a two-variable filtration on this group by
\[
\mathcal{K}_{i,j} \coloneqq \langle \tilde{x}_{i',j'} \mid i' \geq i \text{ or } j' \geq j \rangle_{\rm{normal}}
\] for every $i, j \geq 1$. The quotient $\mathcal{K}/\mathcal{K}_{i,j}$ is a free pro-$p$ group of finite rank, and the image of $\{ \tilde{x}_{i,j} \}_{i'<i,j'<j}$ forms a basis of this quotient.

We define an automorphism $\phi \colon \mathcal{K} \xrightarrow{\sim} \mathcal{K}$ by $\phi(\tilde{x}_{i,j}) \coloneqq \tilde{x}_{i,j+1}\tilde{x}_{i,j}^{1-pa_{i,j}}$ for every $i, j \geq 1$. Then it is straightforward to see that $\phi$ induces an automorphism $\mathcal{K}/\mathcal{K}_{i,j}$ for every $i,j$. Hence it defines an element of a profinite group $\Aut_{\rm{fil}}(\mathcal{K}) \coloneqq \varprojlim_{i,j} \Aut(\mathcal{K}/\mathcal{K}_{i,j})$, which is regarded as a subgroup of $\Aut(\mathcal{K})$. We extend a homomorphism $\mathbb{Z} \to \Aut_{\rm{fil}}(\mathcal{K})$ sending $1$ to the automorphism $\phi$ to a continuous homomorphism $\hat{\mathbb{Z}} \to \Aut_{\rm{fil}}(\mathcal{K})$. We claim that the resulting map factors through the maximal pro-$p$ quotient $\hat{\mathbb{Z}} \to \mathbb{Z}_{p}$.

Let us set $M \coloneqq \mathcal{K}^{\rm{ab}}/p$,  and let $M_{i,j} \subset M$ denote the image of $\mathcal{K}_{i,j}$ inside $M$. Since taking the maximal abelian quotients and taking reductions modulo $p$ are both right exact, the quotient $M/M_{i,j}$ is naturally isomorphic to $(\mathcal{K}/\mathcal{K}_{i,j})^{\rm{ab}}/p$. Note that the kernel of the homomorphism
\[
\Aut_{\rm{fil}}(\mathcal{K})=\varprojlim \Aut(\mathcal{K}/\mathcal{K}_{i,j})\to 
\Aut_{\rm{fil}}(M) \coloneqq \varprojlim \Aut(M/M_{i,j}) 
\] is pro-$p$, since the kernel of each $\Aut(\mathcal{K}/\mathcal{K}_{i,j}) \to \Aut(M/M_{i,j})$ is so. Therefore, to prove the claim, it suffices to show that the homomorphism $\hat{\mathbb{Z}} \to \Aut_{\rm{fil}}(M)$ corresponding to the image of $\phi$ factors through $\mathbb{Z}_{p}$.

By the construction of the automorphism $\phi$, it follows that \[
\phi^{p^{n}} \in \Ker \left(\Aut_{\rm{fil}}(M) \to \Aut(M/M_{p^{n}, p^{n}})\right)
\] for every $n \geq 1$. In fact, a direct computation shows that $\phi^{p}(\tilde{x}_{i,j})=\tilde{x}_{i,j+p}+\tilde{x}_{i,j}$ in $M$ for every $i,j$, and iterating $\phi^{p}$ verifies the claim. Hence the image of $\phi$ in $\Aut_{\rm{fil}}(M)=\Aut(M/M_{p^{n},p^{n}})$ is a pro-$p$ group. It follows that the map $\mathbb{Z} \to \Aut_{\rm{fil}}(\mathcal{K})$ corresponding to $\phi$ naturally extends to a continuous homomorphism $\mathbb{Z}_{p} \to \Aut_{\rm{fil}}(\mathcal{K})$, and we can take the associated semi-direct product $\mathcal{K} \rtimes \mathbb{Z}_{p}$. 

There is a unique homomorphism $\mathcal{K} \to H$ sending $\tilde{x}_{i,j}$ to $x_{i,j}$ for every $i, j \geq 1$ by the freeness of $H$. Since the action of $\phi$ on $\mathcal{K}$ is compatible with the conjugation by $y$ on $H$, we can extend this homomorphism to $\mathcal{K} \rtimes \mathbb{Z}_{p} \to \mathcal{F}$ by mapping $\phi$ to $y$, and the resulting homomorphism is surjective by construction. Hence $\mathcal{K} \to H$ is also surjective. This proves the assertion of (1). Moreover, if $\mathcal{F}$ is a free pro-$p$ group on $y$ and $\{ x_{i} \}_{i}$, the universal property of $\mathcal{F}$ leads to the existence of the inverse of $\mathcal{K} \rtimes \mathbb{Z}_{p} \to \mathcal{F}$ constructed above. Hence $H$ is isomorphic to $\mathcal{K}$, proving the second assertion.
\end{proof}

\begin{corollary}\label{cor:group}
Let $r$ be a positive integer and $\mathcal{F}$ a pro-$p$ group generated by $y_{1}, y_{2}$ and $\{ x_{i} \}_{1 \leq i \leq r}$. For each $1 \leq i \leq r$, let $x_{i, (0,0)} \coloneqq x_{i}$ and we inductively define 
\[
x_{i,(j+1,0)} \coloneqq [y_{1}, x_{i,(j,0)}]x_{i,(j,0)}^{pa_{i,j}}
\] for some $a_{i,j} \in \mathbb{Z}_{p}$ and $j \geq 0$. Similarly, for each $1 \leq i \leq r$ and each $j \geq 0$, define
\[
x_{i,(j,k+1)} \coloneqq [y_{2}, x_{i,(j,k)}]x_{i,(j,k)}^{pb_{i,j,k}}
\] for some $b_{i,j,k} \in \mathbb{Z}_{p}$ and $k \geq 0$. Moreover, let $z_{(0,0)} \coloneqq [y_{1}, y_{2}]$ and define
\[
z_{(i+1,0)} \coloneqq [y_{1}, z_{(i,0)}]z_{(i,0)}^{p\alpha_{i}}
\] for some $\alpha_{i} \in \mathbb{Z}_{p}$ and $i \geq 0$. Finally, for each $i \geq 0$, we define
\[
z_{(i,j+1)} \coloneqq [y_{2}, z_{(i,j)}]z_{(i,j)}^{p\beta_{i,j}}
\] for some $\beta_{i,j} \in \mathbb{Z}_{p}$ and each $j \geq 0$. Let $H$ denote the normal closure of $\{ x_{i} \}_{1 \leq i \leq r}$ and $z_{(0,0)}$ inside $\mathcal{F}$. Then $H$ is strongly generated by $\{ x_{i,(j,k)} \}_{\substack{1 \leq i \leq r, \\ j, k \geq 0}}$ and $\{ z_{i,j} \}_{i,j \geq 0}$.
\end{corollary}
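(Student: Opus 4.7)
The plan is to apply Lemma \ref{lmm:group}(1) twice in succession---first with $y = y_1$ and then with $y = y_2$---choosing the free parameters in each application so that the iterates produced by the lemma are exactly the prescribed elements $x_{i,(j,k)}$ and $z_{(i,j)}$, and then to identify the resulting iterated inner normal closure with the desired $\mathcal{F}$-normal closure $H$.

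First apply Lemma \ref{lmm:group}(1) to $\mathcal{F}$ viewed as strongly generated by $y_1$ together with the finite set $\{y_2, x_1, \dots, x_r\}$: the normal closure $M_1$ of $\{y_2, x_1, \dots, x_r\}$ in $\mathcal{F}$ is strongly generated by all $y_1$-iterates of these elements. Choosing the first free parameter attached to $y_2$ to be $0$ and the subsequent ones to be $\alpha_0, \alpha_1, \dots$ produces $y_2, z_{(0,0)}, z_{(1,0)}, \dots$, while the free parameters for each $x_i$ taken as $a_{i,0}, a_{i,1}, \dots$ produce $x_{i,(j,0)}$ for $j \geq 0$. Hence $M_1$ is strongly generated by $\{y_2\} \cup \{z_{(i,0)}\}_{i \geq 0} \cup \{x_{i,(j,0)}\}_{i,j \geq 0}$.

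Next, view $M_1$ as strongly generated by $y_2$ together with $S := \{z_{(i,0)}\}_i \cup \{x_{i,(j,0)}\}_{i,j}$, and let $M_2$ denote the normal closure of $S$ in $M_1$. Applying Lemma \ref{lmm:group}(1) a second time with $y = y_2$ and free parameters $\beta_{i,j}$ and $b_{i,j,k}$ prescribed in the statement, $M_2$ is strongly generated by $\{z_{(i,j)}\}_{i,j \geq 0} \cup \{x_{i,(j,k)}\}_{i,j,k \geq 0}$, which is the desired generating set.

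It remains to show that $M_2$ coincides with $H$, the normal closure of $\{x_i\}_i \cup \{z_{(0,0)}\}$ in $\mathcal{F}$. Every generator of $M_2$ arises from $\{x_i, z_{(0,0)}\}$ by iterated commutators with $y_1, y_2$ and $p$-th powers, so $M_2 \subseteq H$. For the reverse inclusion, because $\mathcal{F}$ is generated by $M_1$ and $y_1$ and $M_2$ is normal in $M_1$, it suffices to verify $y_1 M_2 y_1^{-1} \subseteq M_2$. One proves $y_1 g y_1^{-1} \in M_2$ for every generator $g$ of $M_2$ by double induction on the indices, using the recursive definitions of the $x_{i,(j,k)}$ and $z_{(i,j)}$ together with the elementary identity $y_1 y_2 y_1^{-1} = z_{(0,0)} y_2 \in M_1$: this identity rewrites $y_1 [y_2, h] y_1^{-1}$ as $[z_{(0,0)} y_2,\, y_1 h y_1^{-1}]$, which is a commutator of an element of $M_1$ with an element of $M_2$ (by the inductive hypothesis), and hence lies in $M_2$ by normality of $M_2$ in $M_1$; the base cases $y_1 x_{i,(j,0)} y_1^{-1}$ and $y_1 z_{(i,0)} y_1^{-1}$ simplify directly via the defining relations to $x_{i,(j+1,0)}\, x_{i,(j,0)}^{1-p a_{i,j}}$ and $z_{(i+1,0)}\, z_{(i,0)}^{1-p\alpha_i}$, both in $M_2$. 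Once this $y_1$-stability is granted, $M_2$ is normal in $\mathcal{F}$ and contains the normal generators of $H$, so $H \subseteq M_2$. The $y_1$-stability verification is the main technical point; everything else is parameter bookkeeping between the iterates of Lemma \ref{lmm:group} and the elements prescribed in the statement.
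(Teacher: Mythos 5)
Your route is genuinely different from the paper's. The paper first reduces to the case where $\mathcal{F}$ is free pro-$p$ on $y_{1}, y_{2}, \{x_{i}\}$ (the general case follows by pushing everything through a quotient map), and then applies Lemma \ref{lmm:group} twice to explicit kernels: $\mathcal{F}_{1} \coloneqq \ker(\mathcal{F} \to \mathbb{Z}_{p})$, $y_{1} \mapsto 1$, is free on $y_{2}, \{z_{(i,0)}\}, \{x_{i,(j,0)}\}$, and $H$ is recognized as the kernel of $\mathcal{F}_{1} \to \mathbb{Z}_{p}$, $y_{2} \mapsto 1$, so the second application of the lemma finishes immediately (and even yields freeness of $H$ in the free case). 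You instead stay in the general group, apply part (1) of the lemma twice to iterated normal closures $M_{1} \supseteq M_{2}$, and then must prove by hand that $M_{2}$, a priori only normal in $M_{1}$, coincides with the $\mathcal{F}$-normal closure $H$. What the paper's reduction buys is precisely that this identification is automatic (normal closures in a free pro-$p$ group are kernels of maps to $\mathbb{Z}_{p}$, and quotients preserve both normal closures and the prescribed iterates); what your approach buys is that it avoids the reduction step and makes the mechanism ($y_{1}$-conjugation stability via the recursions) explicit. Your parameter bookkeeping, the inclusion $M_{2} \subseteq H$, and the inductive computation of $y_{1}$-conjugates of the generators are all correct.

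One step needs more care: you assert that, since $\mathcal{F}$ is (topologically) generated by $M_{1}$ and $y_{1}$ and $M_{2} \trianglelefteq M_{1}$, it suffices to check $y_{1}M_{2}y_{1}^{-1} \subseteq M_{2}$. For a closed subgroup of a profinite group, one-sided invariance under conjugation by a single element does not in general give membership of that element in the normalizer (think of a one-sided shift on $\prod_{n \in \mathbb{Z}} \mathbb{Z}/p$ inside a semidirect product), so as stated this does not yet yield normality of $M_{2}$ in $\mathcal{F}$; you would also need $y_{1}^{-1}M_{2}y_{1} \subseteq M_{2}$, and the recursions only run ``upward'', so that inclusion is not accessible by the same induction. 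The gap is repairable in your setting: since $M_{1}$ is strongly generated by $y_{2}$ together with $S$, the quotient $M_{1}/M_{2}$ is procyclic, generated by the image of $y_{2}$; the endomorphism of $M_{1}/M_{2}$ induced by conjugation by $y_{1}$ (well defined because $y_{1}M_{2}y_{1}^{-1} \subseteq M_{2}$) sends $\bar{y}_{2}$ to $\overline{z_{(0,0)}y_{2}} = \bar{y}_{2}$, hence is the identity, and therefore $y_{1}^{-1}M_{2}y_{1} \subseteq M_{2}$ as well, giving genuine normality and hence $H \subseteq M_{2}$. Alternatively, you can sidestep the issue entirely by first reducing to the free case as the paper does, where $H$ and $M_{2}$ are both identified with the kernel of the map $\mathcal{F} \to \mathbb{Z}_{p}^{2}$ killing the $x_{i}$.
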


\begin{proof}
We may assume that $\mathcal{F}$ is a free pro-$p$ group on $y_{1}, y_{2}$ and $\{ x_{i} \}_{1 \leq i \leq r}$. First, let $\mathcal{F}_{1}$ be the kernel of the following homomorphism:
\[
 \mathcal{F} \to \mathbb{Z}_{p};  \; y_{1} \mapsto 1, y_{2} \mapsto 0, x_{i} \mapsto 0 \quad (1 \leq i \leq r)
\] Then, by Lemma \ref{lmm:group}, the group $\mathcal{F}_{1}$ is a free pro-$p$ group on $y_{2}$, $\{ z_{(i,0)} \}_{i \geq 0}$ and $\{ x_{i,(j,0)} \}_{\substack{1\leq i \leq r,\\ j \geq 0}}$. Moreover, the group $H$ coincides the kernel of the homomorphism 
\[
\mathcal{F}_{1} \to \mathbb{Z}_{p}; \; y_{2} \mapsto 1, z_{(i,0)} \mapsto 0 \; (i \geq 0) \text{ and } x_{i,(j,0)} \mapsto 0 \; (1 \leq i \leq r, j \geq 0).
\] By applying Lemma \ref{lmm:group} again, it follows that $H$ is a free pro-$p$ group on $\{ z_{i,j} \}_{i \geq 0, j \geq 0}$ and $\{ x_{i,(j,k)}\}_{\substack{1 \leq i \leq r \\ j, k \geq 0}}$. This concludes the proof.
\end{proof}

The following lemma is used to compare the filtration $\{ F^{m}G \}_{m \geq 1}$ on $G$ with a certain canonical filtration on $G$ associated to $\{ \sigma_{\boldsymbol{m}} \}_{\boldsymbol{m} \in I}$. 

\begin{lemma}\label{lmm:filt}
Let $\mathcal{G}$ be a free pro-$p$ group on the set $\{ \tilde{\sigma}_{\boldsymbol{m}} \}_{\boldsymbol{m} \in I}$.
\begin{enumerate}
\item[(1)] There exists a unique descending central filtration $\{ \tilde{F}^{m}\mathcal{G} \}_{m \geq 1}$ on $\mathcal{G}$ satisfying the following property: $(\mathrm{i})$ $\tilde{\sigma}_{\boldsymbol{m}} \in \tilde{F}^{\lvert \boldsymbol{m} \rvert}\mathcal{G}$ for every $\boldsymbol{m} \in I$. $(\mathrm{ii})$ If $\{ F^{m}\mathcal{G} \}_{m \geq 1}$ is a descending central filtration satisfying $(\mathrm{i})$, then $\tilde{F}^{m}\mathcal{G} \subset F^{m}\mathcal{G}$ for every $m \geq 1$.
\item[(2)] The graded Lie algebra $\bigoplus_{m \geq 1}\tilde{F}^{m}\mathcal{G}/\tilde{F}^{m+1}\mathcal{G}$ is freely generated by the image of $\{ \tilde{\sigma}_{\boldsymbol{m}} \}_{\boldsymbol{m} \in I}$.
\item[(3)] The intersection $\cap_{m \geq 1} \tilde{F}^{m}\mathcal{G}$ is trivial.
\end{enumerate}
\end{lemma}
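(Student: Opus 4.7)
The plan is to construct $\tilde{F}^m\mathcal{G}$ as the minimum descending central filtration satisfying (i), and then identify its associated graded via a weighted Magnus-type embedding.

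For part (1), I would define $\tilde{F}^m\mathcal{G}$ inductively: set $\tilde{F}^1\mathcal{G} := \mathcal{G}$, and for $m \geq 2$ let $\tilde{F}^m\mathcal{G}$ be the closed subgroup of $\mathcal{G}$ generated by $[\tilde{F}^a\mathcal{G}, \tilde{F}^b\mathcal{G}]$ for all $a+b=m$ together with $\{\tilde{\sigma}_{\boldsymbol{k}} : |\boldsymbol{k}| \geq m\}$. Condition (i) is built in, and the descending central property $[\tilde{F}^a, \tilde{F}^b] \subset \tilde{F}^{a+b}$ follows immediately. Minimality (ii) is proved by induction on $m$: for any descending central filtration $\{F^m\}$ satisfying (i), the generators of $\tilde{F}^m$ all lie in $F^m$, so $\tilde{F}^m \subset F^m$. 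Uniqueness is tautological from the universal property.

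For parts (2) and (3), I would employ a weighted Magnus embedding. Let $\hat{T}$ be the completion of the free associative $\mathbb{Z}_p$-algebra on symbols $\{X_{\boldsymbol{m}}\}_{\boldsymbol{m} \in I}$---with $X_{\boldsymbol{m}}$ assigned weight $|\boldsymbol{m}|$---with respect to the weight filtration $F^k\hat{T}$ of elements of total weight $\geq k$. Since only finitely many $\boldsymbol{m}$ satisfy $|\boldsymbol{m}| \leq k$ for each $k$, every graded piece $F^k\hat{T}/F^{k+1}\hat{T}$ is a finitely generated free $\mathbb{Z}_p$-module and $\bigcap_k F^k\hat{T} = 0$. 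The multiplicative group $1 + F^1\hat{T}$ is a pro-$p$ group, the elements $1 + X_{\boldsymbol{m}}$ converge to $1$, and by the universal property of $\mathcal{G}$ there is a unique continuous homomorphism $\mu \colon \mathcal{G} \to 1 + F^1\hat{T}$ sending $\tilde{\sigma}_{\boldsymbol{m}} \mapsto 1 + X_{\boldsymbol{m}}$. Setting $F'^k\mathcal{G} := \mu^{-1}(1 + F^k\hat{T})$ yields a descending central filtration on $\mathcal{G}$ satisfying (i), so $\tilde{F}^k\mathcal{G} \subset F'^k\mathcal{G}$ by part (1).

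For (2), let $\mathcal{L}$ be the free graded $\mathbb{Z}_p$-Lie algebra on $\{\tilde{\sigma}_{\boldsymbol{m}}\}$ (with $\tilde{\sigma}_{\boldsymbol{m}}$ placed in weight $|\boldsymbol{m}|$); there is a canonical surjective graded Lie algebra map $\mathcal{L} \twoheadrightarrow \mathrm{gr}_{\tilde{F}}(\mathcal{G})$ from the description of $\tilde{F}^m$ by iterated commutators. Composing with $\mathrm{gr}_{\tilde{F}}(\mathcal{G}) \to \mathrm{gr}_{F'}(\mathcal{G}) \hookrightarrow \mathrm{gr}_F(\hat{T})$ (the last arrow injective by definition of $F'$), the composite $\mathcal{L} \to \mathrm{gr}_F(\hat{T})$ coincides with the standard inclusion of the free Lie algebra into the tensor algebra, which is injective by Poincar\'e--Birkhoff--Witt. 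Hence $\mathcal{L} \xrightarrow{\sim} \mathrm{gr}_{\tilde{F}}(\mathcal{G})$. For (3), $\bigcap_m \tilde{F}^m\mathcal{G} \subset \bigcap_m F'^m\mathcal{G} = \mu^{-1}(\bigcap_k(1 + F^k\hat{T})) = \ker(\mu)$; since a nontrivial $g \in \mathcal{G}$ lies in $\mathcal{G}(N) \setminus \mathcal{G}(N+1)$ for some $N$ by residual nilpotency of $\mathcal{G}$ and its Lazard image is a nonzero Lie monomial whose PBW image in $T$ is nonzero, $\mu(g) - 1$ has a nontrivial weighted component, so $\mu$ is injective and $\bigcap \tilde{F}^m\mathcal{G} = \{1\}$. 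The main obstacle is the careful bookkeeping for the weighted Magnus machinery---in particular, verifying continuity of $\mu$, the compatibilities among $\tilde{F}$, $F'$, and the classical descending central series, and the comparison between classical and weighted gradings that underlies the injectivity of $\mu$ in the infinite-rank pro-$p$ setting.
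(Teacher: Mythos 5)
Your proposal is correct in substance, but only part (1) follows the paper's route; for (2) and (3) you argue genuinely differently. The paper's (1) is the same inductive minimal-filtration construction (it uses normal closures and the commutators $[\tilde{F}^{m'}\mathcal{G},\tilde{F}^{m''}\mathcal{G}]$ with $m',m''<m$, $m'+m''\ge m$, which agrees with your version by the minimality you both prove). For (2) the paper gives no argument of its own, simply citing Ihara, so your weighted Magnus embedding $\mu\colon\mathcal{G}\to 1+F^{1}\hat{T}$ actually supplies a self-contained (and integral) proof of the freeness of the associated graded. For (3) the paper avoids any auxiliary algebra: it writes $\mathcal{G}\cong\varprojlim_{n}F_{n}$, where $F_{n}$ is the finite-rank free quotient obtained by killing the generators of weight $>n$, and shows by a direct estimate that the image of $\tilde{F}^{m}\mathcal{G}$ in $F_{n}$ lies in $F_{n}(\lfloor m/n\rfloor+1)$, so the intersection dies in every $F_{n}$. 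Your route instead deduces (3) from $\bigcap_{m}F'^{m}\mathcal{G}=\ker\mu$ plus injectivity of $\mu$. That injectivity is true, but your one-line justification (the ``Lazard image is a nonzero Lie monomial'') is the one thin spot: in the infinite-rank setting the graded pieces of the descending central series of $\mathcal{G}$ are completed objects (for instance $\mathcal{G}/\mathcal{G}(2)\cong\prod_{\boldsymbol{m}\in I}\mathbb{Z}_{p}$), so a nontrivial class need not be a monomial and the comparison of letter-degree with weight needs care; the clean repair is precisely the paper's device---project to $F_{n}$ and to the algebra $\hat{T}_{n}$ on the finitely many variables of weight $\le n$, where the classical Magnus embedding for finitely generated free pro-$p$ groups gives injectivity, and pass to the limit. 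In exchange, your machinery proves (2) and (3) simultaneously and over $\mathbb{Z}_{p}$, whereas the paper's argument for (3) is more elementary and its (2) rests on a citation.
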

\begin{proof}
We construct $\{ \tilde{F}^{m}\mathcal{G} \}_{m \geq 1}$ as follows: First, let $\tilde{F}^{1}\mathcal{G} \coloneqq \mathcal{G}$. For $m \geq 2$, we inductively define $\tilde{F}^{m}\mathcal{G}$ as
\[
\tilde{F}^{m}\mathcal{G} \coloneqq \langle \{\tilde{\sigma}_{\boldsymbol{m}} \}_{\lvert \boldsymbol{m} \rvert \geq m}, \{ [\tilde{F}^{m'}\mathcal{G}, \tilde{F}^{m''}\mathcal{G}] \}_{\substack{m'<m, m''<m \\ m \leq m'+m''}} \rangle_{{\rm normal}}.
\] 
Since $[\tilde{F}^{m}\mathcal{G}, \tilde{F}^{1}\mathcal{G}] \subset \tilde{F}^{m}\mathcal{G}$, it follows that $\tilde{F}^{m+1}\mathcal{G} \subset \tilde{F}^{m}\mathcal{G}$ for every $m \geq 1$. Apparently, the filtration $\{ \tilde{F}^{m}\mathcal{G} \}_{m \geq 1}$ defines a descending central filtration on $\mathcal{G}$ that satisfies the condition ($\mathrm{i}$) in the assertion of (1). Now let $\{ F^{m}\mathcal{G} \}_{m \geq 1}$ be an arbitrary descending central filtration on $\mathcal{G}$ that satisfies ($\mathrm{i}$). We have $F^{1}\mathcal{G}=\tilde{F}^{1}\mathcal{G}=\mathcal{G}$. By induction on $m$, it immediately follows that $\tilde{F}^{m}\mathcal{G} \subset F^{m}\mathcal{G}$ holds for every $m \geq 1$. Hence the filtration $\{ \tilde{F}^{m}\mathcal{G} \}_{m \geq 1}$ also satisfies ($\mathrm{ii}$). The uniqueness is clear. The proof of the assertion of (2) is similar to that of \cite[p.263, 5]{Ih6}.

We prove the last assertion. Let $F_{n}$ be a free pro-$p$ group on the set $\{ \tilde{\sigma}_{\boldsymbol{m}} \}_{\boldsymbol{m} \in I, \lvert \boldsymbol{m} \rvert \leq n}$ for every $n \geq 2$. Since $\mathcal{G}$ is isomorphic to the projective limit $\varprojlim_{n} F_{n}$, it suffices to show that the image of the intersection $\cap_{m \geq 1} \tilde{F}^{m}G$ in $F_{n}$ is trivial for every $n \geq 2$.

For an integer $m \geq n$, the image of $\tilde{F}^{m}\mathcal{G}$ in $F_{n}$ is normally generated by the image of
\[
\{ [\tilde{F}^{m'}\mathcal{G}, \tilde{F}^{m''}\mathcal{G}] \}_{\substack{m'<m, m''<m \\ m \leq m'+m''}} .
\] We claim that the image of $\tilde{F}^{m}\mathcal{G}$ in $F_{n}$ contained in the $r_{m}$-th component $F_{n}(r_{m})$ of the descending central series of $F_{n}$, where  $r_{m}$ is defined by
\[
r_{m} \coloneqq  \Bigl\lfloor \frac{m}{n} \Bigr\rfloor+1
\] for every $m \geq 1$. Once the claim is obtained, the image of $\cap_{m \geq 1} \tilde{F}^{m}\mathcal{G}$ in $F_{n}$ is contained in the intersection $\cap_{m \geq n} F_{n}(r_{m})=\{ 1 \}$, and the assertion follows.

The claim trivially holds for every $m \leq 2n-1$. Assume that the claim also holds for every $m \leq kn-1$ for some $k \geq 2$. We prove the claim for $m=kn, kn+1, \dots (k+1)n-1$ in order. Write $m=kn+r$ for some $0 \leq r \leq n-1$.

Let $m', m''$ be positive integers which are less than $m$ and $m'+m'' \geq m$, and write $m'=k'n+r'$, $m'=k''n+r''$ for some $0 \leq k', k'' \leq k$ and $0 \leq r', r'' \leq n-1$. Since
\[
m'+m''=(k'+k'')n+(r'+r'') \geq m=kn+r,
\] It holds that
\[
r_{m'}+r_{m''}=(k'+k'')+2 \geq (k+1)+\frac{n+r-(r'+r'')}{n}.
\] Since $\frac{n+r-(r'+r'')}{n} \geq \frac{2-n}{n} > -1$, it holds that
\[
r_{m'}+r_{m''} \geq r_{m}.
\] Therefore, the image of $[\tilde{F}^{m'}\mathcal{G}, \tilde{F}^{m''}\mathcal{G}]$ is contained in 
\[ [F_{n}(r_{m'}), F_{n}(r_{m''})] \subset F_{n}(r_{m'}+r_{m''}) \subset F_{n}(r_{m}).\] Hence the image of $\tilde{F}^{m}\mathcal{G}$ is contained in $F_{n}(r_{m})$, as desired.
\end{proof}

The next lemma gives an explicit set of generators of  $G=\Gal(\Omega/K(p))$:

\begin{lemma}\label{lmm:generator}
The group $G$ is generated by $\gamma_{1}$, $\gamma_{2}$ and $\{ \sigma_{\boldsymbol{m}} \}_{\boldsymbol{m} \in I_{0} \setminus \{ (p-1,p-1) \}}$.
\end{lemma}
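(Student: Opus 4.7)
The plan is to show that the closed subgroup $G' \subset G$ topologically generated by the listed elements equals $G$, by exploiting the short exact sequence of pro-$p$ groups
\[
1 \to P \to G \to \Gamma \to 1, \qquad P \coloneqq \mathrm{Gal}(\Omega/K(p^\infty)),\quad \Gamma \coloneqq \mathrm{Gal}(K(p^\infty)/K(p)) \cong \mathbb{Z}_p^2.
\]
First, since $\sigma_{(p-1,p-1)} = [\gamma_1, \gamma_2]$ by construction, $\sigma_{(p-1,p-1)}$ already lies in $G'$, and hence $\{\sigma_{\boldsymbol{m}}\}_{\boldsymbol{m} \in I_0} \subset G'$. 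Because $\gamma_1, \gamma_2$ project to topological generators of $\Gamma$, the image of $G'$ in $\Gamma$ is all of $\Gamma$, so $G = G' \cdot P$; it then suffices to prove that $H \coloneqq G' \cap P$ coincides with $P$.

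Next I would observe that $H$ is a closed subgroup of the pro-$p$ group $P$, and that conjugation by $\gamma_1, \gamma_2 \in G'$ preserves $H$ (since $P$ is normal in $G$). Consequently the image $\bar{H} \subset A \coloneqq P^{\mathrm{ab}}$ is a closed $\mathbb{Z}_p[[\Gamma]]$-submodule, where $\Gamma$ acts on $A$ by conjugation via any fixed lifts. By Lemma \ref{lmm:sigma=g} the image of $\sigma_{\boldsymbol{m}}$ in $A$ equals that of $g_{\boldsymbol{m}}$, and unwinding the construction in Section \ref{4.1} shows that for $\boldsymbol{m} \in I_0$ this common image is simply the class of the chosen lift $t_{\boldsymbol{m}}$: in the abelian group $A$ the idempotents $\epsilon_{1, m_1}$ and $\epsilon_{2, m_2}$ act as the identity on the $\Delta$-isotypic component $A^{\boldsymbol{m}}$ that contains $t_{\boldsymbol{m}}$ by construction. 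Lemma \ref{lmm:str_of_A} then gives that $\{t_{\boldsymbol{m}}\}_{\boldsymbol{m} \in I_0}$ generates $A$ as a $\mathbb{Z}_p[[\Gamma]]$-module --- with $(p,1)$ and $(1,p)$ together supplying the two generators needed for the non-cyclic component $A^{(p,1)} = A^{(1,p)}$ --- and therefore $\bar{H} = A$.

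It will then follow that the composition $H \to A \to A/pA = P/\Phi(P)$ is surjective, where $\Phi(P) = P^{p}[P,P]$ is the Frattini subgroup of the pro-$p$ group $P$. A standard Frattini argument for pro-$p$ groups forces $H = P$: if $H \subsetneq P$, then $H$ would lie in some maximal open subgroup $M \subset P$, and $M$ necessarily contains $\Phi(P)$, contradicting $H \cdot \Phi(P) = P$. Combining $H = P$ with $G = G' \cdot P$ then yields $G = G'$, proving the lemma.

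The hard part will be the middle paragraph. One must verify carefully that the double-idempotent construction $\sigma_{\boldsymbol{m}} = (t_{\boldsymbol{m}}^{(1,m_1)})^{(2,m_2)}$ produces an element whose class in $A$ is indeed the chosen generator (or one of the chosen generators in the $(p,1) = (1,p)$ case) of $A^{\boldsymbol{m}}$, so that the $\mathbb{Z}_p[[\Gamma]]$-generation statement of Lemma \ref{lmm:str_of_A} transfers to $\bar{H}$. The point is essentially formal once one remembers that on the abelian group $A$ the operator $g \mapsto g^{\epsilon_{i,m}}$ collapses to multiplication by the idempotent $\epsilon_{i, m_i}$ (no iteration is required), but it needs to be traced through explicitly.
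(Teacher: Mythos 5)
Your proposal is correct and follows essentially the same route as the paper: both arguments come down to the fact that the image of $\sigma_{\boldsymbol{m}}$ in $A=\mathrm{Gal}(\Omega/K(p^{\infty}))^{\mathrm{ab}}$ is the class of $t_{\boldsymbol{m}}$, so that Lemma \ref{lmm:str_of_A} together with the choice of the $t_{\boldsymbol{m}}$ gives generation of $A$ as a $\mathbb{Z}_{p}[[\mathrm{Gal}(K(p^{\infty})/K(p))]]$-module, and then one concludes by adjoining $\gamma_{1},\gamma_{2}$. The only difference is that you spell out, via $H=G'\cap P$ and the Frattini quotient $P/\Phi(P)$, the standard equivalence between module generation of $P^{\mathrm{ab}}$ and topological normal generation of $P$, which the paper invokes in a single sentence.
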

\begin{proof}
We use Lemma \ref{Wintenberger} and definitions of $\sigma_{\boldsymbol{m}}$. It follows that the image of $\sigma_{\boldsymbol{m}}$ in $A=\Gal(\Omega/K(p^{\infty}))^{{\rm ab}}$ generates the $\Lambda$-module $A^{\boldsymbol{m}}$ for every $\boldsymbol{m} \in I_{0} \setminus \{ (p,1), (1,p),(p-1,p-1)\}$. Moreover, the image of $\sigma_{(p,1)}$ and $\sigma_{(1,p)}$ also generates $A^{(1,1)}$. Finally, recall that $\sigma_{(p-1,p-1)}$ coincides with the commutator $[\gamma_{1}, \gamma_{2}]$ and it generates $A^{(p-1,p-1)}$.

Hence the $\Lambda$-module $A$ is generated by the image of $\{ \sigma_{\boldsymbol{m}} \}_{\boldsymbol{m} \in I_{0}}$, which is equivalent to saying that $\Gal(\Omega/K(p^{\infty}))$ is normally generated by $\{ \sigma_{\boldsymbol{m}} \}_{\boldsymbol{m} \in I_{0}}$ (as a subgroup of $G$). Hence $\gamma_{1}$, $\gamma_{2}$ and $\{ \sigma_{\boldsymbol{m}} \}_{\boldsymbol{m} \in I_{0} \setminus \{ (p-1,p-1) \}}$ generate the group $G$, as desired.
\end{proof}

Finally, we give a proof of Theorem \ref{thm:main}. 

\begin{proof}[Proof of Theorem \ref{thm:main}]
    By Lemma \ref{lmm:generator}, the group $G$ is generated by 
    \[\gamma_{1}, \gamma_{2} \quad \text{and} \quad \{ \sigma_{\boldsymbol{m}}(=g_{\boldsymbol{m}}) \}_{\boldsymbol{m} \in I_{0} \setminus \{(p-1,p-1) \}}.
    \]  Now we recall the discussion occurring at constructing $\sigma_{\boldsymbol{m}}$ and $g_{\boldsymbol{m}}$ for $\boldsymbol{m} \in I$ such that $\boldsymbol{m} \equiv \boldsymbol{1} \bmod p-1$. There were two ways to construct them, starting from $\sigma_{(p,1)}$ and $\sigma_
{(1,p)}$, but they yield the same elements on $A=\Gal(\Omega/K(p^{\infty}))^{{\rm ab}}$. Hence, by applying Corollary \ref{cor:group} to $\mathcal{F}=G$, $y_{1}=\gamma_{1}$, $y_{2}=\gamma_{2}$ and $\{ x_{i} \}_{1 \leq i \leq r}=\{ g_{\boldsymbol{m}} \}_{\boldsymbol{m} \in I_{0}}$, it follows that $F^{1}G=\Gal(\Omega/K(p^{\infty}))$ is strongly generated by $\{ g_{\boldsymbol{m} }\}_{\boldsymbol{m} \in I}$. By Lemma \ref{lmm:sigma=g}, the elements $\{ \sigma_{\boldsymbol{m}} \}_{\boldsymbol{m} \in I}$  strongly generates $F^{1}G$.

We claim that the surjective map $F^{1}G \to F^{1}G^{\ast}$ is an isomorphism, which is equivalent to the desired equality $\Omega=\Omega^{\ast}$. To prove the claim, let $\mathcal{G}$ be a free pro-$p$ group on the set $\{ \tilde{\sigma}_{\boldsymbol{m}} \}_{\boldsymbol{m} \in I}$,  $\varphi \colon \mathcal{G} \to F^{1}G$ a surjective homomorphism sending $ \tilde{\sigma}_{\boldsymbol{m}}$ to $\sigma_{\boldsymbol{m}}$ for every $\boldsymbol{m} \in I$ and $\varphi^{\ast} \colon \mathcal{G} \to F^{1}G^{\ast}$ the composite of $\varphi$ and $F^{1}G \to F^{1}G^{\ast}$. 

The group $\mathcal{G}$ comes equipped with a descending central filtration $\{F^{m}\mathcal{G} \}_{m \geq 1}$ induced by $\{ F^{m}G^{\ast} \}_{m \geq 1}$ through the map $\varphi^{\ast}$. Note that the graded Lie algebra associated to $\{F^{m}\mathcal{G} \}_{m \geq 1}$ is the same as $\mathfrak{g}$, and we have
\[
\Ker(\varphi^{\ast})=(\varphi^{\ast})^{-1}(\bigcap_{m \geq 1}F^{m}G^{\ast})= \bigcap_{m \geq 1} F^{m}\mathcal{G}. 
\] By Lemma \ref{lmm:filt} (1), the group $F^{m}\mathcal{G}$ contains $\tilde{F}^{m}\mathcal{G}$ for every $m \geq 1$.  Hence we have the following commutative diagram:
\[
\begin{tikzcd}
\left( \bigoplus_{m \geq 1} \tilde{F}^{m}\mathcal{G}/\tilde{F}^{m+1}\mathcal{G} \right) \otimes \mathbb{Q}_{p}  \arrow[r] & \left( \bigoplus_{m \geq 1} F^{m}\mathcal{G}/F^{m+1}\mathcal{G} \right) \otimes \mathbb{Q}_{p} \cong \mathfrak{g} \otimes \mathbb{Q}_{p}  \\
\bigoplus_{m \geq 1} \tilde{F}^{m}\mathcal{G}/\tilde{F}^{m+1}\mathcal{G} \arrow[r]\arrow[u, hook] & \bigoplus_{m \geq 1} F^{m}\mathcal{G}/F^{m+1}\mathcal{G} \arrow[u, hook].
\end{tikzcd}
\]
Since we assume the analogue of the Deligne-Ihara conjecture (Conjecture \ref{cnj:DI11}), it holds that $\{ \tilde{\sigma}_{\boldsymbol{m}} \}_{\boldsymbol{m} \in I}$ freely generates $\bigoplus_{m \geq 1} (F^{m}\mathcal{G}/F^{m+1}\mathcal{G})\otimes \mathbb{Q}_{p}$ as a graded Lie algebra. However, by Lemma \ref{lmm:filt} (2), the Lie algebra $\bigoplus_{m \geq 1} (\tilde{F}^{m}\mathcal{G}/\tilde{F}^{m+1}\mathcal{G})\otimes \mathbb{Q}_{p}$ is also generated by $\{ \tilde{\sigma}_{\boldsymbol{m}} \}_{\boldsymbol{m} \in I}$, which forces the upper horizontal arrow in the diagram to be an isomorphism. As a consequence, the lower horizontal arrow in the diagram is found to be injective. As is observed by induction on $m$, this injectivity is equivalent to saying that two filtrations $\{ \tilde{F}^{m}\mathcal{G} \}_{m \geq 1}$ and $\{ F^{m}\mathcal{G} \}_{m \geq 1}$ coincide with each other. Hence by Lemma \ref{lmm:filt} (3), we have
\[
\bigcap_{m \geq 1}F^{m}\mathcal{G}=\bigcap_{m \geq 1}\tilde{F}^{m}\mathcal{G}=\{ 1 \},
\] showing the injectivity of $\varphi^{\ast}$. Hence the map $\varphi^{\ast}$ is an isomorphism, so is the map $F^{1}G \to F^{1}G^{\ast}$ as desired.
\end{proof} 

Lastly, we record the following corollary:
\begin{corollary}\label{cor:integral}
    The following assertions hold.
    \begin{enumerate}
        \item The elements $\{\sigma_{\boldsymbol{m}} \}_{\boldsymbol{m} \in I}$ freely generate the integral Lie algebra $\mathfrak{g}$. In other words, under the assumption of Theorem \ref{thm:main}, the integral version of Conjecture \ref{cnj:DI11} holds.
        \item The Galois group $\Gal(\Omega/K(p^{\infty}))$ is free pro-$p$ group with $\{ \sigma_{\boldsymbol{m}} \}_{\boldsymbol{m} \in I}$ as its basis (note that this result is consistent with Proposition \ref{prp:Iwasawa} (2)). 
    \end{enumerate}
\end{corollary}
\begin{proof}
    Both assertions follow from the proof of Theorem \ref{thm:main} above, together with Lemma \ref{lmm:filt}.
\end{proof}

\begin{remark}
     The integral version of the Deligne-Ihara conjecture is related to generalized Greenberg's conjecture for $\mathbb{Q}(\mu_{p})$ \cite[Theorem 1.3]{Sh}. Roughly speaking, if the integral version of the Deligne-Ihara conjecture holds, then the image of the pro-$p$ outer Galois representation from $G_{\mathbb{Q}(\mu_{p})}$ is found to be a free pro-$p$ group of rank $\frac{p+1}{2}$. However, such a free pro-$p$ extension cannot exist if $p>2$ is irregular and generalized Greenberg's conjecture \cite[Conjecture (3.5)]{Gr01} holds, by a result of McCallum \cite[Theorem 2]{Mc}. Similarly, one can show that the integral version of Conjecture \ref{cnj:DI11} also does not hold under similar assumptions. We plan to discuss this topic in a future paper.
\end{remark}

\appendix
\section{Pure locality of $p$-th cyclotomic extension of imaginary quadratic field}\label{appB}

We keep the same notation as in the main text: Let $K$ be an imaginary quadratic field of class number one, $p$ a prime $\geq 5$ which splits in $K$ as $(p)=\p \bar{\p}$, $\mu_{p}$ the group of $p$-th roots of unity and $K_{\infty}$ the $\mathbb{Z}_{p}^{2}$-extension of $K$. In this appendix, we determine the structure of the Galois group of the maximal pro-$p$ extension $\Omega^{\mathrm{cyc}}_{K}$ of $K(\mu_{p})$ unramified outside $p$ under a certain assumption.

We denote a unique prime of $K(\mu_{p})$ lying above $\p$ (resp. $\bar{\p}$) by the same letter, and fix an arbitrary prime of $\Omega_{K}^{\mathrm{cyc}}$ lying above $\p$. We obtain a homomorphism 
\[
\phi_{\p}: G_{\mathbb{Q}_{p}(\mu_{p})}^{(p)} \to \Gal(\Omega_{K}^{\mathrm{cyc}}/K(\mu_{p})),
\] associated to the decomposition group of the fixed prime of $\Omega_{K}^{\mathrm{cyc}}$. The Galois group is said to be \emph{purely local} if $\phi_{\p}$ is an isomorphism \cite{Wi} (see also \cite[(10.9.6) Definition]{NSW}). The aim of the present appendix is to prove the following, which tells us exactly when the Galois group is purely local:

\begin{theorem}\label{thm:app}
    The Galois group $\Gal(\Omega_{K}^{\mathrm{cyc}}/K(\mu_{p}))$ is purely local if and only if $p$ does not divide the class number of $K(\mu_{p})$ and there exists a unique prime of $K_{\infty}$ above $\p$. 
\end{theorem}

\begin{remark}
    (1) The Galois group $G_{\mathbb{Q}(\mu_{p})}^{(p)}$ is a pro-$p$ Demushkin group of rank $p+1$ \cite[(7.5.11) Theorem (ii)]{NSW}. In particular, it is generated by $p+1$ elements that satisfy one relation. On the other hand, if $p$ does not divide the class number of $K(\mu_{p})$, then $\Gal(\Omega_{K}^{\mathrm{cyc}}/K(\mu_{p}))$ is also generated by $p+1$ elements that satisfy one relation \cite[(10.7.13) Theorem]{NSW}. Hence the first condition on the class number implies that both Galois groups have the same numbers of generators and relations.

    (2) The latter condition on the number of primes of $K_{\infty}$ above $\p$ fails when $K=\mathbb{Q}(\sqrt{-1})$ and $p=29789$ \cite[Example 4.16]{IS1/2}. This is the only example the author has been able to find where the latter condition does not hold for $K=\mathbb{Q}(\sqrt{-1})$. 
\end{remark}

\begin{proof}[Proof of Theorem \ref{thm:app}]
    Assume that $\phi_{\p}$ is an isomorphism. Then there exists a unique prime of $\Omega^{\mathrm{cyc}}_{K}$ lying above $\p$. Since $K_{\infty}$ is a subfield of $\Omega^{\mathrm{cyc}}_{K}$, a prime of $K_{\infty}$ lying above $\p$ is uniquely determined. Moreover, the class number of $K(\mu_{p})$ is not divisible by $p$ by \cite[Proposition 4.13]{IS1/2}.

    Conversely, assume that $p$ does not divide the class number of $K(\mu_{p})$ and there exists a unique prime of $K_{\infty}$ above $\p$. By \cite[Proposition 4.13]{IS1/2}, it suffices to show that $\phi_{\p}$ is surjective. As a first step, we consider the kernel of the restriction map
    \[
       H^{1}(\Gal(\Omega^{\mathrm{cyc}}_{K}/K(\mu_{p})), \mu_{p}) \to H^{1}(G_{\mathbb{Q}_{p}(\mu_{p})}^{(p)}, \mu_{p}) \xrightarrow{\sim} \mathbb{Q}_{p}(\mu_{p})^{\times}/p,
    \] induced by $\phi_{\p}$, where the latter isomorphism comes from Kummer theory. Since the class number of $K(\mu_{p})$ is prime to $p$, Kummer theory gives an isomorphism
    \[
    O_{K(\mu_{p})}[1/p]^{\times}/p \xrightarrow{\sim}
    H^{1}(\Gal(\Omega^{\mathrm{cyc}}_{K}/K(\mu_{p})), \mu_{p}).
    \] Hence the surjectivity of $\phi_{\p}$ is equivalent to the injectivity of the map
    \[
    \iota_{\p}:
    O_{K(\mu_{p})}[1/p]^{\times}/p \to  \mathbb{Q}_{p}(\mu_{p})^{\times}/p 
    \] induced by the $\p$-adic completion. The map $\iota_{\p}$ is compatible with the action ofthe decomposition group of $\Gal(K(\mu_{p})/K)$ at $\p$, which is nothing but $\Gal(K(\mu_{p})/K)$ itself. Let
    \[
    \iota_{\bar{\p}}:
    O_{K(\mu_{p})}[1/p]^{\times}/p \to  \mathbb{Q}_{p}(\mu_{p})^{\times}/p
    \] be the map induced by the $\bar{\p}$-adic completion. Then the map
    \[
    (\iota_{\p}, \iota_{\bar{\p}}):
        O_{K(\mu_{p})}[1/p]^{\times}/p \to  \mathbb{Q}_{p}(\mu_{p})^{\times}/p \times \mathbb{Q}_{p}(\mu_{p})^{\times}/p. 
    \] is injective, since the dimension of the kernel of $(\iota_{\p}, \iota_{\bar{\p}})$ coincides with that of the $p$-class group of $K(\mu_{p})$ modulo $p$ (see \cite[Proof of (10.7.3)]{NSW}, for example). On the other hand, this injectivity already implies that the kernel of $\iota_{\p}$ is at most a single Tate twist $\mathbb{F}_{p}(1)$: More precisely, we claim that the composite
    \[
    \iota_{\p^{+}}:
     O_{K(\mu_{p})^{+}}[1/p]^{\times}/p \to O_{K(\mu_{p})}[1/p]^{\times}/p \xrightarrow{\iota_{\p}}  \mathbb{Q}_{p}(\mu_{p})^{\times}/p.
    \] is injective, where $K(\mu_{p})^{+}$ denotes the maximal real subfield of $K(\mu_{p})$. Note that, by Dirichlet's unit theorem, we have
    \[
    \dim_{\mathbb{F}_{p}}O_{K(\mu_{p})^{+}}[1/p]^{\times}/p=p
    \quad \text{and} \quad
    \dim_{\mathbb{F}_{p}} O_{K(\mu_{p})}[1/p]^{\times}/p=p+1,
    \] and the cokernel of the inclusion 
    $ O_{K(\mu_{p})^{+}}[1/p]^{\times}/p \to O_{K(\mu_{p})}[1/p]^{\times}/p
    $ is generated by $\mu_{p} \cong \mathbb{F}_{p}(1)$. The injectivity of $\iota_{\p^{+}}$ follows from the that of the map $(\iota_{\p}, \iota_{\bar{\p}})$ and the fact that a prime of $K(\mu_{p})^{+}$ lying above $p$ is unique, since the restriction of $(\iota_{\p}, \iota_{\bar{\p}})$ on $O_{K(\mu_{p})^{+}}[1/p]^{\times}/p$ is regarded as the diagonal map induced by the completion at the unique prime $K(\mu_{p})^{+}$ lying above $p$.

    Now it is easy to check by using \cite[(8.7.2) Proposition]{NSW} that
    \[
    O_{K(\mu_{p})}[1/p]^{\times}/p \cong \mathbb{F}_{p}^{ \oplus 2} \oplus \mathbb{F}_{p}(1)^{\oplus 2} \oplus \bigoplus_{m=2}^{p-2} \mathbb{F}_{p}(m)
    \] as an $\mathbb{F}_{p}[\Gal(K(\mu_{p})/K)]$-module. Since $\Ker(\iota_{\p})$ is stable under the action of $\Gal(K(\mu_{p})/K)$, the injectivity of $\iota_{\p^{+}}$ implies that $\Ker(\iota_{\p})$ is contained in $\mathbb{F}_{p}(1)^{\oplus 2}$. In terms of Galois groups, we have proved that the cokernel of 
    \[
    G_{\mathbb{Q}_{p}(\mu_{p})}^{(p), \ab}/p \to \Gal(\Omega_{K}^{\mathrm{cyc}}/K(\mu_{p}))^{\ab}/p
    \] induced by $\phi_{\p}$ is contained in $\mathbb{F}_{p}^{2}$ (with trivial $\Gal(K(\mu_{p})/K)$-action). However, since this $\mathbb{F}_{p}^{2}$-extension is given by the modulo-$p$ quotient
    \[ 
    \Gal(K_{\infty}K(\mu_{p})/K(\mu_{p})) \cong \mathbb{Z}_{p}^{2} \to \mathbb{F}_{p}^{2}
    \] and we have assumed that there is a unique prime of $K_{\infty}K(\mu_{p})$ lying above $\p$, the map $\phi_{\p}$ is surjective.
\end{proof}

Theorem \ref{thm:app} allows us to use the local Tate duality to compute cohomology groups of $\Gal(\Omega^{\mathrm{cyc}}_{K}/K(\mu_{p}))$. Consequently, we obtain the following corollary:

\begin{corollary}\label{cor:app1}
    We keep the same assumption as in Theorem \ref{thm:app}. Then the cohomology group $H^{2}_{\et}(O_{K}[1/p], \mathbb{Z}_{p}(m_{1},m_{2}))$ is finite for every $(m_{1}, m_{2}) \in I$ such that $m_{1} \equiv m_{2} \bmod p-1$. Consequently, the $(m_{1},m_{2})$-th elliptic Soul\'e character $\kappa_{(m_{1}, m_{2})}$ is nontrivial for every such $(m_{1}, m_{2}) \in I$.
\end{corollary}

\begin{proof}
    The assertion follows from Theorem \ref{thm:app} and Theorem \ref{thm:1}.
\end{proof}

\bibliographystyle{amsalpha}
\bibliography{References}                                        

\end{document}